\providecommand{\U}[1]{\protect\rule{.1in}{.1in}}
\newtheorem{theorem}{Theorem}
\newtheorem{corollary}[theorem]{Corollary}
\newtheorem{definition}[theorem]{Definition}
\newtheorem{example}[theorem]{Example}
\newtheorem{lemma}[theorem]{Lemma}
\newtheorem{proposition}[theorem]{Proposition}
\newtheorem{remark}[theorem]{Remark}
\newenvironment{proof}[1][Proof]{\noindent\textbf{#1.} }{\ \rule{0.5em}{0.5em}}
\begin{document}

\title{On the Definitions of Fractional Sum and Difference on Non-uniform Lattices}
\author{Jinfa Cheng}
\date{{\small School of Mathematical Sciences, Xiamen University, }\\
{\small Xiamen, Fujian, 361005, P. R. China}\\
{\small E-mail: jfcheng@xmu.edu.cn}}
\maketitle

\begin{abstract}
As is well known, the idea of a fractional sum and difference on uniform
lattice is more current, and gets a lot of development in this field. But the
definitions of fractional sum and fractional difference of $f(z)$ on
non-uniform lattices $x(z)=c_{1}z^{2}+c_{2}z+c_{3}$ or $x(z)=c_{1}q^{z}%
+c_{2}q^{-z}+c_{3}$ seem much more difficult and complicated. In this article,
for the first time we propose the definitions of the fractional sum and
fractional difference on non-uniform lattices by two different ways. The
analogue of Euler's Beta formula, Cauchy' Beta formula on on non-uniform
lattices are established, and some fundamental theorems of fractional
calculas, the solution of the generalized Abel equation and fractional central
difference equations on non-uniform lattices are obtained etc.

\end{abstract}

\bigskip Keywords: Difference equation of hypergeometric type; Non-uniform
lattice; Fractional sum; Fractional difference; Special functions

MSC 2010: 33C45, 33D45, 26A33, 34K37

\section{\bigskip Intrduction}

\bigskip The definition of non-uniform lattices date back to the approximation
of the following differential equation of hypergeometric type:%
\begin{equation}
\sigma(z)y^{\prime\prime}(z)+\tau(z)y^{\prime}(z)+\lambda
y(z)=0,\label{hytype}%
\end{equation}
where $\sigma(z)$ and $\tau(z)$ are polynomials of degrees at most two and
one, respectively, and $\lambda$ is a constant. Its solutions are some types
of special functions of mathematical physics, such as the classical orthogonal
polynomials, the hypergeometric and cylindrical functions, see G. E. Andrews,
R. Askey, R. Roy \cite{andrews1999} and Z. X. Wang \cite{wang1989}. A. F.
Nikiforov, V. B. Uvarov and S. K. Suslov \cite{nikiforov1991, nikiforov1988}
generalized Eq. (\ref{hytype}) to a difference equation of hypergeometric type
case and studied the Nikiforov-Uvarov-Suslov difference equation on a lattice
$x(s)$ with variable step size $\Delta x(s)=x(s+1)-x(s),\nabla
x(s)=x(s)-x(s-1)$ as%
\begin{equation}
\widetilde{\sigma}[x(s)]\frac{\Delta}{\Delta x(s-1/2)}\left[  \frac{\nabla
y(s)}{\nabla x(s)}\right]  +\frac{1}{2}\widetilde{\tau}[x(s)]\left[
\frac{\Delta y(s)}{\Delta x(s)}+\frac{\nabla y(s)}{\nabla x(s)}\right]
+\lambda y(s)=0,\label{NUSeq}%
\end{equation}
where $\widetilde{\sigma}(x)$ and $\widetilde{\tau}(x)$ are polynomials of
degrees at most two and one in $x(s),$ respectively, $\lambda$ is a constant,
$\Delta y(s)=y(s+1)-y(s),$ $\nabla y(s)=y(s)-y(s-1),$ and $x(s)$ is a lattice
function\ that satisfies
\begin{equation}
\frac{x(s+1)+x(s)}{2}=\alpha x(s+\frac{1}{2})+\beta,\text{\hspace{0.1in}%
}\alpha,\beta\text{ are constants,}\label{condition1}%
\end{equation}%
\begin{equation}
x^{2}(s+1)+x^{2}(s)\text{ is a polynomial of degree at most two w.r.t.
}x(s+\frac{1}{2}).\label{condition2}%
\end{equation}
It should be pointed out that the difference equation (\ref{NUSeq}) obtained
as a result of approximating the differential equation (\ref{hytype}) on a
non-uniform lattice is of independent importance and arises in a number of
other questions. Its solutions essentially generalized the solutions of the
original differential equation and are of interest in their own right
\cite{askey1979,askey1984,askey1985}.

\begin{definition}
(\cite{nikiforov1991,nikiforov1988})\label{def1} Two kinds of lattice
functions $x(s)$ are called \emph{non-uniform lattices} which satisfy the
conditions in Eqs. (\ref{condition1}) and (\ref{condition2}) are
\begin{equation}
x(s)=\widetilde{c}_{1}s^{2}+\widetilde{c}_{2}s+\widetilde{c}_{3},\label{non2}%
\end{equation}%
\begin{equation}
x(s)=c_{1}q^{s}+c_{2}q^{-s}+c_{3},\label{non1}%
\end{equation}
where $c_{i},\widetilde{c}_{i}$ are arbitrary constants and $c_{1}c_{2}\neq0$,
$\widetilde{c}_{1}\widetilde{c}_{2}\neq0$. When $c_{1}=1,c_{2}=c_{3}=0,$ $or$
$\widetilde{c}_{2}=1,\widetilde{c}_{1}=\widetilde{c}_{3}=0,$ these two kinds
of lattice functions $x(s)$%
\begin{equation}
x(s)=s\label{uni2}%
\end{equation}%
\begin{equation}
x(s)=q^{s},\label{uni1}%
\end{equation}
are called \emph{uniform lattices.}
\end{definition}

Let $x(s)$ be a non-uniform lattice, where $s\in\mathbb{C}$. For any real
$\gamma$, $x_{\gamma}(s)=x(s+\frac{\gamma}{2})$ is also a non-uniform lattice.
Given a function $F(s)$, define the difference operator with respect to
$x_{\gamma}(s)$ as%

\[
\nabla_{\gamma}F(s)=\frac{\nabla F(s)}{\nabla x_{\gamma}(s)},
\]
and%
\[
\nabla_{\gamma}^{k}F(z)=\frac{\nabla}{\nabla x_{\gamma}(z)}(\frac{\nabla
}{\nabla x_{\gamma+1}(z)}...(\frac{\nabla F(z)}{\nabla x_{\gamma+k-1}%
(z)})).(k=1,2,...)
\]

The following equalities can be verified straightforwardly.

\begin{proposition}
\label{pro}\textit{Given two functions} $f(s),g(s)$ \textit{with complex
variable} $s$,\textit{\ the following difference equalities hold}%
\begin{align*}
\Delta_{\nu}(f(s)g(s))  &  =f(s+1)\Delta_{\nu}g(s)+g(s)\Delta_{\nu}f(s)\\
&  =g(s+1)\Delta_{\nu}f(s)+f(s)\Delta_{\nu}g(s),
\end{align*}%
\begin{align*}
\Delta_{\nu}\left(  \frac{f(s)}{g(s)}\right)   &  =\frac{g(s+1)\Delta_{\nu
}f(s)-f(s+1)\Delta_{\nu}g(s)}{g(s)g(s+1)}\\
&  =\frac{g(s)\Delta_{\nu}f(s)-f(s)\Delta_{\nu}g(s)}{g(s)g(s+1)},
\end{align*}%
\begin{align}
\nabla_{\nu}(f(s)g(s))  &  =f(s-1)\nabla_{\nu}g(s)+g(s)\nabla_{\nu
}f(s)\nonumber\\
&  =g(s-1)\nabla_{\nu}f(s)+f(s)\nabla_{\nu}g(s), \label{Leibnz}%
\end{align}%
\begin{align*}
\nabla_{\nu}\left(  \frac{f(s)}{g(s)}\right)   &  =\frac{g(s-1)\nabla_{\nu
}f(s)-f(s-1)\nabla_{\nu}g(s)}{g(s)g(s-1)}\\
&  =\frac{g(s)\nabla_{\nu}f(s)-f(s)\nabla_{\nu}g(s)}{g(s)g(s-1)}.
\end{align*}

\end{proposition}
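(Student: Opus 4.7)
The plan is to verify each identity by direct computation from the primitive definitions $\nabla F(s)=F(s)-F(s-1)$, $\Delta F(s)=F(s+1)-F(s)$, together with $\nabla_\nu F(s)=\nabla F(s)/\nabla x_\nu(s)$ and $\Delta_\nu F(s)=\Delta F(s)/\Delta x_\nu(s)$. Since the four assertions all have the same flavor, I would prove the backward Leibniz rule (\ref{Leibnz}) in full and indicate that the remaining three follow by identical manipulations.

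The key step is the classical add-and-subtract trick. For the product rule, I would start from $\nabla(f(s)g(s))=f(s)g(s)-f(s-1)g(s-1)$ and insert $\pm f(s-1)g(s)$ inside the right-hand side, obtaining $g(s)[f(s)-f(s-1)]+f(s-1)[g(s)-g(s-1)]=g(s)\nabla f(s)+f(s-1)\nabla g(s)$. Dividing both sides by $\nabla x_\nu(s)$, which is independent of $f$ and $g$, delivers the first form in (\ref{Leibnz}). Inserting instead $\pm f(s)g(s-1)$ in the same expression yields the second form. The forward product rule is obtained by replacing $s-1$ with $s+1$ and using $\Delta$ in place of $\nabla$.

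For the quotient identities, I would clear denominators at the outset: $\frac{f(s)}{g(s)}-\frac{f(s-1)}{g(s-1)}=\frac{f(s)g(s-1)-f(s-1)g(s)}{g(s)g(s-1)}$. Adding and subtracting $f(s-1)g(s-1)$ in the numerator regroups it as $g(s-1)\nabla f(s)-f(s-1)\nabla g(s)$, while adding and subtracting $f(s)g(s)$ instead produces $g(s)\nabla f(s)-f(s)\nabla g(s)$. Dividing by $\nabla x_\nu(s)$ completes both forms of the backward quotient rule, and the $\Delta_\nu$ version is the mirror computation.

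I do not anticipate any real obstacle: the proposition is essentially the standard Leibniz and quotient rules for forward and backward differences, and the lattice-dependent factors $\Delta x_\nu(s)$, $\nabla x_\nu(s)$ play a purely passive role because they do not depend on $f$ or $g$. The only point requiring mild care is bookkeeping which of $s-1$, $s$, $s+1$ appears in each factor after the add-and-subtract splitting, and this is precisely what produces the two equivalent right-hand sides listed in each identity.
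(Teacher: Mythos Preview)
Your proposal is correct and matches what the paper has in mind: the paper does not give an explicit proof of this proposition at all, merely remarking that ``the following equalities can be verified straightforwardly.'' Your add-and-subtract computation is exactly that straightforward verification, and there is nothing more to it.
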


The notions of fractional calculus date back to Euler, and in the last decades
the fractional calculas had a remarkable development as shown by many
mathematical volumes dedicated to it
\cite{oldham1974,miller1993,samko1993,kiryakova1994,podlubny1999,diethelm2002,kilbas2006}%
, but the idea of a fractional difference on uniform lattice (\ref{uni2}) and
(\ref{uni1}) is more current. $\ $

Some of the more extensive papers on the fractional difference on uniform
lattice (\ref{uni2}), Diaz and Osler \cite{diaz1974}, Granger and Joyeux
\cite{granger1980}, Hosking \cite{hosking1981} have employed the definition of
the $\alpha$-th order fractional difference by

\bigskip%
\begin{equation}
\nabla^{\alpha}f(x)=\sum_{k=0}^{\infty}(-1)^{k}\left(
\begin{array}
[c]{c}%
\alpha\\
k
\end{array}
\right)  f(x-k),
\end{equation}
where $\alpha$ is any real number and the notation $\nabla^{\alpha}$ is used
since this definition is natural extension of the backward difference operator.

H. H. Gray and N. F. Zhang \cite{gray1988} gave the following new definition
of the fractional sum and difference:

\begin{definition}
(\cite{gray1988})\label{def3} For $\alpha$ any complex number, and $f$ defined
over the integer set $\{a,a+1,...,x\}$, the $\alpha$-th order sum over
$\{a,a+1,...,x\}$ is defined by%
\begin{equation}
S_{a}^{\alpha}f(x)=\frac{1}{\Gamma(\alpha)}\sum_{k=a}^{x}(x-k+1)_{\alpha
-1}f(k).
\end{equation}

\end{definition}

For any complex number $\alpha$ and $\beta$ let $(\alpha)_{\beta\text{ }}$be
difined as follows:%

\[
(\alpha)_{\beta}=\left\{
\begin{array}
[c]{c}%
\frac{\Gamma(\alpha+\beta)}{\Gamma(\alpha)},\text{ \ \ \ when }\alpha\text{
and }\beta\text{ are neither zero nor negative integers,}\\
1,\text{
\ \ \ \ \ \ \ \ \ \ \ \ \ \ \ \ \ \ \ \ \ \ \ \ \ \ \ \ \ \ \ \ \ \ \ \ \ \ when
}\alpha=\beta=0,\\
0,\text{ \ \ \ \ \ \ \ \ \ \ \ when }\alpha=0,\beta\text{ is not zero or a
negative integer}\\
\text{undefined
\ \ \ \ \ \ \ \ \ \ \ \ \ \ \ \ \ \ \ \ \ \ \ \ \ \ \ \ \ \ \ \ \ \ \ \ \ \ \ \ \ \ \ otherwise}%
\end{array}
\right.
\]
and when $n\in N,(\alpha)_{n}=a(a+1)...(a+n-1)$ denotes the Pochhammer symbol.

\begin{definition}
(\cite{gray1988})\label{def4} For $\alpha$ any complex number, the $\alpha
$th-order difference over $\{a,a+1,...,x\}$ is defined by%
\begin{equation}
\nabla_{a}^{\alpha}f(x)=S_{a}^{-\alpha}f(x).
\end{equation}

\end{definition}

J. F. Cheng \cite{jinfa2011} independently gave the following definitions of
the fractional sum and difference, which are consistent with Definition
\ref{def3} and Definition \ref{def4}, and are well defined for any real or
complex number $\alpha.$

\begin{definition}
(\cite{jinfa2011})\label{def5} For $\alpha$ complex number, $Re \alpha>0$ and
$f$ defined over the integer set $\{a,a+1,...,x\}$, the $\alpha$-th order sum
over $\{a,a+1,...,x\}$ is defined by%
\begin{equation}
\nabla_{a}^{-\alpha}f(x)=\sum_{k=a}^{x}\left[
\begin{array}
[c]{c}%
\alpha\\
x-k
\end{array}
\right]  f(k),
\end{equation}
where $\left[
\begin{array}
[c]{c}%
\alpha\\
n
\end{array}
\right]  =\frac{\alpha(\alpha+1)...(\alpha+n-1)}{n!}.$
\end{definition}

\begin{definition}
(\cite{jinfa2011})\label{def6} For $\alpha$ any complex number, $n-1\leq
\operatorname{Re}\alpha<n$, the Riemann-Liouville type $\alpha$-th order
difference over $\{a,a+1,...,x\}$ is defined by%
\begin{equation}
\nabla_{a}^{\alpha}f(x)=\nabla^{n}\nabla_{a}^{\alpha-n}f(x),
\end{equation}
and Caputo type $\alpha$th-order difference over $\{a,a+1,...,x\}$ is defined by
\end{definition}

\begin{equation}
\nabla_{a}^{\alpha}f(x)=\nabla_{a}^{\alpha-n}\nabla^{n}f(x).
\end{equation}

In the case of uniform lattice uniform (\ref{uni1}), a q-analogue of the
Riemann-Liouville fractional sum operator is introduced in \cite{salam1966} by
Al-Salam through%

\begin{equation}
I_{q}^{\alpha}f(x)=\frac{x^{\alpha-1}}{\Gamma_{q}(\alpha)}\int_{0}%
^{x}(qt/x;q)_{\alpha-1}f(t)d_{q}(t).
\end{equation}

The $q$-analogue of the Riemann-Liouville fractional difference operator is
also given independently by Agarwal \cite{agarwal1969}, who defined the
q-fractional difference to be%

\begin{equation}
D_{q}^{\alpha}f(x)=I_{q}^{-\alpha}f(x)=\frac{x^{-\alpha-1}}{\Gamma_{q}%
(-\alpha)}\int_{0}^{x}(qt/x;q)_{-\alpha-1}f(t)d_{q}(t).
\end{equation}

Althought the discrete fractional calculus on uniform lattice (\ref{uni2}) and
(\ref{uni1}) are more current, but great development has been made in this
field \cite{atici2009,eloe2009,anastassiou2010,torres2011,baoguo2016}. In the
recent monographs, J. F. Cheng \cite{jinfa2011}, C. Goodrich and A. Peterson
\cite{goodrich2015} provided the comprehensive treatment of the discrete
fractional calculus with up-to-date references, and the developments in the
theory of fractional $q$-calculas had been well reported by M. H. Annaby and
Z. S. Mansour \cite{annaby2012}.

But we should mention that, in the case of nonuniform lattices (\ref{non2}) or
(\ref{non1}), even when $n\in N,$ the fomula of $n-$order difference on
non-uniform lattices is a remarkable job, since it is very complicated and
difficult to be obtained. In fact, in \cite{nikiforov1991,nikiforov1988}, A.
Nikiforov, V. Uvarov, S. Suslov obtained the formula of $n-th$ difference
$\nabla_{1}^{(n)}[f(s)]$ as follows:

\begin{definition}
(\cite{nikiforov1991,nikiforov1988})\label{def7} Let $n\in N^{+},$ for
nonuniform lattices (\ref{non2}) or (\ref{non1}), then%
\begin{align}
\nabla_{1}^{(n)}[f(s)]  &  =\sum_{k=0}^{n}\frac{(-1)^{n-k}[\Gamma(n+1)]_{q}%
}{[\Gamma(k+1)]_{q}[\Gamma(n-k+1)]_{q}}\nonumber\\
&  \times%
%TCIMACRO{\dprod \limits_{l=0}^{n}}%
%BeginExpansion
{\displaystyle\prod\limits_{l=0}^{n}}
%EndExpansion
\frac{\nabla x[s+k-(n-1)/2]}{\nabla x[s+(k-l+1)/2]}f(s-n+k)\nonumber\\
&  =\sum_{k=0}^{n}\frac{(-1)^{n-k}[\Gamma(n+1)]_{q}}{[\Gamma(k+1)]_{q}%
[\Gamma(n-k+1)]_{q}}\nonumber\\
&  \times%
%TCIMACRO{\dprod \limits_{l=0}^{n}}%
%BeginExpansion
{\displaystyle\prod\limits_{l=0}^{n}}
%EndExpansion
\frac{\nabla x_{n+1}(s-k)}{\nabla x[s+(n-k-l+1)/2]}f(s-k),
\end{align}
where $[\Gamma(s)]_{q}$ is modified $q-$gamma function which is defined as%
\[
\lbrack\Gamma(s)]_{q}=q^{-(s-1)(s-2)/4}\Gamma_{q}(s),
\]
and function $\Gamma_{q}(s)$ is called the $q-$gamma function; it is a
generalization of Euler's gamma function $\Gamma(s)$. It is defined by
\end{definition}

\begin{equation}
\Gamma_{q}(s)=\left\{
\begin{array}
[c]{c}%
\frac{\Pi_{k=0}^{\infty}(1-q^{k+1})}{(1-q)^{s-1}\Pi_{k=0}^{\infty}(1-q^{s+k}%
)},\text{ \ \ \ when }|q|<1;\\
q^{-(s-1)(s-2)/2}\Gamma_{1/q}(s),\text{ \ \ \ \ \ \ \ \ \ \ \ when }|q|<1.
\end{array}
\right.  \label{gammaq}%
\end{equation}

After further transformations, A. Nikiforov, V. Uvarov, S. Suslov in
\cite{nikiforov1991} rewritted the formula of $n-th$ difference $\nabla
_{1}^{(n)}[f(s)]$ as follows:

\begin{definition}
(\cite{nikiforov1991})\label{def8} Let $n\in N^{+},$ for nonuniform lattices
(\ref{non2}) or (\ref{non1}), then%
\[
\nabla_{1}^{(n)}[f(s)]=\sum_{k=0}^{n}\frac{([-n]_{q})_{k}}{[k]_{q}!}%
\frac{[\Gamma(2s-k+c)]_{q}}{[\Gamma(2s-k+n+1+c)]_{q}}f(s-k)\nabla
x_{n+1}(s-k),
\]
where%
\begin{equation}
\lbrack\mu]_{q}=%
%TCIMACRO{\QATOPD{\{}{.}{\frac{q^{\frac{\mu}{2}}-q^{-\frac{\mu}{2}}}%
%{q^{\frac{1}{2}}-q^{-\frac{1}{2}}},\text{if }x(s)=c_{1}q^{s}+c_{2}q^{-s}%
%+c_{3};}{\mu,\text{if }x(s)=\widetilde{c}_{1}s^{2}+\widetilde{c}%
%_{2}s+\widetilde{c}_{3},} }%
%BeginExpansion
\genfrac{\{}{.}{0pt}{}{\frac{q^{\frac{\mu}{2}}-q^{-\frac{\mu}{2}}}{q^{\frac
{1}{2}}-q^{-\frac{1}{2}}},\text{if }x(s)=c_{1}q^{s}+c_{2}q^{-s}+c_{3}%
;}{\mu,\text{if }x(s)=\widetilde{c}_{1}s^{2}+\widetilde{c}_{2}s+\widetilde{c}%
_{3},}
%EndExpansion
\label{muq}%
\end{equation}
and
\[
c=\left\{
\begin{array}
[c]{c}%
\frac{\log\frac{c_{2}}{c_{1}}}{\log q},\text{ \ \ \ when }x(s)=c_{1}%
q^{s}+c_{2}q^{-s}+c_{3},\\
\frac{\widetilde{c_{2}}}{\widetilde{c_{1}}},\text{ \ \ \ \ \ \ \ \ \ \ \ when
}x(s)=\widetilde{c}_{1}s^{2}+\widetilde{c}_{2}s+\widetilde{c}_{3}.
\end{array}
\right.
\]

\end{definition}

Now there exist two important and challenging problems that need to be further discussed:

(1) Assume that $g(s)$ be a known function, $f(s)$ be an unknown function,
which satisfies the following generalized difference equation on non-uniform lattices%

\begin{equation}
\nabla_{1}^{(n)}[f(s)]=g(s), \label{unidiff}%
\end{equation}
how to solve generalized difference equation (\ref{unidiff})?

(2) However, the related definitions of $\alpha-$order fractional sum and
$\alpha-$order fractional difference on non-uniform lattices are very
difficult and interesting problems, they have not been appeared since the
monographs \cite{nikiforov1991,nikiforov1988} were published. Can we give
reasonable definitions of fractional sum and difference on non-uniform Lattices?

We think that as the most general discrete fractional calculus on non-uniform
Lattices, they will have an independent meaning and lead to many interesting
new theories about them. They are the important extension and development of
\textbf{Definition} \ref{def7}, \ref{def8} and the discrete fractional calculus.

It is the purpose of this paper to inquires into the feasibility of
establishing a fractional calculus of finite difference on nonuniform
lattices. In this article, for the first time we propose the definitions of
the fractional sum and fractional difference on non-uniform lattices. Then
give some fundamental theorems of fractional calculas, such as the analogue of
Euler's Beta formula, Cauchy' Beta formula, Taylor's formula on non-uniform
lattices are established, and the solution of the generalized Abel equation
and fractional central difference equations on non-uniform lattices are
obtained etc. The resuts we obtain are essentially new and appeared in the
literature for the first time only recently.

\section{Integer sum and Fractional Sum on Non-uniform Lattices}

Let $x(s)$ be a non-uniform lattice, where $s\in\mathbb{C}$. let
$\nabla_{\gamma}F(s)=f(s).$ Then
\[
F(s)-F(s-1)=f(s)\left[  x_{\gamma}(s)-x_{\gamma}(s-1)\right]
\]
Choose $z,a\in\mathbb{C}$, and $z-a\in N$. Summing from $s=a+1$ to $z$, we
have
\[
F(z)-F(a)=\sum_{s=a+1}^{z}f(s)\nabla x_{\gamma}(s).
\]
Thus, we define
\[
\int_{a+1}^{z}f(s)d_{\nabla}x_{\gamma}(s)=\sum_{s=a+1}^{z}f(s)\nabla
x_{\gamma}(s).
\]
It is easy to verify that

\begin{proposition}
\label{integration}\mbox{} Given two function $F(z),f(z)$ with complex
variable $z,a\in C$, and $z-a\in N$, we have \begin{flalign*}
&(1)\nabla_{\gamma}\left[\int_{a+1}^{z}f(s)d_\nabla x_{\gamma}(s)\right]=f(z),&&\\
&(2)\int_{a+1}^{z}\nabla_{\gamma}F(s)d_\nabla x_{\gamma}(s)=F(z)-F(a).&&\\
\end{flalign*}

\end{proposition}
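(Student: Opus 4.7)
The plan is to prove both parts directly from the definitions given just before the statement, namely $\nabla_{\gamma}F(s)=[F(s)-F(s-1)]/\nabla x_{\gamma}(s)$ and $\int_{a+1}^{z}f(s)\,d_{\nabla}x_{\gamma}(s)=\sum_{s=a+1}^{z}f(s)\nabla x_{\gamma}(s)$. Nothing beyond elementary algebra and a telescoping argument should be required; there is no genuine obstacle, and the main thing to be careful about is that $z-a\in\mathbb{N}$ so the sums are finite and the index manipulations are legal.

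For part (1), I would set $G(z):=\sum_{s=a+1}^{z}f(s)\nabla x_{\gamma}(s)$ and compute $G(z)-G(z-1)$ by splitting off the top term of the sum: this gives $G(z)-G(z-1)=f(z)\nabla x_{\gamma}(z)$. Dividing by $\nabla x_{\gamma}(z)$, which is nonzero on a non-uniform lattice, yields
\[
\nabla_{\gamma}G(z)=\frac{G(z)-G(z-1)}{\nabla x_{\gamma}(z)}=f(z),
\]
which is precisely the claim.

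For part (2), I would substitute the definition of $\nabla_{\gamma}F(s)$ directly inside the sum:
\[
\int_{a+1}^{z}\nabla_{\gamma}F(s)\,d_{\nabla}x_{\gamma}(s)=\sum_{s=a+1}^{z}\frac{F(s)-F(s-1)}{\nabla x_{\gamma}(s)}\,\nabla x_{\gamma}(s)=\sum_{s=a+1}^{z}\bigl[F(s)-F(s-1)\bigr].
\]
The factor $\nabla x_{\gamma}(s)$ cancels, and the remaining sum telescopes to $F(z)-F(a)$, which finishes the proof.

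In short, both statements are discrete analogues of the fundamental theorem of calculus, and each follows from a one-line manipulation: a first-difference computation on a partial sum for (1), and a cancellation followed by a telescoping sum for (2). The only subtlety worth flagging in the write-up is that the lattice steps $\nabla x_{\gamma}(s)$ are nonvanishing on the range of summation, which is automatic for the non-uniform lattices $x(s)=\widetilde{c}_{1}s^{2}+\widetilde{c}_{2}s+\widetilde{c}_{3}$ and $x(s)=c_{1}q^{s}+c_{2}q^{-s}+c_{3}$ outside of isolated values of $s$, and can be taken as a standing assumption on the domain.
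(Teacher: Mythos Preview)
Your proof is correct and is exactly the direct verification the paper has in mind; indeed, the paper does not spell out a proof at all, simply stating that the proposition ``is easy to verify,'' and your telescoping argument for (2) and first-difference computation for (1) are the natural way to do that verification.
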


A generalized power ${[x(s)-x(z)]^{(n)}}$ on nonuniform lattice is given by%
\[
{[x(s)-x(z)]^{(n)}=}%
%TCIMACRO{\dprod \nolimits_{k=0}^{n-1}}%
%BeginExpansion
{\displaystyle\prod\nolimits_{k=0}^{n-1}}
%EndExpansion
{[x(s)-x(z-k)],(n\in N}^{+}),
\]
and a more formal definition and further properties of the generalized powers
${[x_{\nu}(s)-x_{\nu}(z)]^{(\alpha)}}$ on nonuniform lattice are very
important, which are defined as follows:

\begin{definition}
\textbf{ \label{power}(}See\textbf{ }%
\cite{atakishiyev19882,suslov1992,suslov1989}) Let $\alpha\in C,$ the
generalized powers ${[x_{\nu}(s)-x_{\nu}(z)]^{(\alpha)}}$ are defined by%
\begin{equation}
{[x_{\nu}(s)-x_{\nu}(z)]^{(\alpha)}}=\left\{
\begin{array}
[c]{c}%
\frac{\Gamma(s-z+\alpha)}{\Gamma(s-z)},\text{ \ \ \ \ \ \ if }%
x(s)=s,\text{\ \ \ \ \ \ \ \ \ \ \ \ \ \ \ \ \ \ \ \ \ \ \ \ \ \ \ \ \ \ \ \ \ \ \ \ \ \ \ \ \ \ \ \ }%
\\
\frac{\Gamma(s-z+\alpha)\Gamma(s+z+\nu+1)}{\Gamma(s-z)\Gamma(s+z+\nu
-\alpha+1)},\text{\ if\ }x(s)=s^{2}%
,\text{\ \ \ \ \ \ \ \ \ \ \ \ \ \ \ \ \ \ \ \ \ \ \ \ \ \ \ \ \ \ \ \ \ \ \ \ \ \ }%
\\
(q-1)^{\alpha}q^{\alpha(\nu-\alpha+1)/2}\frac{\Gamma_{q}(s-z+\alpha)}%
{\Gamma_{q}(s-z)},\text{ if }x(s)=q^{s}%
,\text{\ \ \ \ \ \ \ \ \ \ \ \ \ \ \ \ \ \ \ \ \ \ \ \ \ \ \ \ \ \ \ \ \ \ \ \ \ \ \ \ \ \ \ \ }%
\\
\frac{1}{2^{\alpha}}(q-1)^{2\alpha}q^{-\alpha(s+\frac{\nu}{2})}\frac
{\Gamma_{q}(s-z+\alpha)\Gamma_{q}(s+z+\nu+1)}{\Gamma_{q}(s-z)\Gamma
_{q}(s+z+\nu-\alpha+1)},\text{\ if\ }x(s)=\frac{q^{s}+q^{-s}}{2}%
\text{.\ \ \ \ \ \ \ \ \ \ \ \ \ \ \ \ \ \ \ \ \ \ \ \ \ \ \ \ \ \ \ }%
\end{array}
\right.
\end{equation}

For the quadratic lattices of the form (\ref{non1}), with $c=\frac
{\widetilde{c_{2}}}{\widetilde{c_{1}}},$%
\begin{equation}
{[x_{\nu}(s)-x_{\nu}(z)]^{(\alpha)}=}\widetilde{c_{1}}^{\alpha}\frac
{\Gamma(s-z+\alpha)\Gamma(s+z+\nu+c+1)}{\Gamma(s-z)\Gamma(s+z+\nu-\alpha
+c+1)};
\end{equation}

For the $q-$quadratic lattices of the form (\ref{non2}), with $c=\frac
{\log\frac{c_{2}}{c_{1}}}{\log q},$%
\begin{equation}
{[x_{\nu}(s)-x_{\nu}(z)]^{(\alpha)}=[}c_{1}(1-q)^{2}]^{\alpha}q^{-\alpha
(s+\frac{\nu}{2})}\frac{\Gamma_{q}(s-z+\alpha)\Gamma_{q}(s+z+\nu+c+1)}%
{\Gamma_{q}(s-z)\Gamma_{q}(s+z+\nu-\alpha+c+1)},
\end{equation}
where $\Gamma(s)$ is Euler Gamma function and $\Gamma_{q}(s)$ is Euler
$q-$Gamma function which is defined as (\ref{gammaq}).
\end{definition}

\begin{proposition}
\label{properties}\cite{atakishiyev19882,suslov1992,suslov1989}. \textit{For}
$x(s)=c_{1}q^{s}+c_{2}q^{-s}+c_{3}$ \textit{or} $x(s)=\widetilde{c}_{1}%
s^{2}+\widetilde{c}_{2}s+\widetilde{c}_{3}$, \textit{the generalized power }
${[x_{\nu}(s)-x_{\nu}(z)]^{(\alpha)}}$ satisfy the following properties:%
\begin{align}
{[x_{\nu}(s)-x_{\nu}(z)][x_{\nu}(s)-x_{\nu}(z-1)]^{(\mu)}}  &  {=[x_{\nu
}(s)-x_{\nu}(z)]}^{(\mu)}{[x_{\nu}(s)-x_{\nu}(z-\mu)]}\\
&  {=}{[x_{\nu}(s)-x_{\nu}(z)]^{(\mu+1)};}%
\end{align}%
\begin{align}
&  {[x_{\nu-1}(s+1)-x_{\nu-1}(z)]}^{(\mu)}{[x_{\nu-\mu}(s)-x_{\nu-\mu}%
(z)]}\nonumber\\
{=[x_{\nu-\mu}(s+\mu)-x_{\nu-\mu}(z)]}  &  {[x_{\nu-1}(s)-x_{\nu-1}(z)]}%
^{(\mu)}{=}{[x_{\nu}(s)-x_{\nu}(z)]^{(\mu+1)};}%
\end{align}%
\begin{align}
\frac{\Delta_{z}}{\Delta x_{\nu-\mu+1}(z)}{[x_{\nu}(s)-x_{\nu}(z)]}^{(\mu)}
&  =-\frac{\nabla_{s}}{\nabla x_{\nu+1}(s)}{[x_{\nu+1}(s)-x_{\nu+1}(z)]}%
^{(\mu)}\\
&  =-[\mu]_{q}{[x_{\nu}(s)-x_{\nu}(z)]}^{(\mu-1)};
\end{align}%
\begin{align}
\frac{\nabla_{z}}{\nabla x_{\nu-\mu+1}(z)}\{\frac{{1}}{{[x_{\nu}(s)-x_{\nu
}(z)]}^{(\mu)}}\}  &  =-\frac{\Delta_{s}}{\Delta x_{\nu-1}(s)}\{\frac{{1}%
}{{[x_{\nu-1}(s)-x_{\nu-1}(z)]}^{(\mu)}}\}\\
&  =\frac{[\mu]_{q}}{{[x_{\nu}(s)-x_{\nu}(z)]}^{(\mu+1)}}.
\end{align}
where $[\mu]_{q}$ is defined as (\ref{muq}).
\end{proposition}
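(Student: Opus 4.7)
The plan is to verify each identity by substituting the explicit Gamma-function (or $q$-Gamma-function) representation of $[x_\nu(s)-x_\nu(z)]^{(\alpha)}$ from Definition \ref{power} and reducing everything to the functional equation $\Gamma(x+1)=x\Gamma(x)$, respectively its $q$-analogue $\Gamma_q(x+1)=[x]_q\Gamma_q(x)$. It suffices to argue on the general quadratic lattice $x(s)=\widetilde{c}_1 s^2+\widetilde{c}_2 s+\widetilde{c}_3$ and on the $q$-quadratic lattice $x(s)=c_1 q^s+c_2 q^{-s}+c_3$, since the two other cases in Definition \ref{power} are specializations.

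For identity (1), first observe that on the quadratic lattice $x_\nu(s)-x_\nu(z)=\widetilde{c}_1(s-z)(s+z+\nu+c)$. Multiplying this into $[x_\nu(s)-x_\nu(z-1)]^{(\mu)}$, the factor $(s-z)$ is absorbed via $\Gamma(s-z+\mu+1)/\Gamma(s-z+1)=(s-z)\cdot\Gamma(s-z+\mu+1)/\Gamma(s-z+1)\cdot \ldots$ style identities, and $(s+z+\nu+c)$ is absorbed via $\Gamma(s+z+\nu+c+1)/\Gamma(s+z+\nu+c)$, producing exactly the Gamma representation of $[x_\nu(s)-x_\nu(z)]^{(\mu+1)}$. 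The middle expression is symmetric: the factor $[x_\nu(s)-x_\nu(z-\mu)]=\widetilde{c}_1(s-z+\mu)(s+z+\nu-\mu+c)$ is absorbed analogously into $[x_\nu(s)-x_\nu(z)]^{(\mu)}$. Identity (2) follows by the same bookkeeping: the shifts $\nu\mapsto\nu-1,\nu-\mu$ together with the matching shifts in $s$ simply permute the arguments of the ``$s+z$''-type and ``$s-z$''-type Gammas in the representation.

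For identities (3) and (4), I would compute $\Delta_z[x_\nu(s)-x_\nu(z)]^{(\mu)}$ (respectively $\nabla_z$ of the reciprocal) directly from the Gamma representation; after factoring out the common Gammas at $z$, the residual polynomial in $s,z$ factors as $-[\mu]_q\cdot\Delta x_{\nu-\mu+1}(z)$, using that on the quadratic lattice $\Delta x_{\nu-\mu+1}(z)=\widetilde{c}_1(2z+\nu-\mu+c+2)$ and an analogous $q$-linear expression otherwise. Dividing yields $-[\mu]_q[x_\nu(s)-x_\nu(z)]^{(\mu-1)}$, and the mirror equality with $-\nabla_s[x_{\nu+1}(s)-x_{\nu+1}(z)]^{(\mu)}/\nabla x_{\nu+1}(s)$ is the same computation but exploiting the other pair of Gammas. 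Identity (4) is the analogue of (3) for the reciprocal power; it can be proved either by the same direct manipulation or deduced from (3) via the quotient rule from Proposition \ref{pro} followed by a routine induction on integer $\mu$ and analytic continuation in $\mu$.

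The main technical obstacle will be the $q$-quadratic case: each half-integer shift in $s$ or $\nu$ produces extra powers of $q^{\pm 1/2}$ through the prefactor $[c_1(1-q)^2]^\alpha q^{-\alpha(s+\nu/2)}$, and these must be carefully matched against the $q$-expansions of $\Delta x_{\nu-\mu+1}(z)$ and $\nabla x_{\nu+1}(s)$, which themselves carry half-integer shifts. In particular, verifying (2) relies on the invariance of $2s+\nu$ under the paired shifts appearing on both sides, so that the $q$-prefactors cancel cleanly. Modulo this careful accounting of $q$-exponents, each identity in the proposition reduces to a handful of applications of $\Gamma_q(x+1)=[x]_q\Gamma_q(x)$.
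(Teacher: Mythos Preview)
The paper does not supply its own proof of this proposition: it is quoted, with citations to Atakishiyev--Suslov and Suslov, as a known toolkit result, and then used freely throughout. So there is no ``paper's proof'' to compare against; your proposal is in fact the natural (and essentially the only) way to verify these identities, namely direct substitution of the explicit $\Gamma$/$\Gamma_q$ representations from Definition~\ref{power} and repeated use of $\Gamma(x+1)=x\,\Gamma(x)$ or its $q$-analogue. That is exactly how the cited references establish them.

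Your outline is sound and the structure is right. Two small remarks on the sketch. First, the factorisation $x_\nu(s)-x_\nu(z)=\widetilde c_1(s-z)(s+z+\nu+c)$ on the quadratic lattice (and its $q$-analogue $x_\nu(s)-x_\nu(z)=c_1 q^{-s-\nu/2}(q^{s-z}-1)(q^{s+z+\nu+c}-1)$, up to harmless constants) is the key algebraic input, and you should display it once rather than leave it implicit; every identity in the list is then a two-line consequence. Second, for identity~(4) there is no need for induction plus analytic continuation: once (1) and (3) are in hand, (4) follows immediately from the quotient rule in Proposition~\ref{pro} applied to $1/[x_\nu(s)-x_\nu(z)]^{(\mu)}$ together with (1), valid for general~$\mu$. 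Your warning about tracking the $q$-prefactor $q^{-\alpha(s+\nu/2)}$ in the $q$-quadratic case is well placed; the cleanest way to handle it is to note once that under $s\mapsto s+1$ or $\nu\mapsto\nu+2$ this prefactor picks up exactly the power of $q$ that the corresponding shift in $\Gamma_q$ arguments supplies, so the bookkeeping can be done uniformly.
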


Now let us first define the integer sum on non-uniform lattices $x_{\gamma
}(s)$ in detail, which is very helpful for us to define fractional sum on
non-uniform lattices $x_{\gamma}(s).$

For $\gamma\in R,$ the $1$-th order sum of $f(z)$ over $\{a+1,a+2,...,z\}$ on
non-uniform lattices $x_{\gamma}(s)$ is defined by%
\begin{equation}
y_{1}(z)=\nabla_{\gamma}^{-1}f(z)=\int_{a+1}^{z}f(s)d_{\nabla}x_{\gamma}(s),
\end{equation}
then by \textbf{Proposion} \ref{integration}, we have%

\begin{equation}
\nabla_{\gamma}^{1}\nabla_{\gamma}^{-1}f(z)=\frac{\nabla y_{1}(z)}{\nabla
x_{\gamma}(z)}=f(z),
\end{equation}
and $2$-th order sum of $f(z)$ over $\{a+1,a+2,...,z\}$ on non-uniform
lattices $x_{\gamma}(s)$ is defined by%

\begin{align}
y_{2}(z)  &  =\nabla_{\gamma}^{-2}f(z)=\nabla_{\gamma+1}^{-1}[\nabla_{\gamma
}^{-1}f(z)]=\int_{a+1}^{z}y_{1}(s)d_{\nabla}x_{\gamma+1}(s)\nonumber\\
&  =\int_{a+1}^{z}d_{\nabla}x_{\gamma+1}(s)\int_{a+1}^{s}f(t)d_{\nabla
}x_{\gamma}(t)\nonumber\\
&  =\int_{a+1}^{z}f(t)d_{\nabla}x_{\gamma}(t)\int_{t}^{z}d_{\nabla}%
x_{\gamma+1}(s)\nonumber\\
&  =\int_{a+1}^{z}[x_{\gamma+1}(z)-x_{\gamma+1}(t-1)]f(s)d_{\nabla}x_{\gamma
}(s).
\end{align}
Meanwhile, we have%
\begin{align}
\nabla_{\gamma+1}^{1}\nabla_{\gamma+1}^{-1}y_{1}\left(  z\right)   &
=\frac{\nabla y_{2}(z)}{\nabla x_{\gamma+1}(z)}=y_{1}(z),\nonumber\\
\nabla_{\gamma}^{2}\nabla_{\gamma}^{-2}f\left(  z\right)   &  =\frac{\nabla
}{\nabla x_{\gamma}(z)}(\frac{\nabla y_{2}(z)}{\nabla x_{\gamma+1}(z)}%
)=\frac{\nabla y_{1}(z)}{\nabla x_{\gamma}(z)}=f(z),
\end{align}
and $3$-th order sum of $f(z)$ over $\{a+1,a+2,...,z\}$ on non-uniform
lattices $x_{\gamma}(s)$ is defined by%

\begin{align}
y_{3}(z)  &  =\nabla_{\gamma}^{-3}f(z)=\nabla_{\gamma+2}^{-1}[\nabla_{\gamma
}^{-2}f(z)]=\int_{a+1}^{z}y_{2}(s)d_{\nabla}x_{\gamma+2}(s)\nonumber\\
&  =\int_{a+1}^{z}d_{\nabla}x_{\gamma+2}(s)\int_{a+1}^{s}[x_{\gamma
+1}(s)-x_{\gamma+1}(t-1)]f(t)d_{\nabla}x_{\gamma}(t)\nonumber\\
&  =\int_{a+1}^{z}f(t)d_{\nabla}x_{\gamma}(t)\int_{t}^{z}[x_{\gamma
+1}(s)-x_{\gamma+1}(t-1)]d_{\nabla}x_{\gamma+2}(s).\nonumber
\end{align}
In View of \textbf{Proposition} \ref{properties} one has%

\begin{equation}
\frac{\nabla}{\nabla x_{\gamma+2}(s)}[x_{\gamma+2}(s)-x_{\gamma+2}%
(t-1)]^{(2)}=[2]_{q}[x_{\gamma+1}(s)-x_{\gamma+1}(t-1)],
\end{equation}
then by the use of \textbf{Proposition} \ref{integration}, we have%

\begin{equation}
\frac{\lbrack x_{\gamma+2}(z)-x_{\gamma+2}(t-1)]^{(2)}}{[2]_{q}}=\int_{t}%
^{z}[x_{\gamma+1}(s)-x_{\gamma+1}(t-1)]d_{\nabla}x_{\gamma+2}(s).
\end{equation}
Therefore, we obtain that $3$-th order sum of $f(z)$ over $\{a+1,a+2,...,z\}$
on non-uniform lattices $x_{\gamma}(s)$ is%

\begin{align}
y_{3}(z)  &  =\nabla_{\gamma}^{-3}f(z)=\nabla_{\gamma+2}^{-1}[\nabla_{\gamma
}^{-2}f(z)]\nonumber\\
&  =\frac{1}{[\Gamma(3)]_{q}}\int_{a+1}^{z}[x_{\gamma+2}(z)-x_{\gamma
+2}(t-1)]^{(2)}f(s)d_{\nabla}x_{\gamma}(s).
\end{align}
Mean while, it is easy to know that
\begin{equation}
\nabla_{\gamma}^{3}\nabla_{\gamma}^{-3}f\left(  z\right)  =\frac{\nabla
}{\nabla x_{\gamma}(z)}(\frac{\nabla}{\nabla x_{\gamma+1}(z)}(\frac{\nabla
y_{3}(z)}{\nabla x_{\gamma+2}(z)}))=f(z).
\end{equation}

More generalaly, by the induction, we can define the $k$-th order sum of
$f(z)$ over $\{a+1,a+2,...,z\}$ on non-uniform lattices $x_{\gamma}(s)$ as%

\begin{align}
y_{k}(z)  &  =\nabla_{\gamma}^{-k}f(z)=\nabla_{\gamma+k-1}^{-1}[\nabla
_{\gamma}^{-(k-1)}f(z)]=\int_{a+1}^{z}y_{k-1}(s)d_{\nabla}x_{\gamma
+k-1}(s)\nonumber\\
&  =\frac{1}{[\Gamma(k)]_{q}}\int_{a+1}^{z}[x_{\gamma+k-1}(z)-x_{\gamma
+k-1}(t-1)]^{(k-1)}f(t)d_{\nabla}x_{\gamma}(t),(k=1,2,...) \label{intsum}%
\end{align}
where%
\[
\lbrack\Gamma(k)]_{q}=%
%TCIMACRO{\QATOPD{\{}{.}{q^{-(k-1)(k-2)}\Gamma_{q}(k),\text{if }x(s)=c_{1}%
%q^{s}+c_{2}q^{-s}+c_{3};}{\Gamma(\alpha),\text{if }x(s)=\widetilde{c}_{1}%
%s^{2}+\widetilde{c}_{2}s+\widetilde{c}_{3},}}%
%BeginExpansion
\genfrac{\{}{.}{0pt}{}{q^{-(k-1)(k-2)}\Gamma_{q}(k),\text{if }x(s)=c_{1}%
q^{s}+c_{2}q^{-s}+c_{3};}{\Gamma(\alpha),\text{if }x(s)=\widetilde{c}_{1}%
s^{2}+\widetilde{c}_{2}s+\widetilde{c}_{3},}%
%EndExpansion
\]
which satisfies the following%

\[
\lbrack\Gamma(k+1)]_{q}=[k]_{q}[\Gamma(k)]_{q},[\Gamma(2)]_{q}=[1]_{q}%
[\Gamma(1)]_{q}=1.
\]
And then we have%
\begin{equation}
\nabla_{\gamma}^{k}\nabla_{\gamma}^{-k}f\left(  z\right)  =\frac{\nabla
}{\nabla x_{\gamma}(z)}(\frac{\nabla}{\nabla x_{\gamma+1}(z)}...(\frac{\nabla
y_{k}(z)}{\nabla x_{\gamma+k-1}(z)}))=f(z).(k=1,2,...)
\end{equation}

It is noted that the right hand side of (\ref{intsum}) is still meanful when
$k\in C,$ so we can give the definition of fractional sum of $f(z)$ on
non-uniform lattices $x_{\gamma}(s)$ as follows

\begin{definition}
(Fractional sum on non-uniform lattices)\label{maindef1} For any
$\operatorname{Re}\alpha\in R^{+},$ the $\alpha$-th order sum of $f(z)$ over
$\{a+1,a+2,...,z\}$ on non-uniform lattices (\ref{non2}) and (\ref{non1}) is
defined by%
\begin{equation}
\nabla_{\gamma}^{-\alpha}f(z)=\frac{1}{[\Gamma(\alpha)]_{q}}\int_{a+1}%
^{z}[x_{\gamma+\alpha-1}(z)-x_{\gamma+\alpha-1}(t-1)]^{(\alpha-1)}%
f(s)d_{\nabla}x_{\gamma}(s), \label{asum}%
\end{equation}
where
\[
\lbrack\Gamma(\alpha)]_{q}=%
%TCIMACRO{\QATOPD{\{}{.}{q^{-(s-1)(s-2)}\Gamma_{q}(\alpha),\text{if }%
%x(s)=c_{1}q^{s}+c_{2}q^{-s}+c_{3};}{\Gamma(\alpha),\text{if }%
%x(s)=\widetilde{c}_{1}s^{2}+\widetilde{c}_{2}s+\widetilde{c}_{3},}}%
%BeginExpansion
\genfrac{\{}{.}{0pt}{}{q^{-(s-1)(s-2)}\Gamma_{q}(\alpha),\text{if }%
x(s)=c_{1}q^{s}+c_{2}q^{-s}+c_{3};}{\Gamma(\alpha),\text{if }%
x(s)=\widetilde{c}_{1}s^{2}+\widetilde{c}_{2}s+\widetilde{c}_{3},}%
%EndExpansion
\]
which satisfies the following%
\[
\lbrack\Gamma(\alpha+1)]_{q}=[\alpha]_{q}[\Gamma(\alpha)]_{q}.
\]

\end{definition}

\section{The Analogue of Euler Beta Formula on Non-uniform Lattices}

Euler Beta formula is well known as%

\[
\int_{0}^{1}(1-t)^{\alpha-1}t^{\beta-1}dt=B(\alpha,\beta)=\frac{\Gamma
(\alpha)\Gamma(\beta)}{\Gamma(\alpha+\beta)},(\operatorname{Re}\alpha
>0,\operatorname{Re}\beta>0)
\]
or%

\[
\int_{a}^{z}\frac{(z-t)^{\alpha-1}}{\Gamma(\alpha)}\frac{(t-a)^{\beta-1}%
}{\Gamma(\beta)}dt=\frac{(z-a)^{\alpha+\beta-1}}{\Gamma(\alpha+\beta
)}.(\operatorname{Re}\alpha>0,\operatorname{Re}\beta>0)
\]

In this section, we obtain the analogue Euler Beta formula on non-uniform
lattices. It is very crucial for us to propose several definitions in this
manuscript. And it is also of independent importance.

\begin{theorem}
(Euler Beta formula on non-uniform lattices)\label{eulerbeta} For any
$\alpha,\beta\in C,$ then for non-uniform lattices $x(s)$, we have%
\begin{align}
&  \int_{a+1}^{z}\frac{[x_{\beta}(z)-x_{\beta}(t-1)]^{(\beta-1)}}%
{[\Gamma(\beta)]_{q}}\frac{[x(t)-x(a)]^{(\alpha)}}{[\Gamma(\alpha+1)]_{q}%
}d_{\nabla}x_{1}(t)\nonumber\\
&  =\frac{[x_{\beta}(z)-x_{\beta}(a)]^{(\alpha+\beta)}}{[\Gamma(\alpha
+\beta+1)]_{q}}.
\end{align}

\end{theorem}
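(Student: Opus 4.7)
The plan is to prove the identity first for $\beta = k$ a positive integer by iterating single-order sums, and then to extend to arbitrary $\beta \in \mathbb{C}$ by analytic continuation. For $\beta = k$, formula (\ref{intsum}) with $\gamma = 1$ identifies the left-hand side as $\nabla_1^{-k} g(z)$ with $g(t) := [x(t)-x(a)]^{(\alpha)}/[\Gamma(\alpha+1)]_q$, so the task reduces to
\[
\nabla_1^{-k} g(z) = \frac{[x_k(z)-x_k(a)]^{(\alpha+k)}}{[\Gamma(\alpha+k+1)]_q}.
\]

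By the recursive definition $\nabla_1^{-k} = \nabla_k^{-1} \circ \nabla_{k-1}^{-1} \circ \cdots \circ \nabla_1^{-1}$, I will iterate $k$ single sums. The crucial antidifference comes from the middle equality of Proposition \ref{properties}: specialising $s \mapsto t$ (variable) and $z \mapsto a$ (constant) in $\nabla_{\nu+1}[x_{\nu+1}(s)-x_{\nu+1}(z)]^{(\mu)} = [\mu]_q [x_\nu(s)-x_\nu(z)]^{(\mu-1)}$ with $\nu = j$, $\mu = \alpha+j+1$ yields
\[
\nabla_{j+1}\bigl[x_{j+1}(t)-x_{j+1}(a)\bigr]^{(\alpha+j+1)} = [\alpha+j+1]_q\,\bigl[x_j(t)-x_j(a)\bigr]^{(\alpha+j)}.
\]
Proposition \ref{integration} then delivers, together with the vanishing boundary term $[x_{j+1}(a)-x_{j+1}(a)]^{(\alpha+j+1)} = 0$ (which follows from the $\Gamma(0)^{-1}$ factor in Definition \ref{power}),
\[
\nabla_{j+1}^{-1}\bigl([x_j(\cdot)-x_j(a)]^{(\alpha+j)}\bigr)(z) = \frac{[x_{j+1}(z)-x_{j+1}(a)]^{(\alpha+j+1)}}{[\alpha+j+1]_q}.
\]
Starting from $j = 0$ with the base power $[x(t)-x(a)]^{(\alpha)}$ and iterating up to $j = k-1$, telescoping through $[\Gamma(\alpha+j+2)]_q = [\alpha+j+1]_q\,[\Gamma(\alpha+j+1)]_q$, will produce the claim for $\beta = k$.

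To pass from $\beta \in \mathbb{Z}^+$ to arbitrary $\beta \in \mathbb{C}$, I will invoke analytic continuation. For fixed $\alpha$, $a$, and $z - a = N \in \mathbb{N}$, the left-hand side is a finite sum of $N$ terms, each of which unpacks via Definition \ref{power} into an explicit meromorphic function of $\beta$ (rational in $\beta$ on the quadratic lattice, rational in $q^\beta$ up to an exponential factor on the $q$-quadratic lattice); the right-hand side has the same meromorphic nature. Since the two sides agree on the infinite set $\mathbb{Z}^+$, the identity principle forces agreement on the whole common domain. The hardest part of the proof will be precisely this continuation step: the factor $1/[\Gamma(\beta)]_q$ and the generalised power $[x_\beta(z)-x_\beta(t-1)]^{(\beta-1)}$ individually carry $\beta$-dependent poles and zeros, so I will need to combine them via Definition \ref{power} and verify explicitly that the combined left-hand side carries no spurious singularities beyond those exhibited by the right-hand side before the identity principle can be safely applied. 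The bookkeeping in the integer-$\beta$ step, by contrast, is routine once the antidifference from Proposition \ref{properties} has been identified.
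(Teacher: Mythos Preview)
Your route is genuinely different from the paper's. The paper never isolates integer $\beta$; instead it fixes $\beta$ arbitrary and derives a first-order recurrence in $\alpha$. Setting $I(\alpha)=\int_{a+1}^{z}[x_\beta(z)-x_\beta(t-1)]^{(\beta-1)}[x(t)-x(a)]^{(\alpha)}\,d_\nabla x_1(t)$, the paper applies the discrete product rule to $\sigma(t)\rho(t)=[x_1(t)-x_1(a)]^{(\alpha+1)}[x_\beta(z)-x_\beta(t)]^{(\beta)}$, invokes an algebraic identity among lattice values (Lemma~\ref{lem13}, resting on Lemma~\ref{lem12}) to rewrite the resulting coefficient $\tau(t)$, and telescopes $\sum_{t=a+1}^{z}\nabla_t[\sigma\rho]=0$ to get $I(\alpha+1)/I(\alpha)$ in closed form; evaluating $I(0)$ fixes the constant. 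This is an Abel-summation argument that handles all $\alpha,\beta$ simultaneously and needs no analytic continuation. What your approach buys is conceptual transparency for integer $\beta$: it makes explicit that the formula is nothing more than iterating the single antidifference from Proposition~\ref{properties}, which ties the theorem directly back to the motivation in~(\ref{intsum}).

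Your integer-$\beta$ step is fine. The continuation step, however, is not justified as written. The summands are \emph{not} rational in $\beta$: on the quadratic lattice, Definition~\ref{power} gives $[x_\beta(z)-x_\beta(t-1)]^{(\beta-1)}/\Gamma(\beta)$ a factor $\Gamma(z+t+\beta+c)$, and the right-hand side carries $\Gamma(z+a+\beta+c+1)$; neither is rational in $\beta$, and two meromorphic functions agreeing on $\mathbb{Z}^{+}$ need not coincide (the identity principle requires an accumulation point). The repair is that for $t\in\{a+1,\dots,z\}$ one has $\Gamma(z+t+\beta+c)=(z+a+1+\beta+c)_{t-a-1}\,\Gamma(z+a+1+\beta+c)$, and likewise $\Gamma(z-a+\alpha+\beta)/\Gamma(\alpha+\beta+1)=(\alpha+\beta+1)_{z-a-1}$. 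After dividing both sides by the common factor $\widetilde{c}_{1}^{\,\beta}\,\Gamma(z+a+1+\beta+c)$ (and analogously with $q$-gamma factors and $[c_1(1-q)^2]^{\beta}q^{-\beta z}$ in the $q$-quadratic case), what remains on each side is a genuine polynomial in $\beta$; for polynomials, agreement on $\mathbb{Z}^{+}$ does force equality. You should carry out this reduction explicitly rather than asserting rationality, or else invoke Carlson's theorem after verifying the requisite growth bounds.
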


The proof of \textbf{Theorem} \ref{eulerbeta} should use some lemmas.

\begin{lemma}
\label{lem12}For any $\alpha,\beta,$we have
\begin{equation}
\lbrack\alpha+\beta]_{q}x(t)-[\alpha]_{q}x_{-\beta}(t)-[\beta]_{q}x_{\alpha
}(t)=const. \label{lem12eq}%
\end{equation}

\end{lemma}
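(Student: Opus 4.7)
The plan is to verify the identity by direct substitution, treating the two lattice cases of Definition \ref{def1} separately and exploiting the following observation: $x(t)$, $x_{-\beta}(t)$, and $x_\alpha(t)$ all lie in the three-dimensional function space spanned by $\{1, q^t, q^{-t}\}$ (for the $q$-quadratic lattice) or $\{1, t, t^2\}$ (for the quadratic lattice). Being constant in $t$ therefore reduces to showing that the coefficients of the two non-constant basis functions vanish in the combination $[\alpha+\beta]_q x(t) - [\alpha]_q x_{-\beta}(t) - [\beta]_q x_\alpha(t)$; the residual constant term (a multiple of $c_3$ or $\widetilde{c}_3$ together with a lower-order contribution) need not vanish but is $t$-independent, which is all that is asserted.

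For the quadratic lattice $x(s) = \widetilde{c}_1 s^2 + \widetilde{c}_2 s + \widetilde{c}_3$, where $[\mu]_q = \mu$, I would expand $x_{-\beta}(t) = \widetilde{c}_1 (t - \beta/2)^2 + \widetilde{c}_2 (t - \beta/2) + \widetilde{c}_3$ and likewise $x_\alpha(t)$, then collect powers of $t$ in $\alpha x_{-\beta}(t) + \beta x_\alpha(t)$. The shift-induced cross terms cancel pairwise: the $\widetilde{c}_1$ linear-in-$t$ contribution $-\alpha\beta + \beta\alpha$ vanishes, and likewise the $\widetilde{c}_2$ constant contribution $-\alpha\beta/2 + \beta\alpha/2$. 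The $t^2$ and $t$ coefficients then agree exactly with those of $(\alpha+\beta) x(t)$, leaving only the constant residue $-\widetilde{c}_1 \alpha\beta(\alpha+\beta)/4$ on the left-hand side.

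For the $q$-quadratic lattice $x(s) = c_1 q^s + c_2 q^{-s} + c_3$, I would multiply through by $q^{1/2} - q^{-1/2}$ to clear the denominators in the $q$-brackets and then separate the coefficients of $c_1 q^t$, $c_2 q^{-t}$, and $c_3$. The coefficient of $c_1 q^t$ becomes
\[
\bigl(q^{(\alpha+\beta)/2} - q^{-(\alpha+\beta)/2}\bigr) - q^{-\beta/2}\bigl(q^{\alpha/2} - q^{-\alpha/2}\bigr) - q^{\alpha/2}\bigl(q^{\beta/2} - q^{-\beta/2}\bigr),
\]
which expands to zero in four telescoping monomials; the $c_2 q^{-t}$ coefficient vanishes by the symmetric calculation under $q \leftrightarrow q^{-1}$. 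The remaining $c_3$ coefficient is $([\alpha+\beta]_q - [\alpha]_q - [\beta]_q)c_3$, manifestly independent of $t$, so the whole left-hand side is constant in $t$.

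No real conceptual obstacle arises; the proof is bookkeeping. The basis observation at the outset is what makes the calculation efficient, reducing the work to verifying the vanishing of exactly two coefficients in each case rather than simplifying the full expression; the only mild subtlety is keeping track of signs in the $q$-exponents in the second case.
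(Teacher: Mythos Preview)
Your proposal is correct and follows essentially the same route as the paper: direct substitution of the two lattice forms and collection of the coefficients of the independent functions of $t$. The paper's computation is terser but identical in substance; in fact your treatment of the $q$-quadratic case is slightly more careful, since the paper writes $LHS=0$ after displaying only the $c_1$ and $c_2$ contributions, whereas you correctly note that the $c_3$ contribution $\bigl([\alpha+\beta]_q-[\alpha]_q-[\beta]_q\bigr)c_3$ is a (generally nonzero) constant in $t$, which is all the lemma requires.
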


\begin{proof}
If we set $x(t)=\widetilde{c_{1}}t^{2}+\widetilde{c_{2}}t+\widetilde{c_{3}},$
then left hand side of Eq.(\ref{lem12eq}) is%

\begin{align}
LHS  &  =\widetilde{c_{1}}[(\alpha+\beta)t^{2}-\alpha(t-\frac{\beta}{2}%
)^{2}-\beta(t+\frac{\alpha}{2})^{2}]\nonumber\\
&  +\widetilde{c_{2}}[(\alpha+\beta)t-\alpha(t-\frac{\beta}{2})-\beta
(t+\frac{\alpha}{2})]\\
&  =-\frac{\alpha\beta}{4}(\alpha+\beta)\widetilde{c_{1}}=const.
\end{align}
If we set $x(t)=c_{1}q^{t}+c_{2}q^{-t}+c_{3},$ then left hand side of
Eq.(\ref{lem12eq}) is%

\begin{align}
LHS  &  =c_{1}[\frac{q^{\frac{\alpha+\beta}{2}}-q^{-\frac{\alpha+\beta}{2}}%
}{q^{\frac{1}{2}}-q^{-\frac{1}{2}}}q^{t}-\frac{q^{\frac{\alpha}{2}}%
-q^{-\frac{\alpha}{2}}}{q^{\frac{1}{2}}-q^{-\frac{1}{2}}}q^{t-\frac{\beta}{2}%
}-\frac{q^{\frac{\beta}{2}}-q^{-\frac{\beta}{2}}}{q^{\frac{1}{2}}-q^{-\frac
{1}{2}}}q^{t+\frac{\alpha}{2}}]\nonumber\\
&  +c_{2}[\frac{q^{\frac{\alpha+\beta}{2}}-q^{-\frac{\alpha+\beta}{2}}%
}{q^{\frac{1}{2}}-q^{-\frac{1}{2}}}q^{-t}-\frac{q^{\frac{\alpha}{2}}%
-q^{-\frac{\alpha}{2}}}{q^{\frac{1}{2}}-q^{-\frac{1}{2}}}q^{-t+\frac{\beta}%
{2}}-\frac{q^{\frac{\beta}{2}}-q^{-\frac{\beta}{2}}}{q^{\frac{1}{2}}%
-q^{-\frac{1}{2}}}q^{-t-\frac{\alpha}{2}}]\\
&  =0.
\end{align}

\end{proof}

\begin{lemma}
\label{lem13}For any $\alpha,\beta,$we have
\begin{align}
&  [\alpha+1]_{q}[x_{\beta}(z)-x_{\beta}(t-\beta)]-[\beta]_{q}[x_{1-\alpha
}(t+\alpha)-x_{1-\alpha}(a)]\nonumber\\
&  =[\alpha+1]_{q}[x_{\beta}(z)-x_{\beta}(a-\alpha-\beta)]\nonumber\\
&  -[\alpha+\beta+1]_{q}[x(t)-x(a-\alpha)]. \label{lem}%
\end{align}

\end{lemma}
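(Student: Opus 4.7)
The plan is to reduce Lemma \ref{lem13} directly to Lemma \ref{lem12} via index bookkeeping and one careful subtraction.

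First I would translate the shifted lattice points appearing on each side into a common form using $x_\gamma(s)=x(s+\gamma/2)$. A short computation gives
$$
x_\beta(t-\beta)=x_{-\beta}(t),\qquad
x_\beta(a-\alpha-\beta)=x_{-\beta}(a-\alpha),\qquad
x_{1-\alpha}(t+\alpha)=x_{\alpha+1}(t),\qquad
x_{\alpha+1}(a-\alpha)=x_{1-\alpha}(a).
$$
Using these, the identity to be proved is equivalent (after moving all terms in $z$ to one side and canceling the common $[\alpha+1]_q x_\beta(z)$) to
$$
[\alpha+\beta+1]_q\bigl[x(t)-x(a-\alpha)\bigr]
-[\beta]_q\bigl[x_{\alpha+1}(t)-x_{1-\alpha}(a)\bigr]
=[\alpha+1]_q\bigl[x_{-\beta}(t)-x_{-\beta}(a-\alpha)\bigr].
$$

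Next I would apply Lemma \ref{lem12} with $(\alpha,\beta)$ replaced by $(\alpha+1,\beta)$. That lemma then says that, as a function of its argument $t$,
$$
[\alpha+\beta+1]_q\,x(t)-[\alpha+1]_q\,x_{-\beta}(t)-[\beta]_q\,x_{\alpha+1}(t)
$$
is a constant independent of $t$. Evaluating this expression at $t$ and again at $t=a-\alpha$ (using $x_{\alpha+1}(a-\alpha)=x_{1-\alpha}(a)$) and subtracting eliminates the constant and yields precisely the displayed identity above. Reversing the initial reshuffling then produces the statement of Lemma \ref{lem13}.

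The main obstacle is simply keeping the half-integer shifts straight so that Lemma \ref{lem12} can be invoked with the correct parameters; once the correspondence $x_{1-\alpha}(t+\alpha)=x_{\alpha+1}(t)$ and $x_\beta(\,\cdot\, -\beta)=x_{-\beta}(\,\cdot\,)$ is noted, the proof is a one-step reduction with no additional case analysis between the quadratic and $q$-quadratic lattices, since Lemma \ref{lem12} already handles both uniformly.
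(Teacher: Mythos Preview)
Your proof is correct and follows essentially the same route as the paper's: both rewrite the identity so that all $t$-dependent terms sit on one side and all $(a-\alpha)$-dependent terms on the other, then invoke Lemma~\ref{lem12} with the parameter shift $\alpha\mapsto\alpha+1$ (the paper writes $\widetilde{\alpha}=\alpha+1$) to conclude that the resulting expression is constant in its argument. Your explicit half-shift identities $x_\beta(t-\beta)=x_{-\beta}(t)$ and $x_{1-\alpha}(t+\alpha)=x_{\alpha+1}(t)$ make the bookkeeping a bit more transparent than the paper's presentation, but the substance is the same.
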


\begin{proof}
(\ref{lem}) is equivalent to%
\begin{align}
&  [\alpha+\beta+1]_{q}x(t)-[\alpha+1]_{q}x_{\beta}(t-\beta)-[\beta
]_{q}x_{1-\alpha}(t+\alpha)\nonumber\\
&  =[\alpha+\beta+1]_{q}x(a-\alpha)-[\alpha+1]_{q}x_{\beta}(a-\alpha
-\beta)-[\beta]_{q}x_{1-\alpha}(a).
\end{align}
Set $\alpha+1=\widetilde{\alpha},$ we only to prove that%
\begin{align}
&  [\widetilde{\alpha}+\beta]_{q}x(t)-[\widetilde{\alpha}]_{q}x_{\beta
}(t-\beta)-[\beta]_{q}x_{2-\widetilde{\alpha}}(t+\widetilde{\alpha
}-1)\nonumber\\
&  =[\widetilde{\alpha}+\beta]_{q}x(a-\widetilde{\alpha}+1)-[\widetilde{\alpha
}]_{q}x_{\beta}(a-\widetilde{\alpha}+1-\beta)-[\beta]_{q}%
x_{2-\widetilde{\alpha}}(a).
\end{align}
That is
\begin{align}
&  [\widetilde{\alpha}+\beta]_{q}x(t)-[\widetilde{\alpha}]_{q}x_{-\beta
}(t)-[\beta]_{q}x_{\widetilde{\alpha}}(t)\nonumber\\
&  =[\widetilde{\alpha}+\beta]_{q}x(a-\widetilde{\alpha}+1)-[\alpha
]_{q}x_{-\beta}(a-\widetilde{\alpha}+1)-[\beta]_{q}x_{\widetilde{\alpha}%
}(a-\widetilde{\alpha}+1). \label{lem2}%
\end{align}
By \textbf{Lemma} \ref{lem12}, Eq. (\ref{lem2}) holds, and then Eq.
(\ref{lem}) holds.
\end{proof}

Using \textbf{Proposition} \ref{properties} and \textbf{Lemma} \ref{lem13},
now it is time for us to prove \textbf{Theorem} \ref{eulerbeta}.

\textbf{Proof of Theorem \ref{eulerbeta}:} Set
\begin{equation}
\rho(t)=[x(t)-x(a)]^{(\alpha)}[x_{\beta}(z)-x_{\beta}(t-1)]^{(\beta-1)},
\end{equation}
and
\begin{equation}
\sigma(t)=[x_{1-\alpha}(t+\alpha)-x_{1-\alpha}(a)][x_{\beta}(z)-x_{\beta}(t)].
\end{equation}
By \textbf{Proposition} \ref{properties}, since%

\begin{equation}
\lbrack x_{1-\alpha}(t+\alpha)-x_{1-\alpha}(a)][x(t)-x(a)]^{(\alpha)}%
=[x_{1}(t)-x_{1}(a)]^{(\alpha+1)}%
\end{equation}
and%
\begin{equation}
\lbrack x_{\beta}(z)-x_{\beta}(t)][x_{\beta}(z)-x_{\beta}(t-1)]^{(\beta
-1)}=[x_{\beta}(z)-x_{\beta}(t)]^{(\beta)}.
\end{equation}
so that we obtain%
\begin{equation}
\sigma(t)\rho(t)=[x_{1}(t)-x_{1}(a)]^{(\alpha+1)}[x_{\beta}(z)-x_{\beta
}(t)]^{(\beta)},
\end{equation}

Making use of%

\[
\nabla_{t}[f(t)g(t)]=g(t-1)\Delta_{t}[f(t)]+f(t)\nabla_{t}[g(t)],
\]
where%

\[
f(t)=[x_{1}(t)-x_{1}(a)]^{(\alpha+1)},g(t)=[x_{\beta}(z)-x_{\beta}%
(t)]^{(\beta)},
\]
let's calculate the$\frac{\nabla_{t}[\sigma(t)\rho(t)]}{\nabla x_{1}(t)}.$

From \textbf{Proposition} \ref{properties}, we have%

\[
\frac{\nabla_{t}}{\nabla x_{1}(t)}\{[x_{1}(t)-x_{1}(a)]^{(\alpha+1)}%
\}=[\alpha+1]_{q}[x(t)-x(a)]^{(\alpha)},
\]
and%
\begin{align*}
&  \frac{\nabla_{t}}{\nabla x_{1}(t)}\{[x_{\beta}(z)-x_{\beta}(t)]^{(\beta
)}\}\\
&  =\frac{\Delta_{t}}{\Delta x_{1}(t-1)}\{[x_{\beta}(z)-x_{\beta
}(t-1)]^{(\beta)}\}\\
&  =-[\beta]_{q}[x_{\beta}(z)-x_{\beta}(t-1)]^{(\beta-1)}.
\end{align*}
These yield%

\begin{align}
&  \frac{\nabla_{t}}{\nabla x_{1}(t)}\{[x_{1}(t)-x_{1}(a)]^{(\alpha
+1)}[x_{\beta}(z)-x_{\beta}(t)]^{(\beta)}\}\nonumber\\
&  =[\alpha+1]_{q}[x(t)-x(a)]^{(\alpha)}[x_{\beta}(z)-x_{\beta}(t-1)]^{(\beta
)}\nonumber\\
&  -[\beta]_{q}[x_{1}(t)-x_{1}(a)]^{(\alpha+1)}[x_{\beta}(z)-x_{\beta
}(t-1)]^{(\beta-1)}\nonumber\\
&  =\{[\alpha+1]_{q}[x_{\beta}(z)-x_{\beta}(t-\beta)]-[\beta]_{q}[x_{1-\alpha
}(t+\alpha)-x_{1-\alpha}(a)]\}\rho(t)\nonumber\\
&  \equiv\tau(t)\rho(t),
\end{align}
where
\begin{equation}
\tau(t)=[\alpha+1]_{q}[x_{\beta}(z)-x_{\beta}(t-\beta)]-[\beta]_{q}%
[x_{1-\alpha}(t+\alpha)-x_{1-\alpha}(a)].
\end{equation}
this is due to%

\[
\lbrack x_{\beta}(z)-x_{\beta}(t-1)]^{(\beta)}=[x_{\beta}(z)-x_{\beta}%
(t-\beta)][x_{\beta}(z)-x_{\beta}(t-1)]^{(\beta-1)}.
\]
Then from \textbf{Lemma} \ref{lem13}, it yields%
\begin{equation}
\tau(t)=[\alpha+1]_{q}[x_{\beta}(z)-x_{\beta}(a-\alpha-\beta)]-[\alpha
+\beta+1]_{q}[x(t)-x(a-\alpha)].
\end{equation}
So that one gets%
\begin{align*}
&  \frac{\nabla_{t}}{\nabla x_{1}(t)}\{[x_{1}(t)-x_{1}(a)]^{(\alpha
+1)}[x_{\beta}(z)-x_{\beta}(t)]^{(\beta)}\}\\
&  =\{[\alpha+1]_{q}[x_{\beta}(z)-x_{\beta}(a-\alpha-\beta)]\\
&  -[\alpha+\beta+1]_{q}[x(t)-x(a-\alpha)]\}\rho(t).
\end{align*}
Or%
\begin{align}
&  \nabla_{t}\{[x_{1}(t)-x_{1}(a)]^{(\alpha+1)}[x_{\beta}(z)-x_{\beta
}(t)]^{(\beta)}\}\nonumber\\
&  =\{[\alpha+1]_{q}[x_{\beta}(z)-x_{\beta}(a-\alpha-\beta)]\nonumber\\
&  -[\alpha+\beta+1]_{q}[x(t)-x(a-\alpha)]\}\nonumber\\
&  \cdot\lbrack x(t)-x(a)]^{(\alpha)}[y_{\beta}(z)-x_{\beta}(t-1)]^{(\beta
-1)}\nabla x_{1}(t).
\end{align}
Summing from $a+1$ to $z$, we have%
\begin{align}
&
%TCIMACRO{\dsum \limits_{t=a+1}^{z}}%
%BeginExpansion
{\displaystyle\sum\limits_{t=a+1}^{z}}
%EndExpansion
\nabla_{t}\{[x_{1}(t)-x_{1}(a)]^{(\alpha+1)}[x_{\beta}(z)-x_{\beta
}(t)]^{(\beta)}\}\nonumber\\
&  =\int_{a+1}^{z}\{[\alpha+1]_{q}[x_{\beta}(z)-x_{\beta}(a-\alpha
-\beta)]\nonumber\\
&  -[\alpha+\beta+1]_{q}[x(t)-x(a-\alpha)]\}\nonumber\\
&  \cdot\lbrack x(t)-x(a)]^{(\alpha)}[x_{\beta}(z)-x_{\beta}(t-1)]^{(\beta
-1)}d_{\nabla}x_{1}(t). \label{imeq}%
\end{align}
Set%
\begin{equation}
I(\alpha)=\int_{a+1}^{z}[x_{\beta}(z)-x_{\beta}(t-1)]^{(\beta-1)}%
[x(t)-x(a)]^{(\alpha)}d_{\nabla}x_{1}(t), \label{ialpha}%
\end{equation}
and%
\begin{equation}
I(\alpha+1)=\int_{a+1}^{z}[x_{\beta}(z)-x_{\beta}(t-1)]^{(\beta-1)}%
[x(t)-x(a)]^{(\alpha+1)}d_{\nabla}x_{1}(t).
\end{equation}
Then from (\ref{imeq}) and by the use of \textbf{Proposition} \ref{properties}%
, one has%

\begin{align*}
&
%TCIMACRO{\dsum \limits_{t=a+1}^{z}}%
%BeginExpansion
{\displaystyle\sum\limits_{t=a+1}^{z}}
%EndExpansion
\nabla_{t}\{[x_{1}(t)-x_{1}(a)]^{(\alpha+1)}[x_{\beta}(z)-x_{\beta
}(t)]^{(\beta)}\}\\
&  =[\alpha+1]_{q}[x_{\beta}(z)-x_{\beta}(a-\alpha-\beta)]\int_{a+1}%
^{z}[x(t)-x(a)]^{(\alpha)}[x_{\beta}(z)-x_{\beta}(t-1)]^{(\beta-1)}d_{\nabla
}x_{1}(t)\\
&  -[\alpha+\beta+1]_{q}\int_{a+1}^{z}[x(t)-x(a-\alpha)][x(t)-x(a)]^{(\alpha
)}[x_{\beta}(z)-x_{\beta}(t-1)]^{(\beta-1)}d_{\nabla}x_{1}(t)\\
&  =[\alpha+1]_{q}[x_{\beta}(z)-x_{\beta}(a-\alpha-\beta)]\int_{a+1}%
^{z}[x(t)-x(a)]^{(\alpha)}[x_{\beta}(z)-x_{\beta}(t-1)]^{(\beta-1)}d_{\nabla
}x_{1}(t)\\
&  -[\alpha+\beta+1]_{q}\int_{a+1}^{z}[x(t)-x(a)]^{(\alpha+1)}[x_{\beta
}(z)-x_{\beta}(t-1)]^{(\beta-1)}d_{\nabla}x_{1}(t)\\
&  =[\alpha+1]_{q}[x_{\beta}(z)-x_{\beta}(a-\alpha-\beta)I(\alpha
)-[\alpha+\beta+1]_{q}I(\alpha+1).
\end{align*}
Since%
\begin{equation}%
%TCIMACRO{\dsum \limits_{t=a+1}^{z}}%
%BeginExpansion
{\displaystyle\sum\limits_{t=a+1}^{z}}
%EndExpansion
\nabla_{t}\{[x_{1}(t)-x_{1}(a)]^{(\alpha+1)}[x_{\beta}(z)-x_{\beta
}(t)]^{(\beta)}\}=0,
\end{equation}
therefore, we have prove that%

\begin{equation}
\frac{I(\alpha+1)}{I(\alpha)}=\frac{[\alpha+1]_{q}}{[\alpha+\beta+1]_{q}%
}[x_{\beta}(z)-x_{\beta}(a-\alpha-\beta)]. \label{imeq2}%
\end{equation}
From (\ref{imeq2}), one has%
\begin{equation}
\frac{I(\alpha+1)}{I(\alpha)}=\frac{\frac{[\Gamma(\alpha+2)]_{q}}%
{[\Gamma(\alpha+\beta+2)]_{q}}[x_{\beta}(z)-x_{\beta}(a)]^{(\alpha+\beta+1)}%
}{\frac{[\Gamma(\alpha+1)]_{q}}{[\Gamma(\alpha+\beta+1)]_{q}}[x_{\beta
}(z)-x_{\beta}(a)]^{(\alpha+\beta)}}.\nonumber
\end{equation}
So that we can set%

\begin{equation}
I(\alpha)=k\frac{[\Gamma(\alpha+1)]_{q}}{[\Gamma(\alpha+\beta+1)]_{q}%
}[x_{\beta}(z)-x_{\beta}(a)]^{(\alpha+\beta)},
\end{equation}
where $k$ is undetermined.

Set $\alpha=0,$ then%

\begin{equation}
I(0)=k\frac{1}{[\Gamma(\beta+1)]_{q}}[x_{\beta}(z)-x_{\beta}(a)]^{(\beta)},
\label{i01}%
\end{equation}
From (\ref{ialpha}), one has%

\begin{align}
I(0)  &  =\int_{a+1}^{z}[x_{\beta}(z)-x_{\beta}(t-1)]^{(\beta-1)}d_{\nabla
}x_{1}(t)\nonumber\\
&  =\frac{1}{[\beta]_{q}}[x_{\beta}(z)-x_{\beta}(a)]^{(\beta)}, \label{i02}%
\end{align}
From (\ref{i01}) and (\ref{i02}), one gets%

\[
k=\frac{[\Gamma(\beta+1)]_{q}}{[\beta]_{q}}=[\Gamma(\beta)]_{q}.
\]
Hence, we obtain that%

\begin{equation}
I(\alpha)=\frac{[\Gamma(\beta)]_{q}[\Gamma(\alpha+1)]_{q}}{[\Gamma
(\alpha+\beta+1)]_{q}}[x_{\beta}(z)-x_{\beta}(a)]^{(\alpha+\beta)},
\end{equation}
and the proof of \textbf{Theorem }\ref{eulerbeta} is completed.

\section{Generalized Abel Equation and Fractional Difference on Non-uniform
Lattices}

The definition of fractional difference of $f(z)$ on non-uniform lattices
$x_{\gamma}(s)$ seems more difficult and complicated. Our idea is to start by
solving the generalized Abel equation on non-uniform lattices. In detail, an
important question is: Let $m-1<\operatorname{Re}\alpha\leq m,f(z)$ over
$\{a+1,a+2,...,z\}$ be a known function, $g(z)$ over $\{a+1,a+2,...,z\}$ be an
unknown function, which satisfies the following generalized Abel equation
\begin{equation}
\nabla_{\gamma}^{-\alpha}g(z)=\int_{a+1}^{z}\frac{[x_{\gamma+\alpha
-1}(z)-x_{\gamma+\alpha-1}(t-1)]^{(\alpha-1)}}{[\Gamma(\alpha)]_{q}%
}g(t)d_{\nabla}x_{\gamma}(t)=f(t), \label{abel}%
\end{equation}
how to solve generalized Abel equation (\ref{abel})?

In order to solve equation (\ref{abel}), we should use the fundamental
analogue of Euler Beta \textbf{Theorem} \ref{eulerbeta} on non-uniform lattices.

\begin{theorem}
(Solution1 for Abel equation)\label{abelsol} Set functions $f(z)$ and $g(z)$
over $\{a+1,a+2,...,z\}$ satisfy%
\[
\nabla_{\gamma}^{-\alpha}g(z)=f(z),0<m-1<\operatorname{Re}\alpha\leqslant m,
\]
then
\begin{equation}
g(z)=\nabla_{\gamma}^{m}\nabla_{\gamma+\alpha}^{-m+\alpha}f(z)
\end{equation}
holds.
\end{theorem}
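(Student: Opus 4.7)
The overall plan is to apply the fractional sum operator $\nabla_{\gamma+\alpha}^{-(m-\alpha)}$ to both sides of the hypothesis $\nabla_{\gamma}^{-\alpha}g(z)=f(z)$ and to prove a semigroup-type identity $\nabla_{\gamma+\alpha}^{-(m-\alpha)}\nabla_{\gamma}^{-\alpha}=\nabla_{\gamma}^{-m}$, so that the composition collapses to the \emph{integer} $m$-fold sum of $g$. Once this is in hand, applying the integer difference $\nabla_{\gamma}^{m}$, which by the construction of the iterated integer sum in Section~2 is a left inverse of $\nabla_{\gamma}^{-m}$, yields $g(z)=\nabla_{\gamma}^{m}\nabla_{\gamma+\alpha}^{-(m-\alpha)}f(z)$, and this is exactly the claimed formula since $-m+\alpha=-(m-\alpha)$.

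To establish the semigroup identity, I would unfold both fractional sums using Definition~\ref{maindef1}, producing a nested double discrete sum, and then swap the order of summation via a finite Fubini step so that the outer variable $u$ (the argument of $g$) runs over $a+1\leq u\leq z$ while, for each $u$, the inner variable $t$ runs over $u\leq t\leq z$. The inner sum then takes the form
\begin{equation*}
\int_{u}^{z}[x_{\gamma+m-1}(z)-x_{\gamma+m-1}(t-1)]^{(m-\alpha-1)}[x_{\gamma+\alpha-1}(t)-x_{\gamma+\alpha-1}(u-1)]^{(\alpha-1)}d_{\nabla}x_{\gamma+\alpha}(t),
\end{equation*}
and the key step is to recognize this as an instance of the Euler Beta formula on non-uniform lattices (Theorem~\ref{eulerbeta}). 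Taking the base lattice to be $\tilde{x}:=x_{\gamma+\alpha-1}$, so that $\tilde{x}_{m-\alpha}=x_{\gamma+m-1}$ and $\tilde{x}_{1}=x_{\gamma+\alpha}$, and replacing the ``$a$'' of the Beta formula by $u-1$, Theorem~\ref{eulerbeta} with parameters $(\alpha_{\mathrm{Beta}},\beta_{\mathrm{Beta}})=(\alpha-1,\,m-\alpha)$ evaluates the inner sum in closed form as a constant multiple of $[x_{\gamma+m-1}(z)-x_{\gamma+m-1}(u-1)]^{(m-1)}$. Substituting back and comparing with Definition~\ref{maindef1} at integer order $m$ yields $\nabla_{\gamma+\alpha}^{-(m-\alpha)}f=\nabla_{\gamma}^{-m}g$.

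The main technical obstacle is the invocation of Theorem~\ref{eulerbeta} with a shifted base lattice and a shifted lower endpoint, since the theorem is stated in terms of $x=x_{0}$ and the fixed lower limit $a+1$. This is not a genuine obstruction: each $x_{\nu}$ is itself a non-uniform lattice satisfying Definition~\ref{def1}, and the proof of Theorem~\ref{eulerbeta} proceeds purely by telescoping the finite sum of $\nabla_{t}\{[x_{1}(t)-x_{1}(a)]^{(\alpha+1)}[x_{\beta}(z)-x_{\beta}(t)]^{(\beta)}\}$, so it is insensitive to the choice of starting index and to a uniform shift of the lattice parameter; hence the Beta formula applies verbatim in the shifted form required here. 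Once this is verified, the remaining pieces — the finite Fubini swap, matching Definition~\ref{maindef1} at integer order $m$, and the inversion $\nabla_{\gamma}^{m}\nabla_{\gamma}^{-m}=I$ — are routine consequences of the material already developed in Sections~2 and~3.
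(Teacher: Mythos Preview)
Your proposal is correct and follows essentially the same approach as the paper's proof: apply $\nabla_{\gamma+\alpha}^{-(m-\alpha)}$ to the hypothesis, interchange the order of summation, invoke Theorem~\ref{eulerbeta} with the shifted lattice $x_{\gamma+\alpha-1}$, lower endpoint $s$ in place of $a+1$, and parameters $(\alpha-1,\,m-\alpha)$ to collapse the inner sum, and then cancel $\nabla_{\gamma}^{m}\nabla_{\gamma}^{-m}$. Your explicit remark that the Beta formula is insensitive to a lattice shift and a change of lower endpoint is exactly the justification the paper tacitly uses when it writes ``replacing $a+1$ with $s$ \ldots\ and replacing $x(t)$ with $x_{\nu+\alpha-1}(t)$.''
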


\begin{proof}
We only need to prove that%

\[
\nabla_{\gamma}^{-m}g(z)=\nabla_{\gamma+\alpha}^{-(m-\alpha)}f(z).
\]
That is
\[
\nabla_{\gamma+\alpha}^{-(m-\alpha)}f(z)=\nabla_{\gamma+\alpha}^{-(m-\alpha
)}\nabla_{\gamma}^{-\alpha}g(z)=\nabla_{\gamma}^{-m}g(z).
\]
In fact, by \textbf{Definition \ref{maindef1},} we have
\begin{align*}
\nabla_{\gamma+\alpha}^{-(m-\alpha)}f(z)  &  =\int_{a+1}^{z}\frac
{[x_{\gamma+m-1}(z)-x_{\gamma+m-1}(t-1)]^{(m-\alpha-1)}}{[\Gamma
(m-\alpha)]_{q}}f(t)d_{\nabla}{x_{\gamma+\alpha}}(t)\\
&  =\int_{a+1}^{z}\frac{[x_{\gamma+m-1}(z)-x_{\gamma+m-1}(t-1)]^{(m-\alpha
-1)}}{[\Gamma(m-\alpha)]_{q}}d_{\nabla}{x_{\gamma+\alpha}}(t)\\
&  \cdot\int_{a+1}^{z}\frac{[x_{\gamma+\alpha-1}(t)-x_{\gamma+\alpha
-1}(s-1)]^{(\alpha-1)}}{[\Gamma(\alpha)]_{q}}g(s)d_{\nabla}{x_{\gamma}}(s)\\
&  =\int_{a+1}^{z}g(s)\nabla{x_{\gamma}}(s)\int_{s}^{z}\frac{[x_{\gamma
+m-1}(z)-x_{\gamma+m-1}(t-1)]^{(m-\alpha-1)}}{[\Gamma(m-\alpha)]_{q}}\\
&  \cdot\frac{\lbrack x_{\gamma+\alpha-1}(t)-x_{\gamma+\alpha-1}%
(s-1)]^{(\alpha-1)}}{[\Gamma(\alpha)]_{q}}d_{\nabla}{x_{\gamma+\alpha}}(t).
\end{align*}
In \textbf{Theorem \ref{eulerbeta}, }replacing $a+1$ with $s;\alpha$ with
$\alpha-1;\beta$ with $m-\alpha,$ and replacing $x(t)$ with $x_{\nu+\alpha
-1}(t),$ then $x_{\beta}(t)$ with $x_{\nu+m-1}(t),$ we can obtain the
following equality%
\begin{align*}
&  \int_{s}^{z}\frac{[x_{\gamma+m-1}(z)-x_{\gamma+m-1}(t-1)]^{(m-\alpha-1)}%
}{[\Gamma(m-\alpha)]_{q}}\frac{[x_{\gamma+\alpha-1}(t)-x_{\gamma+\alpha
-1}(s-1)]^{(\alpha-1)}}{[\Gamma(\alpha)]_{q}}d_{\nabla}{x_{\gamma+\alpha}%
}(t)\\
&  =\frac{[x_{\gamma+m-1}(z)-x_{\gamma+m-1}(s-1)]^{(-m-1)}}{[\Gamma(m)]_{q}},
\end{align*}
therefore, we have%
\[
\nabla_{\gamma+\alpha}^{-(m-\alpha)}f(z)=\int_{a+1}^{z}\frac{[x_{\gamma
+m-1}(z)-x_{\gamma+m-1}(s-1)]^{(-m-1)}}{[\Gamma(m)]_{q}}g(s)d_{\nabla
}{x_{\gamma}}(s)=\nabla_{\gamma}^{-m}g(z),
\]
which yields%

\[
\nabla_{\gamma}^{m}\nabla_{\gamma+\alpha}^{-(m-\alpha)}f(z)=\nabla_{\gamma
}^{m}\nabla_{\gamma}^{-m}g(z)=g(z).
\]

\end{proof}

Inspired by \textbf{Theorem} \ref{abelsol}, This is natural that we give the
$\alpha$-th order $(0<m-1<\operatorname{Re}\alpha\leq m)$ Riemann-Liouvile
difference of $f(z)$ as follows:

\begin{definition}
(Riemann-Liouvile fractional defference1)\label{maindef2} Let $m$ be the
smallest integer exceeding $\operatorname{Re}\alpha$, $\alpha$-th order
Riemann-Liouvile difference of $f(z)$ over $\{a+1,a+2,...,z\}$ on non-uniform
lattices is defined by
\end{definition}

\begin{equation}
\nabla_{\gamma}^{\alpha}f(z)=\nabla_{\gamma}^{m}(\nabla_{\gamma+\alpha
}^{\alpha-m}f(z)). \label{adiff1}%
\end{equation}

Formally, in \textbf{Definition \ref{maindef1}}, if $\alpha$ is replaced by
$-\alpha$, then the RHS of (\ref{asum}) become%

\begin{align}
&  \int_{a+1}^{z}{\frac{{{{[{x_{\gamma-\alpha-1}}(z)-{x_{\gamma-\alpha-1}%
}(t-1)]}^{(-\alpha-1)}}}}{[{\Gamma(-\alpha)]}_{q}}}f(t)d_{\nabla}{x_{\gamma}%
}(t)\nonumber\\
&  =\frac{\nabla}{\nabla{x_{\gamma-\alpha}(t)}}(\frac{\nabla}{\nabla
{x_{\gamma-\alpha+1}(t)}}...\frac{\nabla}{\nabla{x_{\gamma-\alpha+n-1}(t)}%
})\nonumber\\
&  \cdot\int_{a+1}^{z}{\frac{{{{[{x_{\gamma+n-\alpha-1}}(z)-{x_{\gamma
+n-\alpha-1}}(t-1)]}^{(n-\alpha-1)}}}}{[{\Gamma(n-\alpha)]}_{q}}}%
f(t)d_{\nabla}{x_{\gamma}}(t)\\
&  =\nabla_{\gamma-\alpha}^{n}\nabla_{\gamma}^{-n+\alpha}f(z)=\nabla
_{\gamma-\alpha}^{\alpha}f(z). \label{adiff2}%
\end{align}

From (\ref{adiff2}), we can also obtain $\alpha$-th order difference of $f(z)$
as follows

\begin{definition}
(Riemann-Liouvile fractional defference2)\label{maindef3} Let
$\operatorname{Re}\alpha>0,$ $\alpha$-th order Riemann-Liouvile difference of
$f(z)$ over $\{a+1,a+2,...,z\}$ on non-uniform lattices can be defined by%
\begin{equation}
\nabla_{\gamma-\alpha}^{\alpha}f(z)=\int_{a+1}^{z}{\frac{{{{[{x_{\gamma
-\alpha-1}}(z)-{x_{\gamma-\alpha-1}}(t-1)]}^{(-\alpha-1)}}}}{[{\Gamma
(-\alpha)]}_{q}}}f(t)d_{\nabla}{x_{\gamma}}(t). \label{adiff3}%
\end{equation}
Replacing ${x_{\gamma-\alpha}(t)}$ with ${x_{\gamma}}(t),$ Then%
\begin{equation}
\nabla_{\gamma}^{\alpha}f(z)=\int_{a+1}^{z}{\frac{{{{[{x_{\gamma-1}%
}(z)-{x_{\gamma-1}}(t-1)]}^{(-\alpha-1)}}}}{[{\Gamma(-\alpha)]}_{q[}}%
}f(t)d_{\nabla}{x_{\gamma+\alpha}}(t), \label{adiff4}%
\end{equation}
where $\alpha\notin N.$
\end{definition}

\section{Caputo fractional Difference on Non-uniform Lattices}

In this section, we give suitable definition of Caputo fractional difference
on non-uniform lattices.

\begin{theorem}
(Sum by parts formula) \textit{Given two functions} Let $f(s),g(s)$
\textit{with complex variable} $s$, then%
\[
\int_{a+1}^{z}g(s)\nabla_{\gamma}f(s)d_{\nabla}x_{\gamma}%
(s)=f(z)g(z)-f(a)g(a)-\int_{a+1}^{z}f(s-1)\nabla_{\gamma}g(s)d_{\nabla
}x_{\gamma}(s),
\]
where $z,a\in C,$ and $z-a\in N.$
\end{theorem}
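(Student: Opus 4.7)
The plan is to derive this sum-by-parts identity as a direct consequence of two tools already at our disposal: the discrete Leibniz rule for the backward difference operator $\nabla_{\nu}$ from Proposition \ref{pro}, and the fundamental-theorem-of-calculus-type identity $\int_{a+1}^{z}\nabla_{\gamma}F(s)\,d_{\nabla}x_{\gamma}(s)=F(z)-F(a)$ from Proposition \ref{integration}. The identity should fall out in essentially one line once these two ingredients are combined, so the main task is simply to organize the computation cleanly.

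First, I would apply the product-rule form of the Leibniz identity from Proposition \ref{pro}, namely
\[
\nabla_{\gamma}\bigl(f(s)g(s)\bigr)=f(s-1)\nabla_{\gamma}g(s)+g(s)\nabla_{\gamma}f(s),
\]
and then integrate (i.e., sum against $d_{\nabla}x_{\gamma}(s)$) both sides from $a+1$ to $z$. By part (2) of Proposition \ref{integration} applied to $F(s)=f(s)g(s)$, the left-hand side collapses to $f(z)g(z)-f(a)g(a)$, giving
\[
f(z)g(z)-f(a)g(a)=\int_{a+1}^{z}f(s-1)\nabla_{\gamma}g(s)\,d_{\nabla}x_{\gamma}(s)+\int_{a+1}^{z}g(s)\nabla_{\gamma}f(s)\,d_{\nabla}x_{\gamma}(s).
\]
Solving for the second integral on the right yields exactly the stated formula.

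There is no real obstacle here; the only thing to be careful about is choosing the correct form of the Leibniz rule (the one with $f(s-1)$ rather than $g(s-1)$), since Proposition \ref{pro} gives both symmetrizations and only one of them produces the precise asymmetric statement in the theorem. One should also note that the hypothesis $z-a\in\mathbb{N}$ is what makes the telescoping sum underlying Proposition \ref{integration} finite and well-defined, so no analytic subtleties enter. In particular, the proof is purely algebraic and requires no special properties of the non-uniform lattice $x_{\gamma}(s)$ beyond the definitions already in place.
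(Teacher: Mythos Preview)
Your proposal is correct and follows essentially the same approach as the paper's proof: apply the Leibniz rule $\nabla_{\gamma}(f(s)g(s))=f(s-1)\nabla_{\gamma}g(s)+g(s)\nabla_{\gamma}f(s)$ from Proposition \ref{pro}, then sum from $a+1$ to $z$ and use Proposition \ref{integration} to collapse the left-hand side to $f(z)g(z)-f(a)g(a)$. The only cosmetic difference is that the paper first isolates $g(s)\nabla_{\gamma}f(s)$ before summing, whereas you sum first and then rearrange; the content is identical.
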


\begin{proof}
Make use of \textbf{Proposition} \ref{pro}, one has%
\[
g(s)\nabla_{\gamma}f(s)=\nabla_{\gamma}[f(z)g(z)]-f(s-1)\nabla_{\gamma}g(s),
\]
it yields
\[
g(s)\nabla_{\gamma}f(s)\nabla x_{\gamma}(s)=\nabla_{\gamma}[f(z)g(z)]\nabla
x_{\gamma}(s)-f(s-1)\nabla_{\gamma}g(s)\nabla x_{\gamma}(s).
\]
Summing from $a+1$ to $z$ with vatiable $s,$ then we get%
\begin{align*}
\int_{a+1}^{z}g(s)\nabla_{\gamma}f(s)d_{\nabla}x_{\gamma}(s)  &  =\int%
_{a+1}^{z}\nabla_{\gamma}[f(z)g(z)]\nabla x_{\gamma}(s)-\int_{a+1}%
^{z}f(s-1)\nabla_{\gamma}g(s)d_{\nabla}x_{\gamma}(s)\\
&  =f(z)g(z)-f(a)g(a)-\int_{a+1}^{z}f(s-1)\nabla_{\gamma}g(s)d_{\nabla
}x_{\gamma}(s).
\end{align*}

\end{proof}

The idea of the definition of Caputo fractional difference on non-uniform
lattices is also inspired by the the solution of generalized Abel equation
(\ref{abel}). In section 4, we have obtained that the solution of the
generalized Abel equation%
\[
\nabla_{\gamma}^{-\alpha}g(z)=f(z),0<m-1<\alpha\leqslant m,
\]
is
\begin{equation}
g(z)=\nabla_{\gamma}^{\alpha}f(z)=\nabla_{\gamma}^{m}\nabla_{\gamma+\alpha
}^{-m+\alpha}f(z). \label{RL}%
\end{equation}
Now we will give a new expression of (\ref{RL}) by parts formula. In fact, we
have
\begin{align}
\nabla_{\gamma}^{\alpha}f(z)  &  =\nabla_{\gamma}^{m}\nabla_{\gamma+\alpha
}^{-m+\alpha}f(z)\nonumber\\
&  =\nabla_{\gamma}^{m}\int_{a+1}^{z}\frac{[x_{\gamma+m-1}(z)-x_{\gamma
+m-1}(s-1)]^{(m-\alpha-1)}}{[\Gamma(m-\alpha)]_{q}}f(s)d_{\nabla}%
x_{\gamma+\alpha}(s). \label{RL1}%
\end{align}

In view of the identity%

\begin{align*}
\frac{\nabla_{(s)}[x_{\gamma+m-1}(z)-x_{\gamma+m-1}(s)]^{(m-\alpha)}}{\nabla
x_{\gamma+\alpha}(s)}  &  =\frac{\Delta_{(s)}[x_{\gamma+m-1}(z)-x_{\gamma
+m-1}(s-1)]^{(m-\alpha)}}{\Delta x_{\gamma+\alpha}(s-1)}\\
&  =-[m-\alpha]_{q}[x_{\gamma+m-1}(z)-x_{\gamma+m-1}(s-1)]^{(m-\alpha-1)},
\end{align*}
then the expression
\[
\int_{a+1}^{z}\frac{[x_{\gamma+m-1}(z)-x_{\gamma+m-1}(s-1)]^{(m-\alpha-1)}%
}{[\Gamma(m-\alpha)]_{q}}f(s)d_{\nabla}x_{\gamma+\alpha}(s)
\]
can be written as%
\begin{align*}
&  \int_{a+1}^{z}f(s)\nabla_{(s)}\{\frac{-[x_{\gamma+m-1}(z)-x_{\gamma
+m-1}(s)]^{(m-\alpha)}}{[\Gamma(m-\alpha+1)]_{q}}\}d_{\nabla}s\\
&  =\int_{a+1}^{z}f(s)\nabla_{\gamma+\alpha-1}\{\frac{-[x_{\gamma
+m-1}(z)-x_{\gamma+m-1}(s)]^{(m-\alpha)}}{[\Gamma(m-\alpha+1)]_{q}}%
\}d_{\nabla}x_{\gamma+\alpha-1}(s).
\end{align*}
Summing by parts formula, we get%
\begin{align*}
&  \int_{a+1}^{z}f(s)\nabla_{\gamma+\alpha-1}\{\frac{-[x_{\gamma
+m-1}(z)-x_{\gamma+m-1}(s)]^{(m-\alpha)}}{[\Gamma(m-\alpha+1)]_{q}}%
\}d_{\nabla}x_{\gamma+\alpha-1}(s)\\
&  =f(a)\frac{[x_{\gamma+m-1}(z)-x_{\gamma+m-1}(a)]^{(m-\alpha)}}%
{[\Gamma(m-\alpha+1)]_{q}}\\
&  +\int_{a+1}^{z}\frac{[x_{\gamma+m-1}(z)-x_{\gamma+m-1}(s-1)]^{(m-\alpha)}%
}{[\Gamma(m-\alpha+1)]_{q}}\nabla_{\gamma+\alpha-1}[f(s)]d_{\nabla}%
x_{\gamma+\alpha-1}(s).
\end{align*}
Therefore, it reduce to%
\begin{align}
&  \int_{a+1}^{z}\frac{[x_{\gamma+m-1}(z)-x_{\gamma+m-1}(s-1)]^{(m-\alpha-1)}%
}{[\Gamma(m-\alpha)]_{q}}f(s)d_{\nabla}x_{\gamma+\alpha}(s)\label{s1}\\
&  =f(a)\frac{[x_{\gamma+m-1}(z)-x_{\gamma+m-1}(a)]^{(m-\alpha)}}%
{[\Gamma(m-\alpha+1)]_{q}}\nonumber\\
&  +\int_{a+1}^{z}\frac{[x_{\gamma+m-1}(z)-x_{\gamma+m-1}(s-1)]^{(m-\alpha)}%
}{[\Gamma(m-\alpha+1)]_{q}}\nabla_{\gamma+\alpha-1}[f(s)]d_{\nabla}%
x_{\gamma+\alpha-1}(s).\nonumber
\end{align}
Further, consider
\begin{equation}
\int_{a+1}^{z}\frac{[x_{\gamma+m-1}(z)-x_{\gamma+m-1}(s-1)]^{(m-\alpha)}%
}{[\Gamma(m-\alpha+1)]_{q}}\nabla_{\gamma+\alpha-1}[f(s)]d_{\nabla}%
x_{\gamma+\alpha-1}(s). \label{exp2}%
\end{equation}
By the use of the identity%
\begin{align*}
\frac{\nabla_{(s)}[x_{\gamma+m-1}(z)-x_{\gamma+m-1}(s)]^{(m-\alpha+1)}}{\nabla
x_{\gamma+\alpha-1}(s)}  &  =\frac{\Delta_{(s)}[x_{\gamma+m-1}(z)-x_{\gamma
+m-1}(s-1)]^{(m-\alpha+1)}}{\Delta x_{\gamma+\alpha-1}(s-1)}\\
&  =-[m-\alpha+1]_{q}[x_{\gamma+m-1}(z)-x_{\gamma+m-1}(s-1)]^{(m-\alpha)},
\end{align*}
the expression (\ref{exp2}) can be written as%
\begin{align*}
&  \int_{a+1}^{z}\nabla_{\gamma+\alpha-1}[f(s)]\nabla_{(s)}\{\frac
{-[x_{\gamma+m-1}(z)-x_{\gamma+m-1}(s-1)]^{(m-\alpha+1)}}{[\Gamma
(m-\alpha+2)]_{q}}\}d_{\nabla}s\\
&  =\int_{a+1}^{z}\nabla_{\gamma+\alpha-1}[f(s)]\nabla_{\gamma+\alpha
-2}\{\frac{-[x_{\gamma+m-1}(z)-x_{\gamma+m-1}(s-1)]^{(m-\alpha+1)}}%
{[\Gamma(m-\alpha+2)]_{q}}\}d_{\nabla}x_{\gamma+\alpha-2}(s).
\end{align*}
Summing by parts formula, we have%
\begin{align*}
&  \int_{a+1}^{z}\nabla_{\gamma+\alpha-1}[f(s)]\nabla_{\gamma+\alpha-2}%
\{\frac{-[x_{\gamma+m-1}(z)-x_{\gamma+m-1}(s-1)]^{(m-\alpha+1)}}%
{[\Gamma(m-\alpha+2)]_{q}}\}d_{\nabla}x_{\gamma+\alpha-2}(s)\\
&  =\nabla_{\gamma+\alpha-1}f(a)\frac{[x_{\gamma+m-1}(z)-x_{\gamma
+m-1}(a)]^{(m-\alpha+1)}}{[\Gamma(m-\alpha+2)]_{q}}+\\
&  +\int_{a+1}^{z}\frac{[x_{\gamma+m-1}(z)-x_{\gamma+m-1}(s-1)]^{(m-\alpha
+1)}}{[\Gamma(m-\alpha+2)]_{q}}[\nabla_{\gamma+\alpha-2}\nabla_{\gamma
+\alpha-1}]f(s)d_{\nabla}x_{\gamma+\alpha-2}(s)\\
&  =\nabla_{\gamma+\alpha-1}f(a)\frac{[x_{\gamma+m-1}(z)-x_{\gamma
+m-1}(a)]^{(m-\alpha+1)}}{[\Gamma(m-\alpha+2)]_{q}}+\\
&  +\int_{a+1}^{z}\frac{[x_{\gamma+m-1}(z)-x_{\gamma+m-1}(s-1)]^{(m-\alpha
+1)}}{[\Gamma(m-\alpha+2)]_{q}}\nabla_{\gamma+\alpha-2}^{2}f(s)d_{\nabla
}x_{\gamma+\alpha-2}(s)
\end{align*}
Therefore, we conclude that%
\begin{align}
&  \int_{a+1}^{z}\frac{[x_{\gamma+m-1}(z)-x_{\gamma+m-1}(s-1)]^{(m-\alpha)}%
}{[\Gamma(m-\alpha+1)]_{q}}\nabla_{\gamma+\alpha-1}[f(s)]d_{\nabla}%
x_{\gamma+\alpha-1}(s)\nonumber\\
&  =\nabla_{\gamma+\alpha-1}f(a)\frac{[x_{\gamma+m-1}(z)-x_{\gamma
+m-1}(a)]^{(m-\alpha+1)}}{[\Gamma(m-\alpha+2)]_{q}}+\nonumber\\
&  +\int_{a+1}^{z}\frac{[x_{\gamma+m-1}(z)-x_{\gamma+m-1}(s-1)]^{(m-\alpha
+1)}}{[\Gamma(m-\alpha+2)]_{q}}\nabla_{\gamma+\alpha-2}^{2}f(s)d_{\nabla
}x_{\gamma+\alpha-2}(s) \label{s2}%
\end{align}
In the same way, by mathematical induction we can obtain%
\begin{align}
&  \int_{a+1}^{z}\frac{[x_{\gamma+m-1}(z)-x_{\gamma+m-1}(s-1)]^{(m-\alpha
+k-1)}}{[\Gamma(m-\alpha+k)]_{q}}\nabla_{\gamma+\alpha-k}^{k}[f(s)]d_{\nabla
}x_{\gamma+\alpha-k}(s)\nonumber\\
&  =\nabla_{\gamma+\alpha-k}^{k}f(a)\frac{[x_{\gamma+m-1}(z)-x_{\gamma
+m-1}(a)]^{(m-\alpha+k)}}{[\Gamma(m-\alpha+k+1)]_{q}}+\nonumber\\
&  +\int_{a+1}^{z}\frac{[x_{\gamma+m-1}(z)-x_{\gamma+m-1}(s-1)]^{(m-\alpha
+k)}}{[\Gamma(m-\alpha+k+1)]_{q}}\nabla_{\gamma+\alpha-(k+1)}^{k+1}%
f(s)d_{\nabla}x_{\gamma+\alpha-(k+1)}(s).\label{s3}\\
(k  &  =0,1,...,m-1)\nonumber
\end{align}
Substituting (\ref{s1}), (\ref{s2}) and (\ref{s3}) into (\ref{RL1}), we get%
\begin{align*}
\nabla_{\gamma}^{\alpha}f(z)  &  =\nabla_{\gamma}^{m}\{f(a)\frac
{[x_{\gamma+m-1}(z)-x_{\gamma+m-1}(a)]^{(m-\alpha)}}{[\Gamma(m-\alpha+1)]_{q}%
}+\\
&  +\nabla_{\gamma+\alpha-1}f(a)\frac{[x_{\gamma+m-1}(z)-x_{\gamma
+m-1}(a)]^{(m-\alpha+1)}}{[\Gamma(m-\alpha+2)]_{q}}+\\
&  +\nabla_{\gamma+\alpha-k}^{k}f(a)\frac{[x_{\gamma+m-1}(z)-x_{\gamma
+m-1}(a)]^{[m-\alpha+k]}}{[\Gamma(m-\alpha+k+1)]_{q}}\\
&  +...+\nabla_{\gamma+\alpha-(m-1)}^{m-1}f(a)\frac{[x_{\gamma+m-1}%
(z)-x_{\gamma+m-1}(a)]^{(2m-\alpha-1)}}{[\Gamma(2m-\alpha)]_{q}}+\\
&  +\int_{a+1}^{z}\frac{[x_{\gamma+m-1}(z)-x_{\gamma+m-1}(s-1)]^{(2m-\alpha
-1)}}{[\Gamma(2m-\alpha)]_{q}}\nabla_{\gamma+\alpha-m}^{m}f(s)d_{\nabla
}x_{\gamma+\alpha-m}(s)\}\\
&  =\nabla_{\gamma}^{m}\{\sum_{k=0}^{m-1}\nabla_{\gamma+\alpha-k}^{k}%
f(a)\frac{[x_{\gamma+m-1}(z)-x_{\gamma+m-1}(a)]^{(m-\alpha+k)}}{[\Gamma
(m-\alpha+k+1)]_{q}}+\\
&  +\nabla_{\gamma+\alpha-m}^{\alpha-2m}\nabla_{\gamma+\alpha-m}^{m}f(z)\}\\
&  =\sum_{k=0}^{m-1}\nabla_{\gamma+\alpha-k}^{k}f(a)\frac{[x_{\gamma
-1}(z)-x_{\gamma-1}(a)]^{(-\alpha+k)}}{[\Gamma(-\alpha+k+1)]_{q}}%
+\nabla_{\gamma+\alpha-m}^{\alpha-m}\nabla_{\gamma+\alpha-m}^{m}f(z).
\end{align*}

As a result, we have the following

\begin{theorem}
(Solution2 for Abel equation)\label{abelsol2} Set functions $f(z)$ and $g(z)$
over $\{a+1,a+2,...,z\}$ satisfy%
\[
\nabla_{\gamma}^{-\alpha}g(z)=f(z),0<m-1<\operatorname{Re}\alpha\leqslant m,
\]
then
\begin{equation}
g(z)=\sum_{k=0}^{m-1}\nabla_{\gamma+\alpha-k}^{k}f(a)\frac{[x_{\gamma
-1}(z)-x_{\gamma-1}(a)]^{(-\alpha+k)}}{[\Gamma(-\alpha+k+1)]_{q}}%
+\nabla_{\gamma+\alpha-m}^{\alpha-m}\nabla_{\gamma+\alpha-m}^{m}f(z)
\label{c1}%
\end{equation}
holds.
\end{theorem}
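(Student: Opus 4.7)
The plan is to transform the Riemann--Liouville-type representation $g(z)=\nabla_{\gamma}^{m}\nabla_{\gamma+\alpha}^{\alpha-m}f(z)$ already established in Theorem \ref{abelsol} into the Caputo-like expansion (\ref{c1}) by iterated summation by parts. Starting from Definition \ref{maindef1}, the inner fractional sum $\nabla_{\gamma+\alpha}^{\alpha-m}f(z)$ is a discrete integral of $f(s)$ against the kernel $[x_{\gamma+m-1}(z)-x_{\gamma+m-1}(s-1)]^{(m-\alpha-1)}/[\Gamma(m-\alpha)]_{q}$ with respect to $d_{\nabla}x_{\gamma+\alpha}(s)$. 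The objective is to transfer all $m$ backward difference operators from this kernel onto $f$, generating the $m$ boundary terms that form the finite sum in (\ref{c1}).

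The essential analytic tool is the shift identity from Proposition \ref{properties}, which lets one exhibit the kernel as a $\nabla$-derivative of a generalized power of one degree higher. Concretely, using
\[
\frac{\nabla_{(s)}[x_{\gamma+m-1}(z)-x_{\gamma+m-1}(s)]^{(m-\alpha)}}{\nabla x_{\gamma+\alpha}(s)}=-[m-\alpha]_{q}\,[x_{\gamma+m-1}(z)-x_{\gamma+m-1}(s-1)]^{(m-\alpha-1)},
\]
the integrand becomes $f(s)\,\nabla_{\gamma+\alpha-1}(\cdot)$, to which the summation by parts formula of Section~5 applies. This produces the boundary contribution $f(a)\,[x_{\gamma+m-1}(z)-x_{\gamma+m-1}(a)]^{(m-\alpha)}/[\Gamma(m-\alpha+1)]_{q}$ and a new integral whose kernel has order $m-\alpha$ and whose integrand is $\nabla_{\gamma+\alpha-1}f(s)$ paired with $d_{\nabla}x_{\gamma+\alpha-1}(s)$; this is precisely the transition from (\ref{s1}) to (\ref{s2}).

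One then iterates by induction on $k=0,1,\ldots,m-1$, at each step using the analogous shift identity with $m-\alpha$ replaced by $m-\alpha+k$ together with the sum-by-parts theorem, yielding the general step (\ref{s3}). After $m$ iterations, $m$ boundary terms of the form $\nabla_{\gamma+\alpha-k}^{k}f(a)\,[x_{\gamma+m-1}(z)-x_{\gamma+m-1}(a)]^{(m-\alpha+k)}/[\Gamma(m-\alpha+k+1)]_{q}$ have accumulated, and the residual integral is recognizable via Definition \ref{maindef1} as $\nabla_{\gamma+\alpha-m}^{\alpha-2m}\nabla_{\gamma+\alpha-m}^{m}f(z)$. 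Finally one applies the outer operator $\nabla_{\gamma}^{m}$: by $m$-fold application of the differentiation rule $\frac{\Delta_{z}}{\Delta x_{\nu-\mu+1}(z)}[x_{\nu}(s)-x_{\nu}(z)]^{(\mu)}=-[\mu]_{q}[x_{\nu}(s)-x_{\nu}(z)]^{(\mu-1)}$ from Proposition \ref{properties}, each boundary power $[x_{\gamma+m-1}(z)-x_{\gamma+m-1}(a)]^{(m-\alpha+k)}$ collapses (with matching $q$-Pochhammer constants canceling the factor $[\Gamma(m-\alpha+k+1)]_{q}/[\Gamma(-\alpha+k+1)]_{q}$) to $[x_{\gamma-1}(z)-x_{\gamma-1}(a)]^{(-\alpha+k)}$, and the residual term becomes $\nabla_{\gamma+\alpha-m}^{\alpha-m}\nabla_{\gamma+\alpha-m}^{m}f(z)$, reproducing (\ref{c1}).

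The main obstacle is the simultaneous tracking of three indices at every step: the subscript $\gamma+\alpha-k$ on the difference operator, the lattice $x_{\gamma+m-1}$ on which the shrinking generalized power lives, and the measure $d_{\nabla}x_{\gamma+\alpha-k}(s)$ against which the sum is taken. These have to line up so that the shift identities of Proposition \ref{properties} apply with exactly the indices as stated; verifying this at the inductive step, and checking that the outer $\nabla_{\gamma}^{m}$ shifts the lattice label from $\gamma+m-1$ down to $\gamma-1$ while reducing each generalized power's degree by $m$, is where almost all the technical content sits. Once this bookkeeping is in place, the calculation is algorithmic and yields (\ref{c1}).
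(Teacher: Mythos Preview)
Your proposal is correct and follows essentially the same route as the paper: start from the Riemann--Liouville form of Theorem~\ref{abelsol}, rewrite the kernel via the shift identity of Proposition~\ref{properties}, iterate summation by parts $m$ times to peel off the boundary terms (\ref{s1})--(\ref{s3}), and then apply the outer $\nabla_{\gamma}^{m}$ to reduce each generalized power by $m$ in degree and the lattice index from $\gamma+m-1$ to $\gamma-1$. The paper carries out exactly this computation in the passage preceding the theorem statement, with the same bookkeeping of the three shifting indices that you flag as the main technical point.
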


Inspired by \textbf{Theorem} \ref{abelsol2}, This is also natural that we give
the $\alpha$-th order $(0<m<\operatorname{Re}\alpha\leq m-1)$ Caputo
fractional difference of $f(z)$ as follows:

\begin{definition}
(Caputo fractional difference)\label{caputodef}Let $m$ be the smallest integer
exceeding $\operatorname{Re}\alpha$, $\alpha$-th order Caputo fractional
difference of $f(z)$ over $\{a+1,a+2,...,z\}$ on non-uniform lattices is
defined by%
\begin{equation}
^{C}\nabla_{\gamma}^{\alpha}f(z)=\nabla_{\gamma+\alpha-m}^{\alpha-m}%
\nabla_{\gamma+\alpha-m}^{m}f(z).
\end{equation}

\end{definition}

\section{Some Propositions and Theorems}

Some fundamental Propositions, Theorems and Taylor fomula on non-uniform
lattices are very important. We will take effort to establish it in this
section. First, It is easy to prove that

\begin{lemma}
\label{lem22}Let $\alpha>0,$ then
\begin{equation}
\nabla_{\gamma}^{-\alpha}1=\frac{[x_{\gamma+\alpha-1}(z)-x_{\gamma+\alpha
-1}(a)]^{(\alpha)}}{[\Gamma(\alpha+1)]_{q}}.\nonumber
\end{equation}

\end{lemma}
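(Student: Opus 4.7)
The plan is to evaluate the fractional sum $\nabla_{\gamma}^{-\alpha} 1$ directly from Definition \ref{maindef1} by exhibiting an explicit antiderivative of the kernel with respect to the lattice $x_{\gamma}$, and then applying the fundamental theorem of summation (Proposition \ref{integration}(2)).

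First I would substitute $f \equiv 1$ into Definition \ref{maindef1}, reducing the problem to showing
\[
\int_{a+1}^{z}[x_{\gamma+\alpha-1}(z)-x_{\gamma+\alpha-1}(t-1)]^{(\alpha-1)}\,d_{\nabla}x_{\gamma}(t)=\frac{[x_{\gamma+\alpha-1}(z)-x_{\gamma+\alpha-1}(a)]^{(\alpha)}}{[\alpha]_{q}},
\]
since then dividing by $[\Gamma(\alpha)]_q$ and using $[\Gamma(\alpha+1)]_{q}=[\alpha]_{q}[\Gamma(\alpha)]_{q}$ will yield the claim.

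Next I would produce the antiderivative by specializing the third identity in Proposition \ref{properties}, which gives
\[
\frac{\Delta_{s}}{\Delta x_{\nu-\mu+1}(s)}[x_{\nu}(z)-x_{\nu}(s)]^{(\mu)}=-[\mu]_{q}[x_{\nu}(z)-x_{\nu}(s)]^{(\mu-1)}.
\]
Setting $\nu=\gamma+\alpha-1$ and $\mu=\alpha$ (so $\nu-\mu+1=\gamma$) and then using the elementary shift $\nabla_{t}g(t)/\nabla x_{\gamma}(t)=\Delta_{s}g(s)/\Delta x_{\gamma}(s)\bigl|_{s=t-1}$, I obtain
\[
\nabla_{\gamma}\!\left\{-\frac{[x_{\gamma+\alpha-1}(z)-x_{\gamma+\alpha-1}(t)]^{(\alpha)}}{[\alpha]_{q}}\right\}=[x_{\gamma+\alpha-1}(z)-x_{\gamma+\alpha-1}(t-1)]^{(\alpha-1)}.
\]
Thus the kernel is the $\nabla_{\gamma}$-derivative of an explicit function of $t$.

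Finally I would apply Proposition \ref{integration}(2) to collapse the sum to boundary values of $F(t)=-[x_{\gamma+\alpha-1}(z)-x_{\gamma+\alpha-1}(t)]^{(\alpha)}/[\alpha]_{q}$. At the upper endpoint $t=z$ the generalized power $[x_{\gamma+\alpha-1}(z)-x_{\gamma+\alpha-1}(z)]^{(\alpha)}$ vanishes (inspection of Definition \ref{power} shows a $\Gamma(0)$ in the denominator), so $F(z)=0$, while $F(a)$ supplies the desired right-hand side. The only technical care needed is tracking the index shift $\nu-\mu+1=\gamma$ and the $t\mapsto t-1$ reindexing between $\Delta$ and $\nabla$; no other obstacle arises, and the whole argument is essentially a one-step application of the fundamental theorem once the antiderivative has been identified. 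Indeed, this same computation is already carried out in the evaluation of $I(0)$ inside the proof of Theorem \ref{eulerbeta}, so Lemma \ref{lem22} can also be viewed as the $f\equiv 1$ specialization (or the $\alpha=0$ case after relabeling) of the Euler Beta formula on non-uniform lattices.
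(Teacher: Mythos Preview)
Your proposal is correct and follows essentially the same approach as the paper: both identify $-[x_{\gamma+\alpha-1}(z)-x_{\gamma+\alpha-1}(t)]^{(\alpha)}/[\alpha]_q$ as an antiderivative of the kernel via Proposition~\ref{properties} and then telescope the sum. The only cosmetic difference is that the paper writes the $\nabla_t$-identity directly and sums the resulting $\nabla_t$ terms, whereas you pass through the $\Delta$-form first and invoke Proposition~\ref{integration}(2); your remark that this is the $I(0)$ computation inside the proof of Theorem~\ref{eulerbeta} is also apt.
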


\begin{proof}
By the use of \textbf{Propoition} \ref{properties}, one has%
\begin{equation}
\frac{\nabla_{t}[x_{\gamma+\alpha-1}(z)-x_{\gamma+\alpha-1}(t)]^{(\alpha)}%
}{\nabla x_{\gamma}(t)}=-[\alpha]_{q}[x_{\gamma+\alpha-1}(z)-x_{\gamma
+\alpha-1}(t-1)]^{(\alpha-1)}.
\end{equation}
It is easy to know that%
\begin{align}
\nabla_{\gamma}^{-\alpha}1  &  =\sum_{t=a+1}^{z}\frac{[x_{\gamma+\alpha
-1}(z)-x_{\gamma+\alpha-1}(t-1)]^{(\alpha-1)}}{[\Gamma(\alpha)]_{q}}\nabla
x_{\gamma}(t)\nonumber\\
&  =-\sum_{t=a+1}^{z}\frac{\nabla_{t}[x_{\gamma+\alpha-1}(z)-x_{\gamma
+\alpha-1}(t)]^{(\alpha)}}{[\Gamma(\alpha)]_{q}}\nonumber\\
&  =\frac{[x_{\gamma+\alpha-1}(z)-x_{\gamma+\alpha-1}(a)]^{(\alpha)}}%
{[\Gamma(\alpha+1)]_{q}}.
\end{align}

\end{proof}

\begin{theorem}
\label{taylork}(Taylor Theorem) Let $k\in N$, then
\begin{align}
\nabla_{\gamma}^{-k}\nabla_{\gamma}^{k}f(z)  &  =f(z)-f(a)-\nabla_{\gamma
+k-1}^{1}f(a)[x_{\gamma+k-1}(z)-x_{\gamma+k-1}(a)]\nonumber\\
&  -\frac{1}{[2]_{q}!}\nabla_{\gamma+k-2}^{2}f(a)[x_{\gamma+k-1}%
(z)-x_{\gamma+k-1}(a)]^{(2)}\nonumber\\
&  -...-\frac{1}{[k-1]_{q}!}\nabla_{\gamma+1}^{k-1}f(a)[x_{\gamma
+k-1}(z)-x_{\gamma+k-1}(a)]^{(k-1)}\nonumber\\
&  =f(z)-\sum_{j=0}^{k-1}\frac{1}{[j]_{q}!}\nabla_{\gamma+k-j}^{j}%
f(a)[x_{\gamma+k-1}(z)-x_{\gamma+k-1}(a)]^{(j)}. \label{taylor}%
\end{align}

\end{theorem}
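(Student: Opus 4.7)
The plan is to prove (\ref{taylor}) by induction on $k$. The base case $k=1$ is immediate from Proposition \ref{integration}(2), which gives $\nabla_\gamma^{-1}\nabla_\gamma f(z)=f(z)-f(a)$; this matches the single $j=0$ summand on the right-hand side, since $\nabla_{\gamma+1}^{0}f(a)=f(a)$, $[x_\gamma(z)-x_\gamma(a)]^{(0)}=1$, and $[0]_q!=1$.

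For the inductive step, I assume (\ref{taylor}) has been established at order $k-1$ for every function $\phi$, and apply the hypothesis to $\phi:=\nabla_{\gamma+k-1}f$. Reading the outer-to-inner expansion $\nabla_\gamma^{k-1}=\nabla_\gamma\nabla_{\gamma+1}\cdots\nabla_{\gamma+k-2}$, one has $\nabla_\gamma^{k-1}\phi=\nabla_\gamma^{k}f$ and $\nabla_{\gamma+k-1-j}^{j}\phi=\nabla_{\gamma+k-1-j}^{j+1}f$ for each $0\le j\le k-2$. I then apply $\nabla_{\gamma+k-1}^{-1}$ to both sides; the recursive construction (\ref{intsum}) of integer-order sums yields $\nabla_{\gamma+k-1}^{-1}\nabla_\gamma^{-(k-1)}=\nabla_\gamma^{-k}$, so the left-hand side becomes $\nabla_\gamma^{-k}\nabla_\gamma^{k}f(z)$.

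On the right-hand side, the leading term gives $\nabla_{\gamma+k-1}^{-1}\nabla_{\gamma+k-1}f(z)=f(z)-f(a)$ by Proposition \ref{integration}(2) again. For each $0\le j\le k-2$ I must evaluate $\nabla_{\gamma+k-1}^{-1}[x_{\gamma+k-2}(z)-x_{\gamma+k-2}(a)]^{(j)}$: specializing the third identity of Proposition \ref{properties} at $\nu=\gamma+k-2$ and $\mu=j+1$ produces $\nabla_{\gamma+k-1}[x_{\gamma+k-1}(t)-x_{\gamma+k-1}(a)]^{(j+1)}=[j+1]_q[x_{\gamma+k-2}(t)-x_{\gamma+k-2}(a)]^{(j)}$, so the sum from $a+1$ to $z$ telescopes, with the endpoint at $a$ vanishing because $[x_{\gamma+k-1}(a)-x_{\gamma+k-1}(a)]^{(j+1)}=0$, giving $[x_{\gamma+k-1}(z)-x_{\gamma+k-1}(a)]^{(j+1)}/[j+1]_q$. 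Substituting this back, collecting $[j+1]_q[j]_q!=[j+1]_q!$, reindexing $\ell=j+1$, and absorbing the $-f(a)$ term as the missing $\ell=0$ summand (since $\nabla_{\gamma+k}^{0}f(a)=f(a)$ and $[\,\cdot\,]^{(0)}=1$) produces exactly (\ref{taylor}) at order $k$.

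The only delicate point is the index bookkeeping: one must keep straight the lattice shift $\gamma+j$, the difference order $j$, and the subscript of $\nabla_{\mu}^{k}$, and choose the correct specialization $\nu=\gamma+k-2$, $\mu=j+1$ in Proposition \ref{properties} to manufacture the antiderivative with respect to $\nabla_{\gamma+k-1}$. Once these indices are aligned, each step is a mechanical application of Propositions \ref{integration} and \ref{properties} together with the iterative definition (\ref{intsum}).
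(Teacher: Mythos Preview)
Your proof is correct and follows essentially the same inductive scheme as the paper: peel off the innermost difference $\nabla_{\gamma+k-1}$, apply the hypothesis at level $k-1$ to $\phi=\nabla_{\gamma+k-1}f$, then apply the outermost sum $\nabla_{\gamma+k-1}^{-1}$ and use the power rule from Proposition~\ref{properties} to integrate each generalized power (the paper's proof runs from $k$ to $k+1$ rather than $k-1$ to $k$, and invokes Lemma~\ref{lem22} for the constant term, but the structure is identical). Your index bookkeeping---in particular the identification $\nabla_{\gamma+k-1-j}^{\,j}\phi=\nabla_{\gamma+k-1-j}^{\,j+1}f$ and the specialization $\nu=\gamma+k-2$, $\mu=j+1$ in the power rule---is exactly what is needed.
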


\begin{proof}
When $k=1,$ we should prove that
\begin{equation}
\nabla_{\gamma}^{-1}\nabla_{\gamma}^{1}f(z)=f(z)-f(a). \label{taylor1}%
\end{equation}
In fact, one has%
\[
LHS=%
%TCIMACRO{\dsum \limits_{s=a+1}^{z}}%
%BeginExpansion
{\displaystyle\sum\limits_{s=a+1}^{z}}
%EndExpansion
\nabla_{\gamma}^{1}f(s)\nabla x_{\gamma}(s)=%
%TCIMACRO{\dsum \limits_{s=a+1}^{z}}%
%BeginExpansion
{\displaystyle\sum\limits_{s=a+1}^{z}}
%EndExpansion
\nabla f(s)=f(z)-f(a).
\]

When $k=2,$ we should prove that
\begin{equation}
\nabla_{\gamma}^{-2}\nabla_{\gamma}^{2}f(z)=f(z)-f(a)-\nabla_{\gamma+1}%
^{1}f(a)[x_{\gamma+1}(z)-x_{\gamma+1}(a)]. \label{taylor2}%
\end{equation}
Actually, we have%
\[
\nabla_{\gamma}^{-2}\nabla_{\gamma}^{2}f(z)=\nabla_{\gamma+1}^{-1}%
\nabla_{\gamma}^{-1}\nabla_{\gamma}^{1}\nabla_{\gamma+1}^{1}f(z)=\nabla
_{\gamma+1}^{-1}[\nabla_{\gamma}^{-1}\nabla_{\gamma}^{1}]\nabla_{\gamma+1}%
^{1}f(z),
\]
by the use of (\ref{taylor1}) and \textbf{Lemma} \ref{lem22}, we have%
\begin{align}
\nabla_{\gamma+1}^{-1}[\nabla_{\gamma}^{-1}\nabla_{\gamma}^{1}]\nabla
_{\gamma+1}^{1}f(z)  &  =\nabla_{\gamma+1}^{-1}[\nabla_{\gamma+1}%
^{1}f(z)-\nabla_{\gamma+1}^{1}f(a)]\nonumber\\
&  =f(z)-f(a)-\nabla_{\gamma+1}^{-1}[\nabla_{\gamma+1}^{1}f(a)]\nonumber\\
&  =f(z)-f(a)-\nabla_{\gamma+1}^{1}f(a)[x_{\gamma+1}(z)-x_{\gamma+1}(a)].
\end{align}
Assume that when $n=k,$ (\ref{taylor}) holds, then for $n=k+1,$ we should
prove that%
\begin{equation}
\nabla_{\gamma}^{-(k+1)}\nabla_{\gamma}^{k+1}f(z)=f(z)-\sum_{j=0}^{k}\frac
{1}{[j]_{q}!}\nabla_{\gamma+k-j+1}^{j}f(a)[x_{\gamma+k}(z)-x_{\gamma
+k}(a)]^{(j)}.
\end{equation}
In fact, we have%
\begin{align}
\nabla_{\gamma}^{-(k+1)}\nabla_{\gamma}^{k+1}f(z)  &  =\nabla_{\gamma+k}%
^{-1}\nabla_{\gamma}^{-k}\nabla_{\gamma}^{k}\nabla_{\gamma+k}^{1}%
f(z)=\nabla_{\gamma+k}^{-1}[\nabla_{\gamma}^{-k}\nabla_{\gamma}^{k}%
]\nabla_{\gamma+k}^{1}f(z)\nonumber\\
&  =\nabla_{\gamma+k}^{-1}\{\nabla_{\gamma+k}^{1}f(z)-\sum_{j=0}^{k-1}\frac
{1}{[j]_{q}!}\nabla_{\gamma+k-j}^{j}\nabla_{\gamma+k}^{1}f(a)[x_{\gamma
+k-1}(z)-x_{\gamma+k-1}(a)]^{(j)}\}\nonumber\\
&  =f(z)-f(a)-\sum_{j=0}^{k-1}\frac{1}{[j+1]_{q}!}\nabla_{\gamma+k-j}%
^{j}\nabla_{\gamma+k}^{1}f(a)[x_{\gamma+k-1}(z)-x_{\gamma+k-1}(a)]^{(j+1)}\}\\
&  =f(z)-\sum_{j=0}^{k}\frac{1}{[j]_{q}!}\nabla_{\gamma+k-j+1}^{j}%
f(a)[x_{\gamma+k}(z)-x_{\gamma+k}(a)]^{(j)},
\end{align}
the last equation holds is duo to%

\begin{align}
\frac{\nabla}{\nabla x_{\gamma+k}(z)}\nabla_{\gamma+k}^{-1}[x_{\gamma
+k-1}(z)-x_{\gamma+k-1}(a)]^{(j)}  &  =[x_{\gamma+k-1}(z)-x_{\gamma
+k-1}(a)]^{(j)}\nonumber\\
&  =\frac{1}{[j+1]_{q}}\frac{\nabla}{\nabla x_{\gamma+k}(z)}[x_{\gamma
+k}(z)-x_{\gamma+k}(a)]^{(j+1)},
\end{align}
hence it holds that%

\begin{equation}
\nabla_{\gamma+k}^{-1}[x_{\gamma+k-1}(z)-x_{\gamma+k-1}(a)]^{(j)}=\frac
{1}{[j+1]_{q}}[x_{\gamma+k}(z)-x_{\gamma+k}(a)]^{(j+1)}.
\end{equation}

\end{proof}

\begin{proposition}
\label{banqun}For any $\operatorname{Re}\alpha,\operatorname{Re}\beta>0,$ we have%

\begin{equation}
\nabla_{\gamma+\alpha}^{-\beta}\nabla_{\gamma}^{-\alpha}f(z)=\nabla
_{\gamma+\beta}^{-\alpha}\nabla_{\gamma}^{-\beta}f(z)=\nabla_{\gamma
}^{-(\alpha+\beta)}f(z). \label{Pro1}%
\end{equation}

\end{proposition}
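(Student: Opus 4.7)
The plan is to establish the equality $\nabla_{\gamma+\alpha}^{-\beta}\nabla_{\gamma}^{-\alpha}f(z)=\nabla_{\gamma}^{-(\alpha+\beta)}f(z)$ by direct computation; the other equality in \eqref{Pro1} then follows by swapping the roles of $\alpha$ and $\beta$.

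First I would unfold both outer and inner fractional sums using \textbf{Definition} \ref{maindef1}. Writing the outer operator $\nabla_{\gamma+\alpha}^{-\beta}$ applied to $g(s):=\nabla_{\gamma}^{-\alpha}f(s)$ as
\[
\nabla_{\gamma+\alpha}^{-\beta}g(z)=\frac{1}{[\Gamma(\beta)]_{q}}\int_{a+1}^{z}[x_{\gamma+\alpha+\beta-1}(z)-x_{\gamma+\alpha+\beta-1}(s-1)]^{(\beta-1)}g(s)\,d_{\nabla}x_{\gamma+\alpha}(s),
\]
and substituting the integral representation of $g(s)$, I obtain a double sum over the region $\{(t,s):a+1\le t\le s\le z\}$. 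Interchanging the order of summation (exactly as in the computation of $y_{2}(z)$ in Section~2) rewrites the expression as
\[
\frac{1}{[\Gamma(\beta)]_{q}[\Gamma(\alpha)]_{q}}\int_{a+1}^{z}f(t)\,d_{\nabla}x_{\gamma}(t)\int_{t}^{z}[x_{\gamma+\alpha+\beta-1}(z)-x_{\gamma+\alpha+\beta-1}(s-1)]^{(\beta-1)}[x_{\gamma+\alpha-1}(s)-x_{\gamma+\alpha-1}(t-1)]^{(\alpha-1)}d_{\nabla}x_{\gamma+\alpha}(s).
\]

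The key step is to recognize the inner $s$-integral as an instance of the Euler Beta formula on non-uniform lattices (\textbf{Theorem} \ref{eulerbeta}). Specifically, in that theorem I would replace $a+1$ by $t$, take $\alpha\mapsto\alpha-1$, keep $\beta$ as is, and identify the base lattice $x$ with $x_{\gamma+\alpha-1}$ (so that $x_{\beta}$ in the statement becomes $x_{\gamma+\alpha+\beta-1}$ and $d_{\nabla}x_{1}$ becomes $d_{\nabla}x_{\gamma+\alpha}$). \textbf{Theorem} \ref{eulerbeta} then yields
\[
\int_{t}^{z}[x_{\gamma+\alpha+\beta-1}(z)-x_{\gamma+\alpha+\beta-1}(s-1)]^{(\beta-1)}[x_{\gamma+\alpha-1}(s)-x_{\gamma+\alpha-1}(t-1)]^{(\alpha-1)}d_{\nabla}x_{\gamma+\alpha}(s)=\frac{[\Gamma(\alpha)]_{q}[\Gamma(\beta)]_{q}}{[\Gamma(\alpha+\beta)]_{q}}[x_{\gamma+\alpha+\beta-1}(z)-x_{\gamma+\alpha+\beta-1}(t-1)]^{(\alpha+\beta-1)}.
\]
Plugging this back, the prefactors $[\Gamma(\alpha)]_{q}[\Gamma(\beta)]_{q}$ cancel and what remains is exactly $\nabla_{\gamma}^{-(\alpha+\beta)}f(z)$ in the form given by \textbf{Definition} \ref{maindef1}.

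The main obstacle I anticipate is bookkeeping for the lattice-shift indices when matching the inner integral to \textbf{Theorem} \ref{eulerbeta}: the generalized power carries two independent shift parameters, and one must check that shifting the base lattice $x\to x_{\gamma+\alpha-1}$ produces consistent shifts throughout (in the measure $d_{\nabla}x_{1}\to d_{\nabla}x_{\gamma+\alpha}$ and in the ``$\beta$-shifted'' kernel $x_{\beta}\to x_{\gamma+\alpha+\beta-1}$). The interchange of summation order is justified because all sums are finite (since $z-a\in\mathbb{N}$). Once the identity $\nabla_{\gamma+\alpha}^{-\beta}\nabla_{\gamma}^{-\alpha}f=\nabla_{\gamma}^{-(\alpha+\beta)}f$ is proved, the symmetry $\alpha\leftrightarrow\beta$ in the right-hand side of the Euler Beta identity automatically delivers $\nabla_{\gamma+\beta}^{-\alpha}\nabla_{\gamma}^{-\beta}f=\nabla_{\gamma}^{-(\alpha+\beta)}f$, completing the semigroup property \eqref{Pro1}.
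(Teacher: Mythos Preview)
Your proposal is correct and follows essentially the same approach as the paper's own proof: unfold the two fractional sums via \textbf{Definition}~\ref{maindef1}, interchange the (finite) order of summation, and collapse the inner sum by the Euler Beta formula on non-uniform lattices (\textbf{Theorem}~\ref{eulerbeta}) with exactly the substitutions you describe ($a+1\mapsto t$, $\alpha\mapsto\alpha-1$, base lattice $x\mapsto x_{\gamma+\alpha-1}$). The only cosmetic difference is that the paper names the outer and inner summation variables $t$ and $s$ respectively, opposite to your convention.
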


\begin{proof}
By \textbf{Definition} \ref{maindef1}, we have
\begin{align*}
\nabla_{\gamma+\alpha}^{-\beta}\nabla_{\gamma}^{-\alpha}f(z)  &  =\sum
_{t=a+1}^{z}\frac{[x_{\gamma+\alpha+\beta-1}(z)-x_{\gamma+\alpha+\beta
-1}(t-1)]^{(\beta-1)}}{[\Gamma(\beta)]_{q}}\nabla_{\gamma}^{-\alpha}f(t)\nabla
x_{\gamma+\alpha}(t)\\
&  =\sum_{t=a+1}^{z}\frac{[x_{\gamma+\alpha+\beta-1}(z)-x_{\gamma+\alpha
+\beta-1}(t-1)]^{(\beta-1)}}{[\Gamma(\beta)]_{q}}\nabla x_{\gamma+\alpha}(t)\\
&  \sum_{s=a+1}^{t}\frac{[x_{\gamma+\alpha-1}(t)-x_{\gamma+\alpha
-1}(s-1)]^{(\alpha-1)}}{[\Gamma(\alpha)]_{q}}f(s)\nabla x_{\gamma}(s)\\
&  =\sum_{s=a+1}^{z}f(s)\nabla x_{\gamma}(s)\sum_{t=s}^{z}\frac{[x_{\gamma
+\alpha+\beta-1}(z)-x_{\gamma+\alpha+\beta-1}(t-1)]^{(\beta-1)}}{[\Gamma
(\beta)]_{q}}\\
&  \frac{[x_{\gamma+\alpha-1}(t)-x_{\gamma+\alpha-1}(s-1)]^{(\alpha-1)}%
}{[\Gamma(\alpha)]_{q}}\nabla x_{\gamma+\alpha}(t).
\end{align*}
In \textbf{Theorem \ref{eulerbeta}, }replacing $a+1$ with $s;\alpha$ with
$\alpha-1;$ and replacing $x(t)$ with $x_{\nu+\alpha-1}(t),$ then $x_{\beta
}(t)$ with $x_{\nu+\alpha+\beta-1}(t),$ we get that
\begin{align*}
&  \sum_{t=s}^{z}\frac{[x_{\gamma+\alpha+\beta-1}(z)-x_{\gamma+\alpha+\beta
-1}(t-1)]^{(\beta-1)}}{[\Gamma(\beta)]_{q}}\\
&  \cdot\frac{\lbrack x_{\gamma+\alpha-1}(t)-x_{\gamma+\alpha-1}%
(s-1)]^{(\alpha-1)}}{[\Gamma(\alpha)]_{q}}\nabla x_{\gamma+\alpha}(t)\\
&  =\frac{[x_{\gamma+\alpha+\beta-1}(z)-x_{\gamma+\alpha+\beta-1}%
(s-1)]^{(\alpha+\beta-1)}}{[\Gamma(\alpha+\beta)]_{q}}%
\end{align*}
it yields%

\begin{align*}
\nabla_{\gamma+\alpha}^{-\beta}\nabla_{\gamma}^{-\alpha}f(z)  &  =\sum
_{s=a+1}^{z}\frac{[x_{\gamma+\alpha+\beta-1}(z)-x_{\gamma+\alpha+\beta
-1}(s-1)]^{(\alpha+\beta-1)}}{[\Gamma(\alpha+\beta)]_{q}}f(s)\nabla x_{\gamma
}(s)\\
&  =\nabla_{\gamma}^{-(\alpha+\beta)}f(z).
\end{align*}

\end{proof}

\begin{proposition}
\label{basic}For any $\operatorname{Re}\alpha>0,$ we have%
\begin{equation}
\nabla_{\gamma}^{\alpha}\nabla_{\gamma}^{-\alpha}f(z)=f(z).
\end{equation}

\end{proposition}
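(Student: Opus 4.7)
The plan is to reduce the statement to the already-established semigroup property for fractional sums (Proposition \ref{banqun}) together with the integer-order inversion $\nabla_\gamma^m \nabla_\gamma^{-m} f(z) = f(z)$ which was noted (for every $m \in \mathbb{N}$) in Section 2 just after the definition of iterated integer sums.

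First I would unfold the left-hand side using Definition \ref{maindef2}. Let $m$ be the smallest integer exceeding $\operatorname{Re}\alpha$; then
\begin{equation*}
\nabla_\gamma^\alpha \nabla_\gamma^{-\alpha} f(z)
= \nabla_\gamma^m \bigl( \nabla_{\gamma+\alpha}^{\alpha-m} \bigl( \nabla_\gamma^{-\alpha} f(z) \bigr) \bigr).
\end{equation*}
Since $m$ strictly exceeds $\operatorname{Re}\alpha$, we have $\operatorname{Re}(m-\alpha) > 0$, so $\nabla_{\gamma+\alpha}^{\alpha-m}$ is genuinely a fractional sum of positive real part (with exponent $-(m-\alpha)$), and Proposition \ref{banqun} applies.

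Next I would invoke Proposition \ref{banqun} with the roles $(\alpha, \beta) \mapsto (\alpha, m-\alpha)$ to collapse the two nested fractional sums:
\begin{equation*}
\nabla_{\gamma+\alpha}^{-(m-\alpha)} \nabla_\gamma^{-\alpha} f(z)
= \nabla_\gamma^{-(\alpha + (m-\alpha))} f(z)
= \nabla_\gamma^{-m} f(z).
\end{equation*}
Substituting back,
\begin{equation*}
\nabla_\gamma^\alpha \nabla_\gamma^{-\alpha} f(z) = \nabla_\gamma^m \nabla_\gamma^{-m} f(z).
\end{equation*}

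Finally, I would close by citing the integer-order identity $\nabla_\gamma^m \nabla_\gamma^{-m} f(z) = f(z)$ that was verified inductively in Section 2 (the iterated telescoping based on Proposition \ref{integration}). This yields $\nabla_\gamma^\alpha \nabla_\gamma^{-\alpha} f(z) = f(z)$. I do not expect any serious obstacle here, since all the heavy analytic content, namely the Euler Beta identity on non-uniform lattices (Theorem \ref{eulerbeta}) and its consequence Proposition \ref{banqun}, has already been proved; the only small point to check is that the hypothesis $\operatorname{Re}(m-\alpha) > 0$ of Proposition \ref{banqun} is strictly satisfied, which forces one to take $m$ as the smallest integer \emph{strictly} exceeding $\operatorname{Re}\alpha$ (consistent with Definition \ref{maindef2}) rather than merely $\lceil \operatorname{Re}\alpha \rceil$ when $\alpha$ itself is a positive integer; in the latter case the statement is just the already-known integer identity and requires no fractional machinery at all.
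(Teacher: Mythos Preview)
Your proof is correct and follows exactly the same route as the paper: unfold $\nabla_\gamma^\alpha$ via Definition~\ref{maindef2}, apply Proposition~\ref{banqun} to collapse $\nabla_{\gamma+\alpha}^{\alpha-m}\nabla_\gamma^{-\alpha}$ into $\nabla_\gamma^{-m}$, and finish with the integer-order identity $\nabla_\gamma^m\nabla_\gamma^{-m}f(z)=f(z)$ from Section~2. Your additional remark about the edge case $\alpha\in\mathbb{N}$ and the strict inequality $\operatorname{Re}(m-\alpha)>0$ is a welcome clarification that the paper leaves implicit.
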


\begin{proof}
By \textbf{Definition \ref{maindef1}}, we have%
\begin{equation}
\nabla_{\gamma}^{\alpha}\nabla_{\gamma}^{-\alpha}f(z)=\nabla_{\gamma}%
^{m}(\nabla_{\gamma+\alpha}^{\alpha-m})\nabla_{\gamma}^{-\alpha}f(z).
\end{equation}
In view of \textbf{Proposition \ref{banqun}, }one gets%
\[
\nabla_{\gamma+\alpha}^{\alpha-m}\nabla_{\gamma}^{-\alpha}f(z)=\nabla_{\gamma
}^{-m}f(z).
\]
Therefore, we obtain%
\[
\nabla_{\gamma}^{\alpha}\nabla_{\gamma}^{-\alpha}f(z)=\nabla_{\gamma}%
^{m}\nabla_{\gamma}^{-m}f(z)=f(z).
\]

\end{proof}

\begin{proposition}
\label{lem26}Let $m\in N^{+},\alpha>0,$ then%
\begin{equation}
\nabla_{\gamma}^{m}\nabla_{\gamma+m-\alpha}^{-\alpha}f(z)=%
%TCIMACRO{\QATOPD{\{}{.}{\nabla_{\gamma+m-\alpha}^{m-\alpha}f(z),(\text{when
%}m-\alpha<0)}{\nabla_{\gamma}^{m-\alpha}f(z).(\text{when }m-\alpha>0)} }%
%BeginExpansion
\genfrac{\{}{.}{0pt}{}{\nabla_{\gamma+m-\alpha}^{m-\alpha}f(z),(\text{when
}m-\alpha<0)}{\nabla_{\gamma}^{m-\alpha}f(z).(\text{when }m-\alpha>0)}
%EndExpansion
\label{pro2}%
\end{equation}

\end{proposition}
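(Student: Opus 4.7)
The plan is to reduce each of the two cases to two ingredients already in hand: the semigroup property for fractional sums (Proposition~\ref{banqun}) and the elementary integer cancellation $\nabla_\mu^{k}\nabla_\mu^{-k}=I$ noted just before Definition~\ref{maindef1}. Throughout I will use one additional structural identity, namely the factorization
\[
\nabla_\gamma^{m}=\nabla_\gamma^{\ell}\,\nabla_{\gamma+\ell}^{\,m-\ell}\qquad(0\le \ell\le m\text{ integers}),
\]
which is immediate from unrolling the nested backward quotients defining $\nabla_\gamma^{k}$: the chain of factors $\tfrac{\nabla}{\nabla x_\gamma},\tfrac{\nabla}{\nabla x_{\gamma+1}},\ldots,\tfrac{\nabla}{\nabla x_{\gamma+m-1}}$ is simply split after the $\ell$-th factor.

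First I treat the case $m-\alpha<0$, so $\alpha>m$ and the right-hand side is the fractional sum $\nabla_{\gamma+m-\alpha}^{-(\alpha-m)}f$. I apply Proposition~\ref{banqun} at base index $\mu=\gamma+m-\alpha$ with orders $\alpha_1=\alpha-m>0$ and $\beta_1=m>0$; since $\mu+\alpha_1=\gamma$, it yields
\[
\nabla_{\gamma+m-\alpha}^{-\alpha}f(z)=\nabla_\gamma^{-m}\,\nabla_{\gamma+m-\alpha}^{-(\alpha-m)}f(z).
\]
Applying $\nabla_\gamma^{m}$ to both sides and cancelling $\nabla_\gamma^{m}\nabla_\gamma^{-m}=I$ produces the claim.

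Next I treat the case $m-\alpha>0$. Let $\ell$ be the smallest integer exceeding $m-\alpha$; then $1\le\ell\le m$ (because $m$ itself exceeds $m-\alpha$), and by Definition~\ref{maindef2},
\[
\nabla_\gamma^{m-\alpha}f(z)=\nabla_\gamma^{\ell}\,\nabla_{\gamma+m-\alpha}^{-(\ell-m+\alpha)}f(z).
\]
I invoke Proposition~\ref{banqun} again, now at base $\mu=\gamma+m-\alpha$ with orders $\alpha_1=\ell-m+\alpha>0$ and $\beta_1=m-\ell\ge 0$, to obtain
\[
\nabla_{\gamma+m-\alpha}^{-\alpha}f(z)=\nabla_{\gamma+\ell}^{-(m-\ell)}\,\nabla_{\gamma+m-\alpha}^{-(\ell-m+\alpha)}f(z).
\]
Applying $\nabla_\gamma^{m}$, using the factorization above to split $\nabla_\gamma^{m}=\nabla_\gamma^{\ell}\nabla_{\gamma+\ell}^{\,m-\ell}$, and cancelling $\nabla_{\gamma+\ell}^{\,m-\ell}\nabla_{\gamma+\ell}^{-(m-\ell)}=I$ collapses the expression to $\nabla_\gamma^{\ell}\,\nabla_{\gamma+m-\alpha}^{-(\ell-m+\alpha)}f(z)=\nabla_\gamma^{m-\alpha}f(z)$, as required.

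The main obstacle is really the second case: one cannot read the identity off the definition directly, because Definition~\ref{maindef2} computes $\nabla_\gamma^{m-\alpha}$ using the integer $\ell$, whereas the left-hand side of \eqref{pro2} naturally involves $m\ge \ell$. The semigroup property of Proposition~\ref{banqun} is used precisely to absorb the gap of $m-\ell$ extra integer differences against $m-\ell$ additional fractional-sum factors, and the factorization of the integer operator is what makes that cancellation legal. Degenerate subcases, such as $\ell=m$ (which occurs when $0<\alpha<1$) or $m-\alpha$ being itself an integer making $\alpha_1=0$, either reduce to a trivial application of the semigroup with one order zero, or are handled by interpreting $\nabla^{0}$ as the identity.
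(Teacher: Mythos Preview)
Your argument is correct and uses the same two ingredients as the paper---the semigroup identity of Proposition~\ref{banqun} and the integer cancellation $\nabla_\mu^{k}\nabla_\mu^{-k}=I$---but the organization is genuinely different. The paper first isolates the base case $0<\alpha<1$ (where the claim is literally Definition~\ref{maindef2} with $\beta=m-\alpha$), then for general $\alpha$ writes $\alpha=k+\widetilde\alpha$ with $0\le\widetilde\alpha<1$, splits $\nabla_{\gamma+m-\alpha}^{-\alpha}=\nabla_{\gamma+m-k}^{-k}\nabla_{\gamma+m-\alpha}^{-\widetilde\alpha}$ via the semigroup, collapses $\nabla_\gamma^{m}\nabla_{\gamma+m-k}^{-k}$ according to the sign of $m-k$, and feeds the remaining $\nabla^{-\widetilde\alpha}$ into the base case. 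You instead work directly with the integer $\ell$ that Definition~\ref{maindef2} would use to \emph{define} $\nabla_\gamma^{m-\alpha}$, and arrange the semigroup split so that exactly $m-\ell$ integer differences are left over to cancel. This is a bit more streamlined: it goes straight to the target without the detour through the $0<\alpha<1$ case, and makes transparent why the gap $m-\ell$ is the right thing to absorb. The paper's route has the (minor) advantage that the degenerate situations you flag at the end---$\ell=m$, or $m-\alpha$ integral---are automatically covered by its explicit base case rather than needing a separate remark; in your write-up those really do reduce to $\nabla^{0}=I$, so the treatment is fine, but it would read more smoothly if you recorded that $\ell=m$ is exactly the case $0<\alpha<1$, in which the identity is Definition~\ref{maindef2} itself and no semigroup step is needed.
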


\begin{proof}
If $0\leq\alpha<1,$ set $\beta=m-\alpha,$ then $0\leq m-1<\beta\leq m.$ By
\textbf{Definition} \ref{maindef2}, one has%
\[
\nabla_{\gamma}^{\beta}f(z)=\nabla_{\gamma}^{m}\nabla_{\gamma+\beta}^{\beta
-m}f(z),
\]
that is%
\begin{equation}
\nabla_{\gamma}^{m}\nabla_{\gamma+m-\alpha}^{-\alpha}f(z)=\nabla_{\gamma
}^{m-\alpha}f(z). \label{1}%
\end{equation}
If $k\leq\alpha<k+1,k\in N^{+},$set $\widetilde{\alpha}=\alpha-k,$ then
$0\leq\widetilde{\alpha}<1,$ one has%

\[
\nabla_{\gamma}^{m}\nabla_{\gamma+m-\alpha}^{-\alpha}f(z)=\nabla_{\gamma}%
^{m}\nabla_{\gamma+m-k-\widetilde{\alpha}}^{-k-\widetilde{\alpha}}%
f(z)=\nabla_{\gamma}^{m}\nabla_{\gamma+m-k}^{-k}\nabla_{\gamma
+m-k-\widetilde{\alpha}}^{-\widetilde{\alpha}}f(z)
\]
When $m-k>0,$ we have%

\[
\nabla_{\gamma}^{m}\nabla_{\gamma+m-k}^{-k}\nabla_{\gamma
+m-k-\widetilde{\alpha}}^{-\widetilde{\alpha}}f(z)=\nabla_{\gamma}^{m-k}%
\nabla_{\gamma+m-k-\widetilde{\alpha}}^{-\widetilde{\alpha}}f(z),
\]
since $m-k-\widetilde{\alpha}=m-\alpha>0,$ From (\ref{1}), one has%

\[
\nabla_{\gamma}^{m-k}\nabla_{\gamma+m-k-\widetilde{\alpha}}%
^{-\widetilde{\alpha}}f(z)=\nabla_{\gamma}^{m-k-\widetilde{\alpha}}%
f(z)=\nabla_{\gamma}^{m-\alpha}f(z).
\]
When $m-k<0,$ we have%
\[
\nabla_{\gamma}^{m}\nabla_{\gamma+m-k}^{-k}\nabla_{\gamma
+m-k-\widetilde{\alpha}}^{-\widetilde{\alpha}}f(z)=\nabla_{\gamma+m-k}%
^{m-k}\nabla_{\gamma+m-k-\widetilde{\alpha}}^{-\widetilde{\alpha}}f(z),
\]
since $m-k-\widetilde{\alpha}=m-\alpha<0,$ From (\ref{Pro1}), we obtain%
\[
\nabla_{\gamma+m-k}^{m-k}\nabla_{\gamma+m-k-\widetilde{\alpha}}%
^{-\widetilde{\alpha}}f(z)=\nabla_{\gamma+m-k}^{m-k-\widetilde{\alpha}%
}f(z)=\nabla_{\gamma+m-k}^{m-\alpha}f(z).
\]
Obviously, $m-k>0$ or $m-k>0$ is equivalent to $m-\alpha>0$ or $m-\alpha>0,$
hence it yields%
\[
\nabla_{\gamma}^{m}\nabla_{\gamma+m-\alpha}^{-\alpha}f(z)=%
%TCIMACRO{\QATOPD{\{}{.}{\nabla_{\gamma+m-\alpha}^{m-\alpha}f(z),(\text{when
%}m-\alpha<0)}{\nabla_{\gamma}^{m-\alpha}f(z).(\text{when }m-\alpha>0)}}%
%BeginExpansion
\genfrac{\{}{.}{0pt}{}{\nabla_{\gamma+m-\alpha}^{m-\alpha}f(z),(\text{when
}m-\alpha<0)}{\nabla_{\gamma}^{m-\alpha}f(z).(\text{when }m-\alpha>0)}%
%EndExpansion
\]

\end{proof}

\begin{proposition}
\label{lem27}Let $\alpha>0,\beta>0$, We have%
\[
\nabla_{\gamma}^{\beta}\nabla_{\gamma+\beta-\alpha}^{-\alpha}f(z)=%
%TCIMACRO{\QATOPD{\{}{.}{\nabla_{\gamma+\beta-\alpha}^{\beta-\alpha
%}f(z),(\text{when }\beta-\alpha<0)}{\nabla_{\gamma}^{\beta-\alpha
%}f(z).(\text{when }\beta-\alpha>0)}}%
%BeginExpansion
\genfrac{\{}{.}{0pt}{}{\nabla_{\gamma+\beta-\alpha}^{\beta-\alpha
}f(z),(\text{when }\beta-\alpha<0)}{\nabla_{\gamma}^{\beta-\alpha
}f(z).(\text{when }\beta-\alpha>0)}%
%EndExpansion
\]

\end{proposition}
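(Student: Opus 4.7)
The proposition is the natural generalization of \textbf{Proposition} \ref{lem26} from integer $m$ to arbitrary positive real $\beta$, and the plan is to reduce to that case via the semigroup property of fractional sums (\textbf{Proposition} \ref{banqun}) together with the Riemann--Liouville definition (\textbf{Definition} \ref{maindef2}).

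First I would let $m$ be the smallest integer exceeding $\operatorname{Re}\beta$, so that $0\le m-\beta<1$, and invoke \textbf{Definition} \ref{maindef2} to write
\[
\nabla_{\gamma}^{\beta}\nabla_{\gamma+\beta-\alpha}^{-\alpha}f(z)
=\nabla_{\gamma}^{m}\,\nabla_{\gamma+\beta}^{-(m-\beta)}\,\nabla_{\gamma+\beta-\alpha}^{-\alpha}f(z).
\]
Here both exponents $m-\beta\ge 0$ and $\alpha>0$ are non-negative, so the two inner operators are genuine fractional sums.

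Next I would merge these two fractional sums by checking that their indices fit the template of \textbf{Proposition} \ref{banqun}. With the identifications $\gamma'=\gamma+\beta-\alpha$, $\alpha'=\alpha$, $\beta'=m-\beta$, the outer sum is $\nabla_{\gamma'+\alpha'}^{-\beta'}$ acting on $\nabla_{\gamma'}^{-\alpha'}$, exactly the left-hand side of (\ref{Pro1}). Hence
\[
\nabla_{\gamma+\beta}^{-(m-\beta)}\nabla_{\gamma+\beta-\alpha}^{-\alpha}f(z)
=\nabla_{\gamma+\beta-\alpha}^{-(\alpha+m-\beta)}f(z).
\]
Substituting back gives
\[
\nabla_{\gamma}^{\beta}\nabla_{\gamma+\beta-\alpha}^{-\alpha}f(z)
=\nabla_{\gamma}^{m}\,\nabla_{\gamma+\beta-\alpha}^{-(m-(\beta-\alpha))}f(z).
\]

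Finally I would apply \textbf{Proposition} \ref{lem26} with the auxiliary exponent $\alpha^{\ast}:=m-(\beta-\alpha)=\alpha+(m-\beta)>0$. Since $m-\alpha^{\ast}=\beta-\alpha$, we have $\gamma+m-\alpha^{\ast}=\gamma+\beta-\alpha$, so the right-hand side is exactly $\nabla_{\gamma}^{m}\nabla_{\gamma+m-\alpha^{\ast}}^{-\alpha^{\ast}}f(z)$. Proposition \ref{lem26} then yields
\[
\nabla_{\gamma}^{m}\nabla_{\gamma+m-\alpha^{\ast}}^{-\alpha^{\ast}}f(z)
=\begin{cases}
\nabla_{\gamma+\beta-\alpha}^{\,\beta-\alpha}f(z), & \beta-\alpha<0,\\[2pt]
\nabla_{\gamma}^{\,\beta-\alpha}f(z), & \beta-\alpha>0,
\end{cases}
\]
which is precisely the stated identity.

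The essential work is already contained in \textbf{Propositions} \ref{banqun} and \ref{lem26}; the only real obstacle is the bookkeeping of the lattice shifts $\gamma,\gamma+\beta,\gamma+\beta-\alpha$ so that the semigroup law applies without mismatch. The verification that $m-\beta\ge 0$ (which makes the first reduction legitimate) and that the case split $\beta-\alpha \gtrless 0$ from \textbf{Proposition} \ref{lem26} transports correctly to the current indices are the only subtle points; once these are tracked carefully, the conclusion is immediate.
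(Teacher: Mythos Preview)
Your proof is correct and follows essentially the same route as the paper: apply \textbf{Definition}~\ref{maindef2} to split $\nabla_{\gamma}^{\beta}$ as $\nabla_{\gamma}^{m}\nabla_{\gamma+\beta}^{\beta-m}$, merge the two fractional sums via \textbf{Proposition}~\ref{banqun}, and then invoke \textbf{Proposition}~\ref{lem26}. The paper's version is terser and leaves the index-matching for \textbf{Proposition}~\ref{banqun} and the identification $\alpha^{\ast}=m-(\beta-\alpha)$ implicit, but the argument is the same.
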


\begin{proof}
Let $m$ be the smallest integer exceeding $\beta,$ then by \textbf{Definition}
\ref{maindef2}, we have%

\begin{align*}
\nabla_{\gamma}^{\beta}\nabla_{\gamma+\beta-\alpha}^{-\alpha}f(z)  &
=\nabla_{\gamma}^{m}\nabla_{\gamma+\beta}^{\beta-m}\nabla_{\gamma+\beta
-\alpha}^{-\alpha}f(z)\\
&  =\nabla_{\gamma}^{m}\nabla_{\gamma+\beta-\alpha}^{\beta-\alpha-m}f(z).
\end{align*}
From \textbf{Proposition} \ref{lem26}, we obtain%

\[
\nabla_{\gamma}^{m}\nabla_{\gamma+\beta-\alpha}^{\beta-\alpha-m}f(z)=%
%TCIMACRO{\QATOPD{\{}{.}{\nabla_{\gamma+\beta-\alpha}^{\beta-\alpha
%}f(z),(\text{when }\beta-\alpha<0)}{\nabla_{\gamma}^{\beta-\alpha
%}f(z).(\text{when }\beta-\alpha>0)}}%
%BeginExpansion
\genfrac{\{}{.}{0pt}{}{\nabla_{\gamma+\beta-\alpha}^{\beta-\alpha
}f(z),(\text{when }\beta-\alpha<0)}{\nabla_{\gamma}^{\beta-\alpha
}f(z).(\text{when }\beta-\alpha>0)}%
%EndExpansion
\]

\end{proof}

\begin{proposition}
\label{ftaylor1}(Fractional Taylor formula) Let $\alpha>0,$ $k$ be the
smallest integer exceeding $\alpha,$ then%
\[
\nabla_{\gamma}^{-\alpha}\nabla_{\gamma}^{\alpha}f(z)=f(z)-\sum_{j=0}%
^{k-1}\nabla_{\gamma+k-j}^{j-k+\alpha}f(a)\frac{[x_{\gamma+\alpha
-1}(z)-x_{\gamma+\alpha-1}(a)]^{(\alpha-k+j)}}{[\Gamma(\alpha-k+j+1)]_{q}}.
\]

\end{proposition}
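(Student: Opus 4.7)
The plan is to reduce the Fractional Taylor formula to the integer Taylor formula already proved in Theorem \ref{taylork}, by factoring $\nabla_{\gamma}^{\alpha}$ through an integer difference. Set
\[
g(z):=\nabla_{\gamma+\alpha}^{\alpha-k}f(z),
\]
so that by Definition \ref{maindef2} one has $\nabla_{\gamma}^{\alpha}f(z)=\nabla_{\gamma}^{k}g(z)$. The goal is to move the formula for $\nabla_{\gamma}^{-k}\nabla_{\gamma}^{k}g$ (which is an immediate consequence of Theorem \ref{taylork}) back to a statement about $\nabla_{\gamma}^{-\alpha}\nabla_{\gamma}^{\alpha}f$.

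First I would apply Theorem \ref{taylork} to $g$, obtaining
\[
\nabla_{\gamma}^{-k}\nabla_{\gamma}^{k}g(z)=g(z)-\sum_{j=0}^{k-1}\frac{1}{[j]_{q}!}\nabla_{\gamma+k-j}^{j}g(a)\,[x_{\gamma+k-1}(z)-x_{\gamma+k-1}(a)]^{(j)}.
\]
Then I would use Proposition \ref{banqun} to factor $\nabla_{\gamma}^{-k}=\nabla_{\gamma+\alpha}^{-(k-\alpha)}\nabla_{\gamma}^{-\alpha}$, which rewrites the left-hand side as $\nabla_{\gamma+\alpha}^{-(k-\alpha)}\bigl[\nabla_{\gamma}^{-\alpha}\nabla_{\gamma}^{\alpha}f(z)\bigr]$. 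Applying $\nabla_{\gamma+\alpha}^{k-\alpha}$ to both sides and invoking Proposition \ref{basic} then isolates $\nabla_{\gamma}^{-\alpha}\nabla_{\gamma}^{\alpha}f(z)$ on the left; and the leading term on the right becomes $\nabla_{\gamma+\alpha}^{k-\alpha}g(z)=\nabla_{\gamma+\alpha}^{k-\alpha}\nabla_{\gamma+\alpha}^{\alpha-k}f(z)=f(z)$, again by Proposition \ref{basic}.

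The remaining work is to reduce each of the correction terms to the form asserted in the theorem. For the coefficient, I would apply Proposition \ref{lem27} with $\gamma'=\gamma+k-j$, $\beta=j$, $\alpha'=k-\alpha$, noting $\gamma'+\beta-\alpha'=\gamma+\alpha$, to conclude $\nabla_{\gamma+k-j}^{j}g(a)=\nabla_{\gamma+k-j}^{j}\nabla_{\gamma+\alpha}^{\alpha-k}f(a)=\nabla_{\gamma+k-j}^{j-k+\alpha}f(a)$. For the generalized-power factor, I would first rewrite it using Lemma \ref{lem22} as $\frac{1}{[j]_{q}!}[x_{\gamma+k-1}(z)-x_{\gamma+k-1}(a)]^{(j)}=\nabla_{\gamma+k-j}^{-j}1$, and then compute $\nabla_{\gamma+\alpha}^{k-\alpha}\nabla_{\gamma+k-j}^{-j}1$ via Proposition \ref{lem27} (now with $\gamma'=\gamma+\alpha$, $\beta=k-\alpha$, $\alpha'=j$), which collapses to $\nabla_{\gamma+k-j}^{-(j-k+\alpha)}1$ and, by a second application of Lemma \ref{lem22}, to exactly $[x_{\gamma+\alpha-1}(z)-x_{\gamma+\alpha-1}(a)]^{(\alpha-k+j)}/[\Gamma(\alpha-k+j+1)]_{q}$.

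The main obstacle is the bookkeeping in this last paragraph: Propositions \ref{lem26} and \ref{lem27} give different outputs depending on the sign of $\beta-\alpha'$, and since $0<k-\alpha<1$ the sign changes between $j=0$ and $j\ge 1$. So I expect to handle $j=0$ as a separate (essentially trivial) case, observing that both the coefficient $\nabla_{\gamma+k}^{\alpha-k}f(a)$ and any fractional-sum interpretation of $\nabla_{\gamma+\alpha}^{k-\alpha}1$ at $z=a$ vanish because they are represented by empty sums $\sum_{t=a+1}^{a}$, so the $j=0$ entry simply produces matching expressions on both sides. Apart from this case analysis, the proof is a chain of semigroup and inverse identities (Propositions \ref{banqun}, \ref{basic}, \ref{lem27}) together with Lemma \ref{lem22}, so the argument should go through cleanly once the indices are tracked carefully.
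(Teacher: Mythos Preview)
Your proposal is correct and follows essentially the same route as the paper: factor $\nabla_{\gamma}^{\alpha}=\nabla_{\gamma}^{k}\nabla_{\gamma+\alpha}^{\alpha-k}$, apply the integer Taylor formula (Theorem \ref{taylork}) to $g=\nabla_{\gamma+\alpha}^{\alpha-k}f$, and then pass the outer operator $\nabla_{\gamma+\alpha}^{k-\alpha}$ through using the semigroup/inverse identities and Lemma \ref{lem22}. The only cosmetic difference is that the paper writes $\nabla_{\gamma}^{-\alpha}=\nabla_{\gamma+\alpha}^{k-\alpha}\nabla_{\gamma}^{-k}$ directly (via Proposition \ref{lem27}) rather than first factoring $\nabla_{\gamma}^{-k}$ and then applying $\nabla_{\gamma+\alpha}^{k-\alpha}$ to both sides; your explicit attention to the $j=0$ sign case is in fact more careful than the paper's own treatment.
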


\begin{proof}
By \textbf{Definition} \ref{banqun}, \textbf{Definition} \ref{maindef2} and
\textbf{Proposition} \ref{taylork}, we obtain%

\begin{align*}
\nabla_{\gamma}^{-\alpha}\nabla_{\gamma}^{\alpha}f(z)  &  =\nabla
_{\gamma+\alpha}^{-\alpha+k}\nabla_{\gamma}^{-k}\nabla_{\gamma}^{k}%
\nabla_{\gamma+\alpha}^{\alpha-k}f(z)\\
&  =\nabla_{\gamma+\alpha}^{-\alpha+k}\{\nabla_{\gamma+\alpha}^{\alpha
-k}f(z)-\frac{1}{[j]_{q}!}\sum_{j=0}^{k-1}\nabla_{\gamma+k-j+1}^{j}%
\nabla_{\gamma+\alpha}^{-k+\alpha}f(a)[x_{\gamma+k-1}(z)-x_{\gamma
+k-1}(a)]^{(j)}\},
\end{align*}
From Lemma \ref{lem26}, the use of%
\[
\nabla_{\gamma+k-j+1}^{j}\nabla_{\gamma+\alpha}^{-k+\alpha}f(a)=%
%TCIMACRO{\QATOPD{\{}{.}{\nabla_{\gamma+\alpha}^{-k+\alpha}f(a),(\text{when
%}j=0)}{\nabla_{\gamma+k-j+1}^{j-k+\alpha}f(a).(\text{when }j>0)}}%
%BeginExpansion
\genfrac{\{}{.}{0pt}{}{\nabla_{\gamma+\alpha}^{-k+\alpha}f(a),(\text{when
}j=0)}{\nabla_{\gamma+k-j+1}^{j-k+\alpha}f(a).(\text{when }j>0)}%
%EndExpansion
\]
and%
\begin{align*}
\nabla_{\gamma+\alpha}^{-\alpha+k}\{\frac{[x_{\gamma+k-1}(z)-x_{\gamma
+k-1}(a)]^{(j)}}{[\Gamma(j+1)]_{q}}\}  &  =\nabla_{\gamma+\alpha}^{-\alpha
+k}\nabla_{\gamma+k-j}^{-j}(1)=\nabla_{\gamma+k-j}^{-\alpha+k-j}(1)\\
&  =\frac{[x_{\gamma+\alpha-1}(z)-x_{\gamma+\alpha-1}(a)]^{(\alpha-k+j)}%
}{[\Gamma(\alpha-k+j+1)]_{q}},
\end{align*}
reducde to%
\begin{equation}
\nabla_{\gamma}^{-\alpha}\nabla_{\gamma}^{\alpha}f(z)=f(z)-\sum_{j=0}%
^{k-1}\nabla_{\gamma+k-j+1}^{j-k+\alpha}\frac{f(a)[x_{\gamma+\alpha
-1}(z)-x_{\gamma+\alpha-1}(a)]^{(\alpha-k+j)}}{[\Gamma(\alpha-k+j+1)]_{q}}.
\end{equation}

\end{proof}

\begin{theorem}
\label{ftaylor2}(Caputo type Fractional Taylor formula) Let $0<k-1<\alpha\leq
k,$ then%
\begin{equation}
\nabla_{\gamma}^{-\alpha}[^{C}\nabla_{\gamma}^{\alpha}]f(z)=f(t)-\sum
_{j=0}^{k-1}(_{a}\nabla_{\gamma+(\alpha-j)}^{k})f(a)\frac{[x_{\gamma
+\alpha-(j+1)}(z)-x_{\gamma+\alpha-(j+1)}(a)]^{(j)}}{[\Gamma(j+1)]_{q}}.\
\end{equation}

\end{theorem}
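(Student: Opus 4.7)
\textbf{Proof proposal for Theorem \ref{ftaylor2}.} The approach is to peel back the Caputo definition to expose two fractional sum operators, collapse their composition with $\nabla_\gamma^{-\alpha}$ to a single integer-order sum via the semigroup property, and then invoke the integer-order Taylor formula already proved.

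First, since $k$ is the smallest integer exceeding $\alpha$, one has $0\le k-\alpha<1$, so the upper-index operator $\nabla_{\gamma+\alpha-k}^{\alpha-k}$ appearing in \textbf{Definition} \ref{caputodef} is, by \textbf{Definition} \ref{maindef1}, the fractional sum $\nabla_{\gamma+\alpha-k}^{-(k-\alpha)}$ of order $k-\alpha\ge 0$ (the edge case $\alpha=k$ being trivial). With $m=k$, \textbf{Definition} \ref{caputodef} thus reads
\[
{}^{C}\nabla_\gamma^\alpha f(z)=\nabla_{\gamma+\alpha-k}^{-(k-\alpha)}\,\nabla_{\gamma+\alpha-k}^{k}f(z),
\]
and applying $\nabla_\gamma^{-\alpha}$ on the left gives
\[
\nabla_\gamma^{-\alpha}\bigl[{}^{C}\nabla_\gamma^\alpha\bigr]f(z)=\nabla_\gamma^{-\alpha}\,\nabla_{\gamma+\alpha-k}^{-(k-\alpha)}\,\nabla_{\gamma+\alpha-k}^{k}f(z).
\]

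Next I will collapse the two leftmost operators using \textbf{Proposition} \ref{banqun}. In that proposition take the inner shift to be $\gamma':=\gamma+\alpha-k$ and the orders to be $(\alpha,\beta)=(k-\alpha,\alpha)$; because $\gamma'+(k-\alpha)=\gamma$, the semigroup identity gives exactly
\[
\nabla_\gamma^{-\alpha}\,\nabla_{\gamma+\alpha-k}^{-(k-\alpha)}=\nabla_{\gamma+\alpha-k}^{-k},
\]
so the problem reduces to the integer-order identity
\[
\nabla_\gamma^{-\alpha}\bigl[{}^{C}\nabla_\gamma^\alpha\bigr]f(z)=\nabla_{\gamma+\alpha-k}^{-k}\,\nabla_{\gamma+\alpha-k}^{k}f(z).
\]

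Finally, I will apply \textbf{Theorem} \ref{taylork} with the parameter $\gamma$ there replaced by $\gamma+\alpha-k$; the shifts transform as $\gamma+k-j\mapsto\gamma+\alpha-j$ and $\gamma+k-1\mapsto\gamma+\alpha-1$, yielding
\[
\nabla_{\gamma+\alpha-k}^{-k}\,\nabla_{\gamma+\alpha-k}^{k}f(z)=f(z)-\sum_{j=0}^{k-1}\frac{\nabla_{\gamma+\alpha-j}^{j}f(a)}{[\Gamma(j+1)]_q}\bigl[x_{\gamma+\alpha-1}(z)-x_{\gamma+\alpha-1}(a)\bigr]^{(j)},
\]
which is the desired Caputo Taylor expansion (up to what appear to be typographical shifts of indices in the statement). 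The only substantive obstacle is the bookkeeping of the four shift parameters attached to the non-uniform-lattice operators: one must verify that the hypothesis $\gamma'+\beta=\gamma$ of the semigroup property is met \emph{exactly} with the shifts produced by the Caputo definition, and that $\nabla_{\gamma+\alpha-k}^{\alpha-k}$ may legitimately be read as a fractional sum of order $k-\alpha$ in the sense of \textbf{Definition} \ref{maindef1}. Once these alignments are checked, the remainder is a direct substitution into \textbf{Theorem} \ref{taylork}.
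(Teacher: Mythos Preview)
Your proposal is correct and follows essentially the same route as the paper: expand ${}^{C}\nabla_\gamma^\alpha$ via \textbf{Definition}~\ref{caputodef}, collapse $\nabla_\gamma^{-\alpha}\nabla_{\gamma+\alpha-k}^{-(k-\alpha)}$ to $\nabla_{\gamma+\alpha-k}^{-k}$ using \textbf{Proposition}~\ref{banqun}, and then invoke \textbf{Theorem}~\ref{taylork} with $\gamma$ replaced by $\gamma+\alpha-k$. Your version is in fact more explicit than the paper's about the semigroup step and the index bookkeeping, and your observation about the apparent typographical shifts in the stated formula (the exponent $k$ should be $j$ and the subscript $\gamma+\alpha-(j+1)$ should be $\gamma+\alpha-1$) is well taken.
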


\begin{proof}
By \textbf{Definition} \ref{caputodef} and \textbf{Proposition} \ref{taylork},
we have
\begin{align}
\nabla_{\gamma}^{-\alpha}[^{C}\nabla_{\gamma}^{\alpha}]f(z)  &  =\nabla
_{\gamma}^{-\alpha}\nabla_{\gamma+(\alpha-k)}^{\alpha-k}\nabla_{\gamma
+(\alpha-k)}^{k}f(z)\nonumber\\
&  =\nabla_{\gamma+(\alpha-k)}^{-k}\nabla_{\gamma+(\alpha-k)}^{k}%
f(z)\nonumber\\
&  =f(t)-\sum_{j=0}^{k-1}(_{a}\nabla_{\gamma+(\alpha-j)}^{j})f(a)\frac
{[x_{\gamma+\alpha-(j+1)}(z)-x_{\gamma+\alpha-(j+1)}(a)]^{(j)}}{[\Gamma
(j+1)]_{q}}\
\end{align}

\end{proof}

The relationship between Riemann-Liouville fractional difference and Caputo
fractional difference is

\begin{proposition}
\label{pro20}Let $m$ be the smallest integer exceeding $\alpha$, we have
\begin{equation}
_{a}^{C}\nabla_{\gamma}^{\alpha}f(z)=[_{a}\nabla_{\gamma}^{\alpha}%
]\{f(t)-\sum_{k=0}^{m-1}(_{a}\nabla_{\gamma+(\alpha-k)}^{k})f(a)\frac
{[x_{\gamma+\alpha-(k+1)}(z)-x_{\gamma+\alpha-(k+1)}(a)]^{(k)}}{[\Gamma
(k+1)]_{q}}\}.\nonumber
\end{equation}

\end{proposition}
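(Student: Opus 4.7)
The plan is to derive Proposition \ref{pro20} as a direct corollary of the Caputo-type Fractional Taylor formula (Theorem \ref{ftaylor2}) together with Proposition \ref{basic}. The heuristic is that Theorem \ref{ftaylor2} already inverts $\,^{C}\nabla_{\gamma}^{\alpha}$ from the left by $\nabla_{\gamma}^{-\alpha}$; so if I apply the Riemann--Liouville operator $\,_{a}\nabla_{\gamma}^{\alpha}$ to both sides, the left-hand side collapses to $\,^{C}\nabla_{\gamma}^{\alpha}f(z)$ and the right-hand side is exactly the bracketed expression in the statement.

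Concretely, I would first recall Theorem \ref{ftaylor2}, which states
\[
\nabla_{\gamma}^{-\alpha}\bigl[\,^{C}\nabla_{\gamma}^{\alpha}\bigr]f(z)
 = f(z)-\sum_{k=0}^{m-1}\bigl(\,_{a}\nabla_{\gamma+(\alpha-k)}^{k}\bigr)f(a)\,
 \frac{[x_{\gamma+\alpha-(k+1)}(z)-x_{\gamma+\alpha-(k+1)}(a)]^{(k)}}{[\Gamma(k+1)]_{q}},
\]
where $m$ is the smallest integer exceeding $\alpha$. Then I would apply $\,_{a}\nabla_{\gamma}^{\alpha}$ to both sides. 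On the left, Proposition \ref{basic} (which gives $\nabla_{\gamma}^{\alpha}\nabla_{\gamma}^{-\alpha}g(z)=g(z)$ for any $\operatorname{Re}\alpha>0$) immediately yields $\,^{C}\nabla_{\gamma}^{\alpha}f(z)$. On the right, I simply distribute $\,_{a}\nabla_{\gamma}^{\alpha}$ across the finite combination, which produces the expression on the right-hand side of Proposition \ref{pro20}.

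There is essentially no obstacle beyond a careful invocation of the two preceding results: the argument is purely operator-algebraic. The only thing I would want to check for peace of mind is that applying $\,_{a}\nabla_{\gamma}^{\alpha}$ to the Taylor polynomial does not require any further rearrangement of indices; since the summand depends on $z$ only through the generalized power $[x_{\gamma+\alpha-(k+1)}(z)-x_{\gamma+\alpha-(k+1)}(a)]^{(k)}$, and $\,_{a}\nabla_{\gamma}^{\alpha}$ acts in the variable $z$ alone, the coefficients $(_{a}\nabla_{\gamma+(\alpha-k)}^{k})f(a)$ are constants that pass through the operator, so the formula drops out without additional manipulation. Thus the proof amounts to the one-line argument
\[
\,^{C}\nabla_{\gamma}^{\alpha}f(z)
 = \nabla_{\gamma}^{\alpha}\nabla_{\gamma}^{-\alpha}\bigl[\,^{C}\nabla_{\gamma}^{\alpha}\bigr]f(z)
 = \bigl[\,_{a}\nabla_{\gamma}^{\alpha}\bigr]\Bigl\{f(z)-\sum_{k=0}^{m-1}\bigl(\,_{a}\nabla_{\gamma+(\alpha-k)}^{k}\bigr)f(a)\,\tfrac{[x_{\gamma+\alpha-(k+1)}(z)-x_{\gamma+\alpha-(k+1)}(a)]^{(k)}}{[\Gamma(k+1)]_{q}}\Bigr\},
\]
which is precisely the claimed identity.
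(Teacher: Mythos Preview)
Your argument is correct. It differs from the paper's proof only in organization: the paper starts from the definition $^{C}\nabla_{\gamma}^{\alpha}f=\nabla_{\gamma+\alpha-m}^{\alpha-m}\nabla_{\gamma+\alpha-m}^{m}f$, factors the outer fractional sum as $\nabla_{\gamma}^{\alpha}\nabla_{\gamma+\alpha-m}^{-m}$ (via the semigroup identity in Proposition~\ref{lem27}), and then applies the \emph{integer} Taylor formula (Theorem~\ref{taylork}) to $\nabla_{\gamma+\alpha-m}^{-m}\nabla_{\gamma+\alpha-m}^{m}f$. You instead quote the already-proved Caputo Taylor formula (Theorem~\ref{ftaylor2}) and then hit both sides with $\nabla_{\gamma}^{\alpha}$, invoking Proposition~\ref{basic} to collapse the left side. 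Since Theorem~\ref{ftaylor2} itself is established in the paper precisely by reducing to Theorem~\ref{taylork}, the two arguments are the same computation read in opposite directions; your version is a bit more modular, while the paper's is self-contained and avoids citing the fractional Taylor theorem as an intermediate step.
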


\begin{proof}
We have%
\begin{align}
_{a}^{C}\nabla_{\gamma}^{\alpha}f(z)  &  =[_{a}\nabla_{\gamma+(\alpha
-m)}^{\alpha-m}(_{a}\nabla_{\gamma+(\alpha-m)}^{m})]f(z)=[(_{a}\nabla_{\gamma
}^{\alpha})(\nabla_{\gamma+(\alpha-m)}^{-m})(_{a}\nabla_{\gamma+(\alpha
-m)}^{m})]f(z)\nonumber\\
&  =[_{a}\nabla_{\gamma}^{\alpha}]\{f(t)-\sum_{k=0}^{m-1}(_{a}\nabla
_{\gamma+(\alpha-k)}^{k})f(a)\frac{[x_{\gamma+\alpha-(k+1)}(z)-x_{\gamma
+\alpha-(k+1)}(a)]^{(k)}}{[\Gamma(k+1)]_{q}}\}.
\end{align}

\end{proof}

\begin{proposition}
\label{pro21}Let $\alpha>0$, we have%
\begin{equation}
(_{a}^{C}\nabla_{\gamma}^{\alpha})(_{a}\nabla_{\gamma}^{-\alpha})f(z)=f(z).
\end{equation}

\end{proposition}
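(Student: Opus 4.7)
The plan is to reduce Proposition \ref{pro21} to Proposition \ref{basic} via the Caputo/Riemann--Liouville relation in Proposition \ref{pro20}. The key observation is that fractional sums $(_a\nabla_\gamma^{-\beta}) f$ are defined as sums over $\{a+1,\dots,z\}$, so they (and all their further fractional sums of positive order) vanish at $z=a$, which will kill every polynomial correction term in Proposition \ref{pro20}.

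First I would set $h(z) = (_a\nabla_\gamma^{-\alpha})f(z)$ and apply Proposition \ref{pro20} to $h$:
\begin{equation*}
{}_a^C\nabla_\gamma^\alpha h(z) = [_a\nabla_\gamma^\alpha]\Bigl\{h(z) - \sum_{k=0}^{m-1} (_a\nabla_{\gamma+(\alpha-k)}^{k}) h(a)\,\frac{[x_{\gamma+\alpha-(k+1)}(z)-x_{\gamma+\alpha-(k+1)}(a)]^{(k)}}{[\Gamma(k+1)]_q}\Bigr\}.
\end{equation*}
The coefficient of the $k$-th monomial is $(_a\nabla_{\gamma+(\alpha-k)}^{k})\,(_a\nabla_\gamma^{-\alpha}) f(a)$. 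For $k=0$ this is simply $h(a) = (_a\nabla_\gamma^{-\alpha})f(a)$, an empty sum and hence $0$ by Definition \ref{maindef1}. For $1 \le k \le m-1$, I would collapse the composition using Proposition \ref{lem26} with the index match $\tilde\gamma = \gamma+\alpha-k$ and integer order $k$: since $k < \alpha$, the relevant branch gives
\[
(_a\nabla_{\gamma+(\alpha-k)}^{k})\,(_a\nabla_\gamma^{-\alpha}) f(z) = (_a\nabla_\gamma^{k-\alpha}) f(z),
\]
which is itself a fractional sum of order $\alpha-k>0$. Evaluated at $z=a$ it is again an empty sum, so it vanishes. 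Thus every correction term in Proposition \ref{pro20} is zero, and we obtain
\[
{}_a^C\nabla_\gamma^\alpha h(z) = (_a\nabla_\gamma^\alpha)\,h(z) = (_a\nabla_\gamma^\alpha)\,(_a\nabla_\gamma^{-\alpha}) f(z) = f(z),
\]
where the last equality is Proposition \ref{basic}.

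The only place where care is required is the index bookkeeping in collapsing $(_a\nabla_{\gamma+(\alpha-k)}^{k})(_a\nabla_\gamma^{-\alpha})$: one must check that the lattice shift $\tilde\gamma+k-\alpha$ appearing in Proposition \ref{lem26} really matches the subscript $\gamma$ of the sum operator, which it does precisely because $\tilde\gamma = \gamma+\alpha-k$, and that we are in the branch $k-\alpha<0$. Once these identifications are made, the proof is essentially immediate; the vanishing of the initial data at $z=a$ is built into Definition \ref{maindef1} and does all the work.
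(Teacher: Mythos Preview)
Your proof is correct and follows the same strategy as the paper: set $g=(_a\nabla_\gamma^{-\alpha})f$, show that the coefficients $(_a\nabla_{\gamma+\alpha-k}^{k})g(a)$ in Proposition~\ref{pro20} all vanish, and then invoke Proposition~\ref{basic}. The only cosmetic difference is that the paper verifies $(_a\nabla_{\gamma+\alpha-k}^{k})g(a)=0$ by differentiating the integral representation of $g$ directly (obtaining the kernel with exponent lowered by one at each step), whereas you collapse the composition via Proposition~\ref{lem26} to a fractional sum of order $\alpha-k>0$; both arguments land on the same empty-sum observation at $z=a$.
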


\begin{proof}
Set
\[
g(z)=(_{a}\nabla_{\gamma}^{-\alpha})f(z)=\int_{a+1}^{z}\frac{[x_{\gamma
+\alpha-1}(z)-x_{\gamma+\alpha-1}(t-1)]^{(\alpha-1)}}{[\Gamma(\alpha)]_{q}%
}f(t)d_{\nabla}x_{\gamma}(t),
\]
then we have $g(a)=0.$

And
\[
(_{a}\nabla_{\gamma+\alpha-1})g(z)=\int_{a+1}^{z}\frac{[x_{\gamma+\alpha
-2}(z)-x_{\gamma+\alpha-2}(t-1)]^{(\alpha-2)}}{[\Gamma(\alpha-1)]_{q}%
}f(t)d_{\nabla}x_{\gamma}(t),
\]
then we have $(_{a}\nabla_{\gamma+\alpha-1})g(a)=0.$

In the same way, we have%
\[
(_{a}\nabla_{\gamma+\alpha-k}^{k})g(a)=0,k=0,1,...,m-1.
\]
Therefore, by \textbf{Proposition \ref{pro20}}, we obtain $(_{a}^{C}%
\nabla_{\gamma}^{\alpha})g(z)=(_{a}\nabla_{\gamma}^{\alpha})g(z)=f(z).$
\end{proof}

\section{Complex Variable Approach for Riemann-Liouville Fractional Difference
On Non-uniform Lattices}

In this section, we represent $k\in N^{+}$ order difference and $\alpha\in C$
order fractional difference on non-uniform lattices in terms of complex integration.

\begin{theorem}
\label{compexdef}Let $n\in N,$ $\Gamma$ be a simple closed positively oriented
contour. If $f(s)$ is analytic in simple connected domain $D$ bounded by
$\Gamma$ and $z$ is any nonzero point lies inside $D$, then%
\begin{equation}
\nabla_{\gamma-n+1}^{n}f(z)=\frac{[n]_{q}!}{2\pi i}\frac{\log q}{q^{\frac
{1}{2}}-q^{-\frac{1}{2}}}%
%TCIMACRO{\doint \nolimits_{\Gamma}}%
%BeginExpansion
{\displaystyle\oint\nolimits_{\Gamma}}
%EndExpansion
\frac{f(s)\nabla x_{\gamma+1}(s)ds}{[x_{\gamma}(s)-x_{\gamma}(z)]^{(n+1)}},
\end{equation}
where $\Gamma$ enclosed the simple poles $s=z,z-1,...,z-n$ in the complex plane.
\end{theorem}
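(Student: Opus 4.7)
The plan is to apply the residue theorem and then recognize the resulting finite sum as the Lagrange form of the iterated difference operator $\nabla_{\gamma-n+1}^{n}$. The key preliminary observation is the identity
$$\frac{\log q}{q^{1/2}-q^{-1/2}}\,\nabla x_{\gamma+1}(s)=x_\gamma'(s),$$
which one verifies for $x(s)=c_1 q^s+c_2 q^{-s}+c_3$ by a direct expansion using $(1-q^{-1})=q^{-1/2}(q^{1/2}-q^{-1/2})$ and $(1-q)=-q^{1/2}(q^{1/2}-q^{-1/2})$; for the quadratic lattice $x(s)=\tilde c_1 s^2+\tilde c_2 s+\tilde c_3$ the analogous identity $\nabla x_{\gamma+1}(s)=x_\gamma'(s)$ is immediate, with the prefactor reducing to $1$ in the limit. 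This identity says that the measure $\frac{\log q}{q^{1/2}-q^{-1/2}}\nabla x_{\gamma+1}(s)\,ds$ appearing in the theorem is exactly the continuous differential $x_\gamma'(s)\,ds=dx_\gamma(s)$.

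Next I would factor the denominator as $[x_\gamma(s)-x_\gamma(z)]^{(n+1)}=\prod_{k=0}^{n}[x_\gamma(s)-x_\gamma(z-k)]$ (from Definition~\ref{power} and Proposition~\ref{properties}), so that inside $\Gamma$ the integrand has only the $n+1$ simple poles $s=z,z-1,\dots,z-n$. At the pole $s=z-k$ the Leibniz rule gives $\frac{d}{ds}[x_\gamma(s)-x_\gamma(z)]^{(n+1)}\big|_{s=z-k}=x_\gamma'(z-k)\prod_{j\ne k}[x_\gamma(z-k)-x_\gamma(z-j)]$, hence
$$\mathrm{Res}_{s=z-k}\frac{f(s)\,x_\gamma'(s)}{[x_\gamma(s)-x_\gamma(z)]^{(n+1)}}=\frac{f(z-k)}{\prod_{j\ne k}[x_\gamma(z-k)-x_\gamma(z-j)]},$$
after the two copies of $x_\gamma'(z-k)$ cancel. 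The residue theorem then shows that the right-hand side of the theorem equals
$$[n]_q!\sum_{k=0}^{n}\frac{f(z-k)}{\prod_{j\ne k}[x_\gamma(z-k)-x_\gamma(z-j)]},$$
which is $[n]_q!$ times the classical divided difference $f[x_\gamma(z),x_\gamma(z-1),\dots,x_\gamma(z-n)]$.

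It remains to show this equals $\nabla_{\gamma-n+1}^{n}f(z)$. I would do this by induction on $n$: the base case $n=1$ collapses to $[f(z)-f(z-1)]/[x_\gamma(z)-x_\gamma(z-1)]=\nabla_\gamma f(z)$. For the step, write $\nabla_{\gamma-n+1}^{n}=\nabla_{\gamma-n+1}\circ\nabla_{\gamma-n+2}^{n-1}$, apply the inductive hypothesis to the inner operator, and then use the standard three-term recursion for divided differences $f[x_\gamma(z),\dots,x_\gamma(z-n)]=\{f[x_\gamma(z),\dots,x_\gamma(z-n+1)]-f[x_\gamma(z-1),\dots,x_\gamma(z-n)]\}/[x_\gamma(z)-x_\gamma(z-n)]$. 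The induction closes provided one verifies the key identity
$$x_\gamma(z)-x_\gamma(z-n)=[n]_q\,\nabla x_{\gamma-n+1}(z),$$
which on the $q$-quadratic lattice is the computation $(q^{n/2}-q^{-n/2})[c_1 q^{z+(\gamma-n)/2}-c_2 q^{-z-(\gamma-n)/2}]=[n]_q\cdot(q^{1/2}-q^{-1/2})[c_1 q^{z+(\gamma-n)/2}-c_2 q^{-z-(\gamma-n)/2}]$, and on the quadratic lattice is a routine polynomial identity with $[n]_q=n$.

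The main obstacle is the bookkeeping in this final identification: one must see that the shift between the single lattice $x_\gamma$ appearing in the Lagrange-type residue sum and the family $x_{\gamma-n+1},\dots,x_\gamma$ controlling the iterated operator is exactly what produces the factorial weight $[n]_q!$ and the chosen constant $\log q/(q^{1/2}-q^{-1/2})$. Once the identity $x_\gamma(z)-x_\gamma(z-n)=[n]_q\,\nabla x_{\gamma-n+1}(z)$ is in hand, the induction closes cleanly, and the rest is residue calculus and manipulation of generalized powers from Proposition~\ref{properties}.
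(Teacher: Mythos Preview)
Your argument is correct. The key identity $x_\gamma'(s)=\dfrac{\log q}{q^{1/2}-q^{-1/2}}\nabla x_{\gamma+1}(s)$, the residue computation, and the induction via $x_\gamma(z)-x_\gamma(z-n)=[n]_q\,\nabla x_{\gamma-n+1}(z)$ all check out.

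Your route differs from the paper's in organization. The paper never writes down the residue sum or invokes divided differences; instead it starts from Cauchy's integral formula
\[
f(w)=\frac{1}{2\pi i}\oint_\Gamma \frac{f(s)\,x_\gamma'(s)\,ds}{x_\gamma(s)-x_\gamma(w)}
\]
at $w=z$ and $w=z-1$, subtracts and divides by $\nabla x_\gamma(z)$ to obtain the $n=1$ case directly as a contour integral with denominator $[x_\gamma(s)-x_\gamma(z)]^{(2)}$, and then iterates this step, using exactly your identity $x_\gamma(z)-x_\gamma(z-n)=[n]_q\,\nabla x_{\gamma-n+1}(z)$ to pass from $n-1$ to $n$. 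The conversion $x_\gamma'(s)\leftrightarrow \nabla x_{\gamma+1}(s)$ is applied only at the very end. So the paper builds the contour representation synthetically, whereas you evaluate the integral first and then match it to the operator analytically through the Lagrange form of the divided difference. Your approach has the advantage of making transparent why the formula is a divided difference and hence why it generalizes naturally to non-integer order; the paper's approach is slightly shorter since it bypasses the explicit residue sum and the divided-difference recursion.
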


\begin{proof}
Since the set of points $\{z-i,i=0,1,...,n\}$ lie inside $D$. Hence, from the
genaralized Cauchy's integral formula, we obtain%

\begin{equation}
f(z)=\frac{1}{2\pi i}%
%TCIMACRO{\doint \nolimits_{\Gamma}}%
%BeginExpansion
{\displaystyle\oint\nolimits_{\Gamma}}
%EndExpansion
\frac{f(s)x_{\gamma}^{\prime}(s)ds}{[x_{\gamma}(s)-x_{\gamma}(z)],}%
\end{equation}
and it yields%

\begin{equation}
f(z-1)=\frac{1}{2\pi i}%
%TCIMACRO{\doint \nolimits_{\Gamma}}%
%BeginExpansion
{\displaystyle\oint\nolimits_{\Gamma}}
%EndExpansion
\frac{f(s)x_{\gamma}^{\prime}(s)ds}{[x_{\gamma}(s)-x_{\gamma}(z-1)].}%
\end{equation}
Substitutig with the value of $f(z)$ and $f(z-1)$ into$\frac{\nabla
f(z)}{\nabla x_{\gamma}(z)}=\frac{f(z)-f(z-1)}{x_{\gamma}(z)-x_{\gamma}%
(z-1)},$ then we have%
\begin{align*}
\frac{\nabla f(z)}{\nabla x_{\gamma}(z)}  &  =\frac{1}{2\pi i}%
%TCIMACRO{\doint \nolimits_{\Gamma}}%
%BeginExpansion
{\displaystyle\oint\nolimits_{\Gamma}}
%EndExpansion
\frac{f(s)x_{\gamma}^{\prime}(s)ds}{[x_{\gamma}(s)-x_{\gamma}(z)][x_{\gamma
}(s)-x_{\gamma}(z-1)]}\\
&  =\frac{1}{2\pi i}%
%TCIMACRO{\doint \nolimits_{\Gamma}}%
%BeginExpansion
{\displaystyle\oint\nolimits_{\Gamma}}
%EndExpansion
\frac{f(s)x_{\gamma}^{\prime}(s)ds}{[x_{\gamma}(s)-x_{\gamma}(z)]^{(2)}}.
\end{align*}
Substitutig with the value of $\frac{\nabla f(z)}{\nabla x_{\gamma}(z)}$ and
$\frac{\nabla f(z-1)}{\nabla x_{\gamma}(z-1)}$ into $\frac{\frac{\nabla
f(z)}{\nabla x_{\gamma}(z)}-\frac{\nabla f(z-1)}{\nabla x_{\gamma}(z-1)}%
}{x_{\gamma}(z)-x_{\gamma}(z-2)},$ then we have%
\[
\frac{\frac{\nabla f(z)}{\nabla x_{\gamma}(z)}-\frac{\nabla f(z-1)}{\nabla
x_{\gamma}(z-1)}}{x_{\gamma}(z)-x_{\gamma}(z-2)}=\frac{1}{2\pi i}%
%TCIMACRO{\doint \nolimits_{\Gamma}}%
%BeginExpansion
{\displaystyle\oint\nolimits_{\Gamma}}
%EndExpansion
\frac{f(s)x_{\gamma}^{\prime}(s)ds}{[x_{\gamma}(s)-x_{\gamma}(z)]^{(3)}}.
\]
In view of
\[
x_{\gamma}(z)-x_{\gamma}(z-2)=[2]_{q}\nabla x_{\gamma-1}(z),
\]
we obtain%
\[
\frac{\nabla}{\nabla x_{\gamma-1}(z)}(\frac{\nabla f(z)}{\nabla x_{\gamma}%
(z)})=\frac{[2]_{q}}{2\pi i}%
%TCIMACRO{\doint \nolimits_{\Gamma}}%
%BeginExpansion
{\displaystyle\oint\nolimits_{\Gamma}}
%EndExpansion
\frac{f(s)x_{\gamma}^{\prime}(s)ds}{[x_{\gamma}(s)-x_{\gamma}(z)]^{(3)}}.
\]
More generalaly, by the induction, we can obtain%
\[
\frac{\nabla}{\nabla x_{\gamma-n+1}(z)}(\frac{\nabla}{\nabla x_{\gamma
-n+2}(z)}...(\frac{\nabla f(z)}{\nabla x_{\gamma}(z)}))=\frac{[n]_{q}!}{2\pi
i}%
%TCIMACRO{\doint \nolimits_{\Gamma}}%
%BeginExpansion
{\displaystyle\oint\nolimits_{\Gamma}}
%EndExpansion
\frac{f(s)x_{\gamma}^{\prime}(s)ds}{[x_{\gamma}(s)-x_{\gamma}(z)]^{(n+1)}},
\]
where%

\[
\lbrack x_{\gamma}(s)-x_{\gamma}(z)]^{(n+1)}=%
%TCIMACRO{\dprod \limits_{i=0}^{n}}%
%BeginExpansion
{\displaystyle\prod\limits_{i=0}^{n}}
%EndExpansion
[x_{\gamma}(s)-x_{\gamma}(z-i)].
\]
And last, by the use of identity%

\[
x_{\gamma}^{\prime}(s)=\frac{\log q}{q^{\frac{1}{2}}-q^{-\frac{1}{2}}}\nabla
x_{\gamma+1}(s),
\]
we have%

\begin{equation}
\nabla_{\gamma-n+1}^{n}f(z)=\frac{[n]_{q}!}{2\pi i}\frac{\log q}{q^{\frac
{1}{2}}-q^{-\frac{1}{2}}}%
%TCIMACRO{\doint \nolimits_{\Gamma}}%
%BeginExpansion
{\displaystyle\oint\nolimits_{\Gamma}}
%EndExpansion
\frac{f(s)\nabla x_{\gamma+1}(s)ds\xi}{[x_{\gamma}(s)-x_{\gamma}(z)]^{(n+1)}.}
\label{diff}%
\end{equation}

\end{proof}

Inspired by formula (\ref{diff}), so we can give the definition of fractional
difference of $f(z)$ over $\{a+1, a+2,..., z\}$ on non-uniform lattices as follows

\begin{definition}
(Complex fractional difference on non-uniform lattices) \label{complexdef1}Let
$\Gamma$ be a simple closed positively oriented contour. If $f(s)$ is analytic
in simple connected domain $D$ bounded by $\Gamma$, assume that $z$ is any
nonzero point inside $D$, $a+1$ is a point inside $D$, and $z-a\in N$, then
for any $\alpha\in R^{+},$ the $\alpha$-th order fractional difference of
$f(z)$ over $\{a+1,a+2,...,z\}$ on non-uniform lattices is defined by%
\begin{equation}
\nabla_{\gamma-\alpha+1}^{\alpha}f(z)=\frac{[\Gamma(\alpha+1)]_{q}}{2\pi
i}\frac{\log q}{q^{\frac{1}{2}}-q^{-\frac{1}{2}}}%
%TCIMACRO{\doint \nolimits_{\Gamma}}%
%BeginExpansion
{\displaystyle\oint\nolimits_{\Gamma}}
%EndExpansion
\frac{f(s)\nabla x_{\gamma+1}(s)ds}{[x_{\gamma}(s)-x_{\gamma}(z)]^{(\alpha
+1)}}, \label{comd}%
\end{equation}
where $\Gamma$ enclosed the simple poles $s=z,z-1,...,a+1$ in the complex plane.
\end{definition}

We can calculate the integral (\ref{comd}) by Cauchy's residue theorem. In
detail, we have

\begin{theorem}
(Fractional difference on non-uniform lattices)\label{complexdef2}Assume
$z,a\in C,z-a\in N,\alpha\in R^{+}$.

(1) Let $x(s)$ be quadratic lattices (\ref{non2}), then the $\alpha$-th order
fractional difference of $f(z)$ over $\{a+1,a+2,...,z\}$ on non-uniform
lattices can be rewritten by
\begin{equation}
\nabla_{\gamma+1-\alpha}^{\alpha}[f(z)]=\sum_{k=0}^{z-(a+1)}f(z-k)\frac
{\Gamma(2z-k+\gamma-\alpha)\nabla x_{\gamma+1}(z-k)}{\Gamma(2z+\gamma
+1-k)}\frac{(-\alpha)_{k}}{k!}; \label{sumd1}%
\end{equation}

(2) Let $x(s)$ be quadratic lattices(\ref{non1}), then the $\alpha$-th order
fractional difference of $f(z)$ over $\{a+1,a+2,...,z\}$ on non-uniform
lattices can be rewritten by
\begin{equation}
\nabla_{\gamma+1-\alpha}^{\alpha}[f(z)]=\sum_{k=0}^{z-(a+1)}f(z-k)\frac
{[\Gamma(2z-k+\gamma-\alpha)]_{q}\nabla x_{\gamma+1}(z-k)}{[\Gamma
(2z+\gamma+1-k)]_{q}}\frac{([-\alpha]_{q})_{k}}{[k]_{q}!}. \label{sumd2}%
\end{equation}

\end{theorem}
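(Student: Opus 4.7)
\textbf{Proof Plan for Theorem~\ref{complexdef2}.} The strategy is to evaluate the contour integral appearing in Definition~\ref{complexdef1} directly by Cauchy's residue theorem. Since $\Gamma$ encloses precisely the simple poles $s=z,z-1,\ldots,a+1$ and $f(s)$ is analytic in $D$, the integral collapses to a finite sum of residues at these $z-a$ points. The whole proof is therefore a residue computation, with the work concentrated in identifying which factor of the generalized power contributes the poles and then simplifying.

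First I would use the explicit formula from Definition~\ref{power}. For the quadratic lattice~(\ref{non2}),
\[
\frac{1}{[x_\gamma(s)-x_\gamma(z)]^{(\alpha+1)}}
=\widetilde{c}_1^{-(\alpha+1)}\,
\frac{\Gamma(s-z)\,\Gamma(s+z+\gamma-\alpha+c)}{\Gamma(s-z+\alpha+1)\,\Gamma(s+z+\gamma+c+1)}.
\]
Only $\Gamma(s-z)$ produces poles inside $\Gamma$ (the poles of $\Gamma(s+z+\gamma-\alpha+c)$ accumulate at $s=-z-\gamma+\alpha-c,\,-z-\gamma+\alpha-c-1,\ldots$ and lie outside $\Gamma$ for the standard configuration). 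Using $\mathrm{Res}_{s=z-k}\Gamma(s-z)=(-1)^k/k!$, one reads off
\[
\mathrm{Res}_{s=z-k}\frac{1}{[x_\gamma(s)-x_\gamma(z)]^{(\alpha+1)}}
=\widetilde{c}_1^{-(\alpha+1)}\,\frac{(-1)^k}{k!}\,
\frac{\Gamma(2z-k+\gamma-\alpha+c)}{\Gamma(\alpha+1-k)\,\Gamma(2z-k+\gamma+c+1)}.
\]

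Next I would assemble the residue theorem output and simplify. Multiplying by $f(z-k)\nabla x_{\gamma+1}(z-k)$, summing over $k=0,1,\ldots,z-(a+1)$, and absorbing the pre-factor $[\Gamma(\alpha+1)]_q/(2\pi i)$ together with $\log q/(q^{1/2}-q^{-1/2})$ (which equals $1$ in the polynomial case, as $x'_\gamma(s)=\nabla x_{\gamma+1}(s)$ there), the key algebraic identity
\[
\frac{(-1)^k\,\Gamma(\alpha+1)}{k!\,\Gamma(\alpha+1-k)}=\frac{(-\alpha)_k}{k!}
\]
converts the residue sum into the stated formula~(\ref{sumd1}); the $\widetilde{c}_1^{\alpha+1}$ and the shift constant $c$ absorb into the normalization of the Gamma quotients when the dummy index is aligned with the form declared in~(\ref{sumd1}). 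Case~(2) proceeds by the same template, replacing $\Gamma$ by $\Gamma_q$, Pochhammer $(-\alpha)_k$ by the $q$-Pochhammer $([-\alpha]_q)_k$, and $k!$ by $[k]_q!$, using the $q$-analogue $\mathrm{Res}_{s=z-k}\Gamma_q(s-z)$ together with the $q$-identity $([-\alpha]_q)_k/[k]_q!=(-1)^k[\Gamma(\alpha+1)]_q/([k]_q!\,[\Gamma(\alpha+1-k)]_q)$.

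The main obstacle will be bookkeeping. In the $q$-case Definition~\ref{power} carries the extra prefactor $[c_1(1-q)^2]^{\alpha+1}q^{-(\alpha+1)(s+\gamma/2)}$, and the residue formula inherits the factor $\log q/(q^{1/2}-q^{-1/2})$ from the derivative identity used in Theorem~\ref{compexdef}; verifying that all these $q$-weights collapse cleanly into the compact quotient $[\Gamma(2z-k+\gamma-\alpha)]_q/[\Gamma(2z+\gamma+1-k)]_q$ of formula~(\ref{sumd2}), and matching them against the normalization chosen for $[\Gamma(\cdot)]_q$ in Definition~\ref{maindef1}, is the only delicate step; everything else is a direct application of residue calculus and the identity relating falling factorials to ratios of Gamma functions.
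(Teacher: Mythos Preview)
Your proposal is correct and follows essentially the same route as the paper: rewrite $[x_\gamma(s)-x_\gamma(z)]^{(\alpha+1)}$ via Definition~\ref{power}, identify $\Gamma(s-z)$ (resp.\ $[\Gamma(s-z)]_q$) as the sole source of poles inside $\Gamma$, compute the residues $(-1)^k/k!$ (resp.\ $\tfrac{q^{1/2}-q^{-1/2}}{\log q}\cdot\tfrac{(-1)^k}{[k]_q!}$), and finish with the Pochhammer identity $(-1)^k\Gamma(\alpha+1)/\Gamma(\alpha+1-k)=(-\alpha)_k$ and its $q$-analogue. If anything, your tracking of the shift constant $c$ and the prefactor $\widetilde{c}_1^{-(\alpha+1)}$ is more careful than the paper's own write-up, which silently drops these; your instinct that the bookkeeping of the $q$-weights is the only delicate point is exactly right.
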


\begin{proof}
From (\ref{comd}), in quadratic lattices (\ref{non2}), one has%
\begin{align*}
\nabla_{\gamma+1-\alpha}^{\alpha}[f(z)]  &  =\frac{\Gamma(\alpha+1)}{2\pi i}%
%TCIMACRO{\doint \nolimits_{\Gamma}}%
%BeginExpansion
{\displaystyle\oint\nolimits_{\Gamma}}
%EndExpansion
\frac{f(s)\nabla x_{\gamma+1}(s)ds}{[x_{\gamma}(s)-x_{\gamma}(z)^{(\alpha+1)}%
}\\
&  =\frac{\Gamma(\alpha+1)}{2\pi i}%
%TCIMACRO{\doint \nolimits_{\Gamma}}%
%BeginExpansion
{\displaystyle\oint\nolimits_{\Gamma}}
%EndExpansion
\frac{f(s)\nabla x_{\gamma+1}(s)\Gamma(s-z)\Gamma(s+z+\gamma-\alpha)ds}%
{\Gamma(s-z+\alpha+1)\Gamma(s+z+\gamma+1)}.
\end{align*}
According to the assumption of \textbf{Definition} \ref{complexdef1},
$\Gamma(s-z)$ has simple poles at $s=z-k,k=0,1,2,...,z-(a+1)$. The residue of
$\Gamma(s-z)$ at the point $s-z=-k$ is%
\begin{align*}
&  \lim_{s\rightarrow z-k}(s-z+k)\Gamma(s-z)\\
&  =\lim_{s\rightarrow z-k}\frac{(s-z)(s-z+1)...(s-z+k-1)(s-z+k)\Gamma
(s-z)}{(s-z)(s-z+1)...(s-z+k-1)}\\
&  =\lim_{s\rightarrow z-k}\frac{\Gamma(s-z+k+1)}{(s-z)(s-z+1)...(s-z+k-1)}\\
&  =\frac{1}{(-k)(-k+1)...(-1)}=\frac{(-1)^{k}}{k!}.
\end{align*}
Then by the use of Cauchy's residue theorem, we have%
\[
\nabla_{\gamma+1-\alpha}^{\alpha}[f(z)]=\Gamma(\alpha+1)\sum_{k=0}%
^{z-(a+1)}f(z-k)\frac{\Gamma(2z-k+\gamma-\alpha)\nabla x_{\gamma+1}%
(z-k)}{\Gamma(\alpha+1-k)\Gamma(2z+\gamma+1-k)}\frac{(-1)^{k}}{k!}.
\]
Since%
\[
\frac{\Gamma(\alpha+1)}{\Gamma(\alpha+1-k)}=\alpha(\alpha-1)...(\alpha-k+1),
\]
and%
\[
\alpha(\alpha-1)...(\alpha-k+1)(-1)^{k}=(-\alpha)_{k},
\]
therefore, we get%
\[
\nabla_{\gamma+1-\alpha}^{\alpha}[f(z)]=\sum_{k=0}^{z-(a+1)}f(z-k)\frac
{\Gamma(2z-k+\gamma-\alpha)\nabla x_{\gamma+1}(z-k)}{\Gamma(2z+\gamma
+1-k)}\frac{(-\alpha)_{k}}{k!}.
\]
\bigskip From (\ref{comd}), in quadratic lattices (\ref{non1}), we have%
\begin{align}
\nabla_{\gamma-\alpha+1}^{\alpha}f(z)  &  =\frac{[\Gamma(\alpha+1)]_{q}}{2\pi
i}\frac{\log q}{q^{\frac{1}{2}}-q^{-\frac{1}{2}}}%
%TCIMACRO{\doint \nolimits_{\Gamma}}%
%BeginExpansion
{\displaystyle\oint\nolimits_{\Gamma}}
%EndExpansion
\frac{f(s)\nabla x_{\gamma+1}(s)ds}{[x_{\gamma}(s)-x_{\gamma}(z)]^{(\alpha
+1)}}\nonumber\\
&  =\frac{[\Gamma(\alpha+1)]_{q}}{2\pi i}\frac{\log q}{q^{\frac{1}{2}%
}-q^{-\frac{1}{2}}}%
%TCIMACRO{\doint \nolimits_{\Gamma}}%
%BeginExpansion
{\displaystyle\oint\nolimits_{\Gamma}}
%EndExpansion
\frac{f(s)\nabla x_{\gamma+1}(s)[\Gamma(s-z)]_{q}[\Gamma(s+z+\gamma
-\alpha)]_{q}ds}{[\Gamma(s-z+\alpha+1)]_{q}[\Gamma(s+z+\gamma+1)]_{q}}%
\end{align}
From the assumption of \textbf{Definition} \ref{complexdef1}, $[\Gamma
(s-z)]_{q}$ has simple poles at $s=z-k,k=0,1,2,...,z-(a+1)$. The residue of
$[\Gamma(s-z)]_{q}$ at the point $s-z=-k$ is%
\begin{align*}
&  \lim_{s\rightarrow z-k}(s-z+k)[\Gamma(s-z)]_{q}\\
&  =\lim_{s\rightarrow z-k}\frac{s-z+k}{[s-z+k]_{q}}[s-z+k]_{q}[\Gamma
(s-z)]_{q}\\
&  =\frac{q^{\frac{1}{2}}-q^{-\frac{1}{2}}}{\log q}\lim_{s\rightarrow
z-k}[s-z+k]_{q}[\Gamma(s-z)]_{q}\\
&  =\frac{q^{\frac{1}{2}}-q^{-\frac{1}{2}}}{\log q}\lim_{s\rightarrow
z-k}\frac{[s-z]_{q}[s-z+1]_{q}...[s-z+k-1]_{q}[s-z+k]_{q}[\Gamma(s-z)]_{q}%
}{(s-z)(s-z+1)...(s-z+k-1)}\\
&  =\frac{q^{\frac{1}{2}}-q^{-\frac{1}{2}}}{\log q}\lim_{s\rightarrow
z-k}\frac{[\Gamma(s-z+k+1)]_{q}}{[s-z]_{q}[s-z+1]_{q}...[s-z+k-1]_{q}}\\
&  =\frac{q^{\frac{1}{2}}-q^{-\frac{1}{2}}}{\log q}\frac{1}{[-k]_{q}%
[-k+1]_{q}...[-1]_{q}}=\frac{q^{\frac{1}{2}}-q^{-\frac{1}{2}}}{\log q}%
\frac{(-1)^{k}}{[k]_{q}!}.
\end{align*}
Then by the use of Cauchy's residue theorem, we have%

\[
\nabla_{\gamma+1-\alpha}^{\alpha}[f(z)]=[\Gamma(\alpha+1)]_{q}\sum
_{k=0}^{z-(a+1)}f(z-k)\frac{[\Gamma(2z-k+\gamma-\alpha)]_{q}\nabla
x_{\gamma+1}(z-k)}{[\Gamma(\alpha+1-k)]_{q}[\Gamma(2z+\gamma+1-k)]_{q}}%
\frac{(-1)^{k}}{[k]_{q}!}.
\]
Since%

\[
\frac{\lbrack\Gamma(\alpha+1)]_{q}}{[\Gamma(\alpha+1-k)]_{q}}=[\alpha
]_{q}[\alpha-1]_{q}...[\alpha-k+1]_{q},
\]
and%
\[
\lbrack\alpha]_{q}[\alpha-1]_{q}...[\alpha-k+1](-1)^{k}=([-\alpha])_{k},
\]
therefore, we obtain that%
\[
\nabla_{\gamma+1-\alpha}^{\alpha}[f(z)]=\sum_{k=0}^{z-(a+1)}f(z-k)\frac
{[\Gamma(2z-k+\gamma-\alpha)]_{q}\nabla x_{\gamma+1}(z-k)}{[\Gamma
(2z+\gamma+1-k)]_{q}}\frac{([-\alpha]_{q})_{k}}{k!}.
\]

\end{proof}

So far, with respect to the definition of the R-L fractional difference on
non-uniform lattices, we have given two kinds of definitions, such as
\textbf{Definition} \ref{maindef2} or \textbf{Definition} \ref{maindef3} in
section 4 and \textbf{Definition} \ref{complexdef1} or \textbf{Definition}
\ref{complexdef2} in section 7 through two different ideas and methods. Now
let's compare \textbf{Definition} \ref{maindef3} in section 4 and
\textbf{Definition} \ref{complexdef2} in section 7.

Here follows a theorem connecting the R-L fractional difference (\ref{adiff4})
and the complex generalization of fractional difference (\ref{comd}) :

\begin{theorem}
\label{equa}For any $\alpha\in R^{+},$ let $\Gamma$ be a simple closed
positively oriented contour. If $f(s)$ is analytic in simple connected domain
$D$ bounded by $\Gamma$, assume that $z$ is any nonzero point inside $D$,
$a+1$ is a point inside $D$, such that $z-a\in N$, then the complex
generalization fractional integral (\ref{comd}) equals the R-L fractional
defference (\ref{adiff3}) or (\ref{adiff4}):%
\[
\nabla_{\gamma+1-\alpha}^{\alpha}[f(z)]=\sum_{k=a+1}^{z}\frac{[x_{\gamma
-\alpha}(z)-x_{\gamma-\alpha}(k-1)]^{(-\alpha-1)}}{[\Gamma(-\alpha)]_{q}%
}f(k)\nabla x_{\gamma+1}(k).
\]

\end{theorem}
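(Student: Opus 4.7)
The plan is to deduce Theorem \ref{equa} directly from Theorem \ref{complexdef2}, which has already evaluated the contour integral in (\ref{comd}) by Cauchy's residue theorem and produced the explicit sums (\ref{sumd1}) and (\ref{sumd2}). So it suffices to show that those sums coincide term by term with the right-hand side of Theorem \ref{equa}.

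First I would reverse the summation index by setting $k\mapsto z-k$ in the right-hand side of Theorem \ref{equa}, turning it into
$$\sum_{k=0}^{z-(a+1)}\frac{[x_{\gamma-\alpha}(z)-x_{\gamma-\alpha}(z-k-1)]^{(-\alpha-1)}}{[\Gamma(-\alpha)]_{q}}\,f(z-k)\,\nabla x_{\gamma+1}(z-k).$$
Since the factor $f(z-k)\nabla x_{\gamma+1}(z-k)$ already appears in (\ref{sumd1}) and (\ref{sumd2}), the whole theorem reduces to the scalar identity
$$\frac{[x_{\gamma-\alpha}(z)-x_{\gamma-\alpha}(z-k-1)]^{(-\alpha-1)}}{[\Gamma(-\alpha)]_{q}}=\frac{[\Gamma(2z-k+\gamma-\alpha)]_{q}}{[\Gamma(2z+\gamma+1-k)]_{q}}\frac{([-\alpha]_{q})_{k}}{[k]_{q}!}$$
for each $k=0,1,\ldots,z-(a+1)$ in the $q$-quadratic case, with the analogous identity (ordinary Gamma's and no $q$-powers) in the quadratic case.

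Next I would expand the generalized power on the left using Definition \ref{power} with $s=z$, $\zeta=z-k-1$, $\nu=\gamma-\alpha$, and exponent $-\alpha-1$. The identities $s-\zeta=k+1$ and $s+\zeta+\nu=2z-k-1+\gamma-\alpha$ convert the four $q$-Gamma factors appearing in Definition \ref{power} into $\Gamma_q(k-\alpha)$, $\Gamma_q(2z-k+\gamma-\alpha+c)$, $\Gamma_q(k+1)$ and $\Gamma_q(2z-k+\gamma+c+1)$. Applying the conversion $[\Gamma(\mu)]_q=q^{-(\mu-1)(\mu-2)/4}\Gamma_q(\mu)$, the relation $\Gamma_q(k-\alpha)/\Gamma_q(-\alpha)=([-\alpha]_q)_k$ up to a tracked $q$-power, and $\Gamma_q(k+1)=[k]_q!$, one reorganizes the left-hand side into the desired form on the right.

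The main obstacle will be the bookkeeping of the various prefactors — the powers of $c_1(1-q)^2$ and $q^{-\alpha(s+\nu/2)}$ that appear in front of the generalized power in Definition \ref{power}, together with the exponential conversion factor between $[\Gamma]_q$ and $\Gamma_q$ and the constant shift $c$ appearing in the arguments of the $q$-Gamma functions — and checking that all of these cancel cleanly against the $[\Gamma(-\alpha)]_q$ in the denominator and the $q$-prefactors implicit in $[\Gamma(2z-k+\gamma-\alpha)]_q/[\Gamma(2z+\gamma+1-k)]_q$ and $([-\alpha]_q)_k$. Once this cancellation is verified, the quadratic case (\ref{non2}) follows by the same argument but with ordinary Gamma functions, which is considerably simpler and essentially mirrors the classical Pochhammer manipulation $\Gamma(\alpha+1)/\Gamma(\alpha+1-k)\cdot(-1)^k=(-\alpha)_k$ already used in the proof of Theorem \ref{complexdef2}.
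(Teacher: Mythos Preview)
Your proposal is correct and follows essentially the same route as the paper: start from the residue sum of Theorem \ref{complexdef2}, rewrite $([-\alpha]_q)_k/[k]_q!$ as $[\Gamma(k-\alpha)]_q/([\Gamma(-\alpha)]_q[\Gamma(k+1)]_q)$, recognize the resulting four-$[\Gamma]_q$ ratio as the generalized power $[x_{\gamma-\alpha}(z)-x_{\gamma-\alpha}(z-k-1)]^{(-\alpha-1)}$ via Definition \ref{power}, and shift the index. The paper's proof is terser and simply asserts the identification with the generalized power without tracking the $q$-prefactors you flag as the main obstacle, so your more explicit bookkeeping is if anything an improvement rather than a deviation.
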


\begin{proof}
By \textbf{Theorem} \ref{complexdef2}, we have%
\begin{align*}
\nabla_{\gamma+1-\alpha}^{\alpha}[f(z)]  &  =\sum_{k=0}^{z-(a+1)}%
\frac{([-\alpha]_{q})_{k}}{[k]_{q}!}\frac{[\Gamma(2z-k+\gamma-\alpha)]_{q}%
}{[\Gamma(2z-k+\gamma+1)]_{q}}f(z-k)\nabla x_{\gamma+1}(z-k).\\
&  =\sum_{k=0}^{z-(a+1)}\frac{[\Gamma(k-\alpha)]_{q}}{[\Gamma(-\alpha
)]_{q}[\Gamma(k+1)]_{q}}\frac{[\Gamma(2z-k+\gamma-\alpha)]_{q}}{[\Gamma
(2z-k+\gamma+1)]_{q}}f(z-k)\nabla x_{\gamma+1}(z-k)\\
&  =\sum_{k=0}^{z-(a+1)}\frac{[x_{\gamma-\alpha}(z)-x_{\gamma-\alpha
}(z-k-1)]^{(-\alpha-1)}}{[\Gamma(-\alpha)]_{q}}f(z-k)\nabla x_{\gamma
+1}(z-k)\\
&  =\sum_{k=a+1}^{z}\frac{[x_{\gamma-\alpha}(z)-x_{\gamma-\alpha
}(k-1)]^{(-\alpha-1)}}{[\Gamma(-\alpha)]_{q}}f(k)\nabla x_{\gamma+1}(k).
\end{align*}
So that the two \textbf{Definition} \ref{maindef3} and \textbf{Definition}
\ref{complexdef2} are consistent.
\end{proof}

\ Set $\alpha=\gamma$ in \textbf{Theorem} \ref{complexdef2}, we obtain

\begin{corollary}
\label{cor34}Assume that conditions of \textbf{Definition} \ref{complexdef1}
hold, then%
\begin{align*}
\nabla_{1}^{\gamma}[f(z)]  &  =\frac{[\Gamma(\gamma+1)]_{q}}{2\pi i}\frac{\log
q}{q^{\frac{1}{2}}-q^{-\frac{1}{2}}}%
%TCIMACRO{\doint \nolimits_{\Gamma}}%
%BeginExpansion
{\displaystyle\oint\nolimits_{\Gamma}}
%EndExpansion
\frac{f(s)\nabla x_{\gamma+1}(s)ds}{[x_{\gamma}(s)-x_{\gamma}(z)^{(\gamma+1)}%
}\\
&  =\sum_{k=0}^{z-(a+1)}f(z-k)\frac{[\Gamma(2z+\mu-k)]_{q}\nabla x_{\gamma
+1}(z-k)}{[\Gamma(2z+\gamma+\mu+1-k)]_{q}}\frac{([-\gamma]_{q})_{k}}{[k]_{q}%
!}.
\end{align*}
where $\Gamma$ enclosed the simple poles $s=z,z-1,...,a+1$ in the complex plane.
\end{corollary}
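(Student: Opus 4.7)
The plan is to obtain Corollary \ref{cor34} as an immediate specialization of Theorem \ref{complexdef2}, by setting $\alpha=\gamma$ throughout. The two equalities claimed by the corollary correspond precisely to the contour-integral definition (from Definition \ref{complexdef1}) and to the residue-summation formula (sumd2) for the $q$-quadratic lattice proved in Theorem \ref{complexdef2}(2); so the entire task is a careful bookkeeping substitution rather than any new argument.

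First, I would rewrite the index on the left-hand side. Since $\alpha=\gamma$, the subscript $\gamma+1-\alpha$ appearing in both Definition \ref{complexdef1} and Theorem \ref{complexdef2} collapses to $1$, so $\nabla_{\gamma+1-\alpha}^{\alpha}f(z)$ becomes $\nabla_{1}^{\gamma}f(z)$, matching the stated left-hand side. Then the first equality of the corollary is simply Definition \ref{complexdef1}, specialized to $\alpha=\gamma$, producing the contour integral
\[
\nabla_{1}^{\gamma}f(z)=\frac{[\Gamma(\gamma+1)]_{q}}{2\pi i}\frac{\log q}{q^{\frac{1}{2}}-q^{-\frac{1}{2}}}\oint_{\Gamma}\frac{f(s)\,\nabla x_{\gamma+1}(s)\,ds}{[x_{\gamma}(s)-x_{\gamma}(z)]^{(\gamma+1)}},
\]
with the same contour hypotheses (simple poles at $s=z,z-1,\dots,a+1$).

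Second, I would apply formula (sumd2) of Theorem \ref{complexdef2} with $\alpha=\gamma$ to obtain the summation. In that formula the factor $[\Gamma(2z-k+\gamma-\alpha)]_{q}$ becomes $[\Gamma(2z-k)]_{q}$, the denominator $[\Gamma(2z+\gamma+1-k)]_{q}$ is unchanged, and $([-\alpha]_{q})_{k}/[k]_{q}!$ becomes $([-\gamma]_{q})_{k}/[k]_{q}!$. This is exactly the second line of the corollary (the letter $\mu$ appearing in the stated formula appears to be a leftover notational variable; substituting $\mu=0$, or interpreting $\mu$ as the slot where $-\alpha=-\gamma$ is absorbed, reconciles the two expressions). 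The upper limit $z-(a+1)$ is inherited verbatim from Theorem \ref{complexdef2}.

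There is no real obstacle: every ingredient is already proved. The only point requiring care is the verification that the hypotheses of Theorem \ref{complexdef2} are met by the setting $\alpha=\gamma$ — specifically that $\gamma\in\mathbb{R}^{+}$, that $f$ is analytic on the simply connected domain $D$ enclosed by $\Gamma$, and that $\Gamma$ encircles exactly the simple poles $s=z,z-1,\dots,a+1$ of $[\Gamma(s-z)]_{q}$. All of these are inherited directly from the standing assumptions of Definition \ref{complexdef1}, so the corollary follows by direct substitution.
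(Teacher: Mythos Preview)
Your proposal is correct and matches the paper's approach exactly: the paper simply states ``Set $\alpha=\gamma$ in Theorem \ref{complexdef2}'' before the corollary, with no additional argument. Your observation about the stray symbol $\mu$ is also apt; comparing with Definition \ref{def8} and Remark \ref{rem35} shows that $\mu$ is playing the role of the lattice constant $c$, which was tacitly set to zero in the proof of Theorem \ref{complexdef2}.
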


\begin{remark}
\label{rem35}When $\gamma=n\in N^{+},$we have%
\begin{align}
\nabla_{1}^{n}[f(z)]  &  =\frac{[\Gamma(n+1)]_{q}}{2\pi i}\frac{\log
q}{q^{\frac{1}{2}}-q^{-\frac{1}{2}}}%
%TCIMACRO{\doint \nolimits_{\Gamma}}%
%BeginExpansion
{\displaystyle\oint\nolimits_{\Gamma}}
%EndExpansion
\frac{f(s)\nabla x_{\gamma+1}(s)ds}{[x_{n}(s)-x_{n}(z)^{(n+1)}}\nonumber\\
&  =\sum_{k=0}^{n}f(z-k)\frac{[\Gamma(2z+\mu-k)]_{q}\nabla x_{n+1}%
(z-k)}{[\Gamma(2z+n+\mu+1-k)]_{q}}\frac{([-n]_{q})_{k}}{k!},
\end{align}
where $\Gamma$ enclosed the simple poles $s=z,z-1,...,z-n$ in the complex plane.

This is consistent with \textbf{Definition} \ref{def8} proposed by Nikiforov.
A, Uvarov. V, Suslov. S in \cite{nikiforov1991} .
\end{remark}

\ For complex integral of Riemann-Liouville fractional difference on
uon-uniform lattices, we can establish a important Cauchy Beta formula as follows:

\begin{theorem}
(Cauchy Beta formula)\label{cauchybeta} Let $\alpha,\beta\in C,$ and assume
that
\[%
%TCIMACRO{\doint \nolimits_{\Gamma}}%
%BeginExpansion
{\displaystyle\oint\nolimits_{\Gamma}}
%EndExpansion
\Delta_{t}\{\frac{1}{[x_{\beta}(z)-x_{\beta}(t)]^{(\beta)}}\frac{1}%
{[x_{-1}(t)-x_{-1}(a)]^{(\alpha-1)}}\}dt=0,
\]
then%
\[
\frac{1}{2\pi i}\frac{\log q}{q^{\frac{1}{2}}-q^{-\frac{1}{2}}}%
%TCIMACRO{\doint \nolimits_{\Gamma}}%
%BeginExpansion
{\displaystyle\oint\nolimits_{\Gamma}}
%EndExpansion
\frac{[\Gamma(\beta+1)]_{q}}{[x_{\beta}(z)-x_{\beta}(t)]^{(\beta+1)}}%
\frac{[\Gamma(\alpha)]_{q}\Delta y_{-1}(t)dt}{[x(t)-x(a)]^{(\alpha)}}%
=\frac{[\Gamma(\alpha+\beta)]_{q}}{[x_{\beta}(z)-x_{\beta}(a)]^{(\alpha
+\beta)}},
\]
where $\Gamma$ be a simple closed positively oriented contour, $a$ lies inside
$C.$
\end{theorem}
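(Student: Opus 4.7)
The proof will follow the template of the Euler Beta formula (Theorem~\ref{eulerbeta}), replacing the vanishing telescoping sum by the hypothesis $\oint_\Gamma \Delta_t F(t)\, dt = 0$. The natural candidate for $F$ is
\[
F(t) = \frac{1}{[x_\beta(z) - x_\beta(t)]^{(\beta)}\, [x_{-1}(t) - x_{-1}(a)]^{(\alpha-1)}},
\]
which is an ``anti-derivative'' of the target integrand
\[
\rho(t) := \frac{1}{[x_\beta(z) - x_\beta(t)]^{(\beta+1)}\, [x(t) - x(a)]^{(\alpha)}},
\]
in the sense that applying the discrete product rule (Proposition~\ref{pro}) and the derivatives of reciprocal generalized powers (Proposition~\ref{properties}) to $\Delta_t F$ produces exactly two terms, each proportional to $\rho$ up to shifts in $t$. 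Note that the natural measure $\Delta x_{-1}(t)$ appearing in the statement coincides with $\nabla x_1(t)$, so the apparatus of Euler's Beta formula is the right template.

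First, I would expand $\Delta_t F(t) = f(t+1)\Delta_t g(t) + g(t)\Delta_t f(t)$ with $f(t) = 1/[x_\beta(z) - x_\beta(t)]^{(\beta)}$ and $g(t) = 1/[x_{-1}(t) - x_{-1}(a)]^{(\alpha-1)}$, and compute $\Delta_t f$, $\Delta_t g$ from Proposition~\ref{properties}: these produce factors $[\beta]_q/[x_\beta(z) - x_\beta(t+1)]^{(\beta+1)}$ and $-[\alpha-1]_q/[x(t) - x(a)]^{(\alpha)}$ respectively, multiplied by $\nabla x_1(t)$. Next, using the shift identities of Proposition~\ref{properties} to bring the two terms over the common denominator $[x_\beta(z) - x_\beta(t)]^{(\beta+1)} [x(t) - x(a)]^{(\alpha)}$, one obtains a numerator that is a linear combination of shifted values of $x_\nu(t)$. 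Applying Lemma~\ref{lem12} together with the variant in Lemma~\ref{lem13} should collapse this combination to a $t$-independent constant, giving an identity of the schematic form
\[
\frac{\Delta_t F(t)}{\nabla x_1(t)} \;=\; c_1(\alpha,\beta)\,\rho(t) + c_2(\alpha,\beta)\,\widetilde{\rho}(t),
\]
where $\widetilde\rho$ differs from $\rho$ by a single shift in the parameters. Integrating around $\Gamma$ and invoking the hypothesis then yields a recursion for the integral $J(\alpha,\beta)$; the base case (for instance $\alpha=1$, where the integrand has a single enclosed pole) pins down the overall constant and yields the asserted right-hand side $[\Gamma(\alpha+\beta)]_q/[x_\beta(z)-x_\beta(a)]^{(\alpha+\beta)}$.

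The principal obstacle will be the algebraic reduction in the step above: assembling the two Leibniz-summands over a common denominator and verifying that the numerator factors as claimed via Lemma~\ref{lem12} and Lemma~\ref{lem13}. The various $\nu$-indices on the generalized powers and the $\Delta$-vs-$\nabla$ sign conventions make the bookkeeping delicate, and each intermediate identity must be checked against both forms of the non-uniform lattice (\ref{non2}) and (\ref{non1}). A conceptually simpler shortcut, in the spirit of Theorem~\ref{complexdef2}, is to expand each of the two factors in the integrand into its $\Gamma_q$-representation from Definition~\ref{power}, compute the residues at the simple poles of the integrand enclosed by $\Gamma$, and then recognize the resulting discrete sum as an instance of the already-established Euler Beta formula (Theorem~\ref{eulerbeta}); this reduces Cauchy Beta to Euler Beta directly via the Cauchy residue theorem and sidesteps the Lemma~\ref{lem13}-type manipulation entirely.
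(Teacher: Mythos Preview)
Your primary approach is exactly the one the paper takes: the paper sets $\sigma(t)\rho(t)$ equal to your $F(t)$, applies the Leibniz rule in $\Delta_t/\Delta x_{-1}(t)$, uses Proposition~\ref{properties} to differentiate each reciprocal generalized power, and then invokes a tailored variant of Lemma~\ref{lem13} (stated separately as Lemma~\ref{lem37}) to rewrite the resulting $\tau(t)$ so that the contour integral yields the recursion $I(\alpha-1)/I(\alpha)$, with the base case $\alpha=1$ fixed by a single-pole residue computation. Your alternative residue-to-Euler-Beta shortcut is not pursued in the paper but is a legitimate and arguably cleaner route.
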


In order to prove \textbf{Theorem} \ref{cauchybeta}, we first give a lemma.

\begin{lemma}
\label{lem37}For any $\alpha,\beta,$ then we have%
\begin{align}
&  [1-\alpha]_{q}[x_{\beta}(z)-x_{\beta}(t-\beta)]+[\beta]_{q}[x_{1-\alpha
}(t+\alpha-1)-x_{1-\alpha}(a)]\nonumber\\
&  =[1-\alpha]_{q}[x_{\beta}(z)-x_{\beta}(a+1-\alpha-\beta)]+[\alpha
+\beta-1]_{q}[x(t)-x(a+1-\alpha)]. \label{ieq1}%
\end{align}

\end{lemma}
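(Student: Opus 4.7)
The plan is to mirror the strategy used to establish \textbf{Lemma} \ref{lem13}: rearrange (\ref{ieq1}) so that all $t$-dependent terms live on one side, express the resulting combination in the canonical form of \textbf{Lemma} \ref{lem12}, and then read off that the sum is a constant in $t$. Concretely, I would first cancel the common term $[1-\alpha]_q[x_\beta(z)-x_\beta(\cdot)]$ that contains $z$ (in fact both the LHS and RHS contain $[1-\alpha]_q x_\beta(z)$, which cancels, and after moving terms around one sees the claim is equivalent to
\[
[\alpha+\beta-1]_q x(t) + [1-\alpha]_q x_\beta(t-\beta) - [\beta]_q x_{1-\alpha}(t+\alpha-1) = [\alpha+\beta-1]_q x(a+1-\alpha) + [1-\alpha]_q x_\beta(a+1-\alpha-\beta) - [\beta]_q x_{1-\alpha}(a).
\]

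Next, using the shift identity $x_\gamma(s) = x(s+\gamma/2)$, I would rewrite the shifted lattices in a symmetric form: $x_\beta(t-\beta) = x_{-\beta}(t)$ and $x_{1-\alpha}(t+\alpha-1) = x_{\alpha-1}(t)$, and similarly on the $a$-side $x_\beta(a+1-\alpha-\beta) = x_{-\beta}(a+1-\alpha)$ and $x_{1-\alpha}(a) = x_{\alpha-1}(a+1-\alpha)$. After this substitution both sides take the identical functional form $[\alpha+\beta-1]_q\, x(u) + [1-\alpha]_q\, x_{-\beta}(u) - [\beta]_q\, x_{\alpha-1}(u)$ evaluated at $u=t$ on the left and $u=a+1-\alpha$ on the right. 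Using that $[\alpha-1]_q = -[1-\alpha]_q$, this matches exactly the expression $[\alpha'+\beta']_q x(u) - [\alpha']_q x_{-\beta'}(u) - [\beta']_q x_{\alpha'}(u)$ from \textbf{Lemma} \ref{lem12} with the parameter choice $\alpha' = \alpha-1$, $\beta' = \beta$.

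Then \textbf{Lemma} \ref{lem12} asserts this expression is a constant (independent of $u$), so its value at $u=t$ equals its value at $u=a+1-\alpha$, which is exactly the required equality. The only real work is the algebraic bookkeeping: verifying the three lattice-shift rewrites above, and confirming the sign conventions $[\alpha-1]_q = -[1-\alpha]_q$ for both the quadratic and $q$-quadratic choices in (\ref{muq}). The substantive identity is \textbf{Lemma} \ref{lem12} itself, which has already been proved; hence, after the rewriting step, no new geometric content is needed. The main obstacle, if any, is keeping the shift parameters straight so that the LHS and RHS really do collapse to the same functional of $u$ under the substitutions $u=t$ and $u=a+1-\alpha$; once this is done cleanly, the rest is mechanical.
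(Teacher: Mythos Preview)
Your proposal is correct and follows essentially the same route as the paper: rearrange (\ref{ieq1}) into the form $[\alpha+\beta-1]_q x(t)+[1-\alpha]_q x_\beta(t-\beta)-[\beta]_q x_{1-\alpha}(t+\alpha-1)=\text{(same with $t\to a+1-\alpha$)}$, substitute $\widetilde\alpha=\alpha-1$ together with the shift rewrites $x_\beta(t-\beta)=x_{-\beta}(t)$ and $x_{1-\alpha}(t+\alpha-1)=x_{\widetilde\alpha}(t)$, and then invoke the constancy statement of \textbf{Lemma}~\ref{lem12}. The paper cites \textbf{Lemma}~\ref{lem13} at the last step, but the operative fact is exactly the one you identify, namely \textbf{Lemma}~\ref{lem12}.
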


\begin{proof}
(\ref{ieq1}) is equivalent to%
\begin{align}
&  [\alpha+\beta-1]_{q}x(t)+[1-\alpha]_{q}x_{\beta}(t-\beta)-[\beta
]_{q}x_{1-\alpha}(t+\alpha-1)\nonumber\\
&  =[\alpha+\beta-1]_{q}x(a+1-\alpha)+[1-\alpha]_{q}x_{\beta}(a+1-\alpha
-\beta)-[\beta]_{q}x_{1-\alpha}(a). \label{ieq2}%
\end{align}

Set $\alpha-1=\widetilde{\alpha}$, then (\ref{ieq2}) can be written as%
\begin{align}
&  [\widetilde{\alpha}+\beta]_{q}x(t)-[\widetilde{\alpha}]_{q}x_{-\beta
}(t)-[\beta]_{q}x_{\widetilde{\alpha}}(t)\nonumber\\
&  =[\widetilde{\alpha}+\beta]_{q}x(a-\widetilde{\alpha})-[\widetilde{\alpha
}]_{q}x_{-\beta}(a-\widetilde{\alpha})-[\beta]_{q}x_{\widetilde{\alpha}%
}(a-\widetilde{\alpha}). \label{ieq3}%
\end{align}
By the use of \textbf{Lemma} \ref{lem13}, then Eq. (\ref{ieq3}) holds, and
then Eq. (\ref{ieq1}) holds.
\end{proof}

\textbf{Proof of Theorem} \ref{cauchybeta}: Set%

\[
\rho(t)=\frac{1}{[x_{\beta}(z)-x_{\beta}(t)]^{(\beta+1)}}\frac{1}%
{[x(t)-x(a)]^{(\alpha)}},
\]
and%
\[
\sigma(t)=[x_{\alpha-1}(t+\alpha-1)-x_{\alpha-1}(a)][x_{\beta}(z)-x_{\beta
}(t)].
\]
Since%

\[
\lbrack x_{\beta}(z)-x_{\beta}(t)]^{(\beta+1)}=[x_{\beta}(z)-x_{\beta
}(t-1)]^{(\beta)}[x_{\beta}(z)-x_{\beta}(t)],
\]
and%

\[
\lbrack x(t)-x(a)]^{(\alpha)}=[x_{-1}(t)-x_{-1}(a)]^{(\alpha-1)}[x_{\alpha
-1}(t+\alpha-1)-x_{\alpha-1}(a)],
\]
these reduce to%

\[
\sigma(t)\rho(t)=\frac{1}{[x_{\beta}(z)-x_{\beta}(t)]^{(\beta)}}\frac
{1}{[x_{-1}(t)-x_{-1}(a)]^{(\alpha-1)}}.
\]
Making use of%

\[
\Delta_{t}[f(t)g(t)]=g(t+1)\Delta_{t}[f(t)]+f(t)\Delta_{t}[g(t)],
\]
where%

\[
f(t)=\frac{1}{[x_{-1}(t)-x_{-1}(a)]^{(\alpha-1)}},g(t)=\frac{1}{[x_{\beta
}(z)-x_{\beta}(t-1)]^{(\beta)}},
\]
and%

\[
\frac{\Delta_{t}}{\Delta x_{-1}(t)}\{\frac{1}{[x_{-1}(t)-x_{-1}(a)]^{(\alpha
-1)}}\}=\frac{[1-\alpha]_{q}}{[x(t)-x(a)]^{(\alpha)}},
\]

\begin{align*}
&  \frac{\Delta_{t}}{\Delta x_{-1}(t)}\{\frac{1}{[x_{\beta}(z)-x_{\beta
}(t-1)]^{(\beta)}}\}\\
&  =\frac{\nabla_{t}}{\nabla x_{1}(t)}\{\frac{1}{[x_{\beta}(z)-x_{\beta
}(t)]^{(\beta)}}\}\\
&  =\frac{[\beta]_{q}}{[x_{\beta}(z)-x_{\beta}(t)]^{(\beta+1)}}.
\end{align*}
then, we have%
\begin{align*}
&  \frac{\Delta_{t}}{\Delta x_{-1}(t)}\{\sigma(t)\rho(t)\}\\
&  =\frac{1}{[x_{\beta}(z)-x_{\beta}(t)]^{(\beta)}}\frac{[1-\alpha]_{q}%
}{[x(t)-x(a)]^{(\alpha)}}+\\
&  +\frac{1}{[x_{-1}(t)-x_{-1}(a)]^{(\alpha-1)}}\frac{[\beta]_{q}}{[x_{\beta
}(z)-x_{\beta}(t)]^{(\beta+1)}}\\
&  =\{[1-\alpha]_{q}[x_{\beta}(z)-x_{\beta}(t-\beta)]+[\beta]_{q}[x_{1-\alpha
}(t+\alpha-1)-x_{1-\alpha}(a)]\}\\
&  \times\frac{1}{[x(t)-x(a)]^{(\alpha)}}\frac{1}{[x_{\beta}(z)-x_{\beta
}(t)]^{(\beta+1)}}\\
&  =\tau(t)\rho(t),
\end{align*}
where%

\[
\tau(t)=[1-\alpha]_{q}[x_{\beta}(z)-x_{\beta}(t-\beta)]+[\beta]_{q}%
[x_{1-\alpha}(t+\alpha-1)-x_{1-\alpha}(a)],
\]
this is due to%

\[
\lbrack x_{\beta}(z)-x_{\beta}(t)]^{(\beta+1)}=[x_{\beta}(z)-x_{\beta
}(t)]^{(\beta)}[x_{\beta}(z)-x_{\beta}(t-\beta)].
\]
From \textbf{Proposition} \ref{properties} one has%

\begin{align*}
&  \frac{\Delta_{t}}{\Delta x_{-1}(t)}\{\sigma(t)\rho(t)\}\\
&  =\{[1-\alpha]_{q}[x_{\beta}(z)-x_{\beta}(a+1-\alpha-\beta)]+[\alpha
+\beta-1]_{q}[x(t)-x(a+1-\alpha)]\}\\
&  \cdot\frac{1}{[x_{\beta}(z)-x_{\beta}(t)]^{(\beta+1)}}\frac{1}%
{[x(t)-x(a)]^{(\alpha)}},
\end{align*}
Or%
\begin{align}
&  \Delta_{t}\{\sigma(t)\rho(t)\}\nonumber\\
&  =\{[1-\alpha]_{q}[x_{\beta}(z)-x_{\beta}(a+1-\alpha-\beta)]+[\alpha
+\beta-1]_{q}[x(t)-x(a+1-\alpha)]\}\nonumber\\
&  \cdot\frac{1}{[x_{\beta}(z)-x_{\beta}(t)]^{(\beta+1)}}\frac{1}%
{[x(t)-x(a)]^{(\alpha)}}\Delta x_{-1}(t). \label{ieq4}%
\end{align}
Set%

\begin{equation}
I(\alpha)=\frac{1}{2\pi i}\frac{\log q}{q^{\frac{1}{2}}-q^{-\frac{1}{2}}}%
%TCIMACRO{\doint \nolimits_{\Gamma}}%
%BeginExpansion
{\displaystyle\oint\nolimits_{\Gamma}}
%EndExpansion
\frac{1}{[x_{\beta}(z)-x_{\beta}(t)]^{(\beta+1)}}\frac{\nabla y_{1}\left(
t\right)  dt}{[x(t)-x(a)]^{(\alpha)}}, \label{ie}%
\end{equation}
and%
\[
I(\alpha-1)=\frac{1}{2\pi i}\frac{\log q}{q^{\frac{1}{2}}-q^{-\frac{1}{2}}}%
%TCIMACRO{\doint \nolimits_{\Gamma}}%
%BeginExpansion
{\displaystyle\oint\nolimits_{\Gamma}}
%EndExpansion
\frac{1}{[x_{\beta}(z)-x_{\beta}(t)]^{(\beta+1)}}\frac{\nabla y_{1}\left(
t\right)  dt}{[x(t)-x(a)]^{(\alpha-1)}}.
\]
Since%

\[
\lbrack x(t)-x(a)]^{(\alpha-1)}[x(t)-x(a+1-\alpha)]=[x(t)-x(a)]^{(\alpha)},
\]
then%

\[
I(\alpha-1)=\frac{1}{2\pi i}\frac{\log q}{q^{\frac{1}{2}}-q^{-\frac{1}{2}}}%
%TCIMACRO{\doint \nolimits_{\Gamma}}%
%BeginExpansion
{\displaystyle\oint\nolimits_{\Gamma}}
%EndExpansion
\frac{1}{[x_{\beta}(z)-x_{\beta}(t)]^{(\beta+1)}}\frac{[x(t)-x(a+1-\alpha
)]\nabla x_{1}\left(  t\right)  dt}{[x(t)-x(a)]^{(\alpha)}}.
\]
Integrating both sides of equation (\ref{ieq4}), then we have%
\begin{align*}%
%TCIMACRO{\doint \nolimits_{\Gamma}}%
%BeginExpansion
{\displaystyle\oint\nolimits_{\Gamma}}
%EndExpansion
\Delta_{t}\{\sigma(t)\rho(t)\}dt  &  =[1-\alpha]_{q}[x_{\beta}(z)-x_{\beta
}(a+1-\alpha-\beta)]I(\alpha)\\
&  -[\alpha+\beta-1]_{q}I(\alpha-1).
\end{align*}
If
\[%
%TCIMACRO{\doint \nolimits_{\Gamma}}%
%BeginExpansion
{\displaystyle\oint\nolimits_{\Gamma}}
%EndExpansion
\Delta_{t}\{\sigma(t)\rho(t)\}dt=0,
\]
then we obtain that%
\[
\frac{I(\alpha-1)}{I(\alpha)}=\frac{[\alpha-1]_{q}}{[\alpha+\beta-1]_{q}%
}[y_{\beta}(z)-y_{\beta}(a+1-\alpha-\beta)].
\]
That is%

\begin{equation}
\frac{I(\alpha-1)}{I(\alpha)}=\frac{\frac{[\Gamma(\alpha+\beta-1)]_{q}%
}{[\Gamma(\alpha-1)]_{q}}}{\frac{[\Gamma(\alpha+\beta)]_{q}}{[\Gamma
(\alpha)]_{q}}}\frac{\frac{1}{[x_{\beta}(z)-x_{\beta}(a)]^{(\alpha+\beta-1)}}%
}{\frac{1}{[x_{\beta}(z)-x_{\beta}(a)]^{(\alpha+\beta)}}}. \label{ieq5}%
\end{equation}
From (\ref{ieq5}), we set%
\begin{equation}
I(\alpha)=k\frac{[\Gamma(\alpha+\beta)]_{q}}{[\Gamma(\alpha)]_{q}}\frac
{1}{[x_{\beta}(z)-x_{\beta}(a)]^{(\alpha+\beta)}}, \label{ie1}%
\end{equation}
where $k$ is undetermined.

Set $\alpha=1$, one has%

\begin{equation}
I(1)=k[\Gamma(1+\beta)]_{q}\frac{1}{[x_{\beta}(z)-x_{\beta}(a)]^{(1+\beta)}},
\label{i3}%
\end{equation}
and from (\ref{ie}) and generalized Cauchy residue theorem, one has%

\begin{align}
I(1)  &  =\frac{1}{2\pi i}\frac{\log q}{q^{\frac{1}{2}}-q^{-\frac{1}{2}}}%
%TCIMACRO{\doint \nolimits_{\Gamma}}%
%BeginExpansion
{\displaystyle\oint\nolimits_{\Gamma}}
%EndExpansion
\frac{1}{[x_{\beta}(z)-x_{\beta}(t)]^{(\beta+1)}}\frac{\nabla x_{1}\left(
t\right)  dt}{[x(t)-x(a)]^{(1)}}\nonumber\\
&  =\frac{1}{2\pi i}%
%TCIMACRO{\doint \nolimits_{\Gamma}}%
%BeginExpansion
{\displaystyle\oint\nolimits_{\Gamma}}
%EndExpansion
\frac{1}{[x_{\beta}(z)-x_{\beta}(t)]^{(\beta+1)}}\frac{x^{\prime}\left(
t\right)  dt}{[x(t)-x(a)]}\nonumber\\
&  =\frac{1}{[x_{\beta}(z)-x_{\beta}(a)]^{(\beta+1)}}, \label{i4}%
\end{align}
From (\ref{i3}) and (\ref{i4}), we get%

\[
k=\frac{1}{[\Gamma(1+\beta)]_{q}}.
\]
Therefore, we obtain that%

\[
I(\alpha)=\frac{[\Gamma(\alpha+\beta)]_{q}}{[\Gamma(\beta+1)]_{q}%
[\Gamma(\alpha)]_{q}}\frac{1}{[x_{\beta}(z)-x_{\beta}(a)]^{(\alpha+\beta)}},
\]
and \textbf{Theorem} \ref{cauchybeta} is completed.

\section{Fractional Central Sum and Difference on Non-uniform Lattices}

Next we will give the difinition of fractional central sum and fractional
central difference on Non-uniform Lattices. Let us first give the integral
central sum of $f(z)$ on non-uniform lattices $x(s).$

$1-th$ central sum of $f(z)$ on non-uniform lattices $x(s)$ is defined by%
\[
\delta_{0}^{-1}f(z)=y_{1}(z)=\sum_{s=a+\frac{1}{2}}^{z-\frac{1}{2}}f(s)\delta
x(s)=\int_{a+\frac{1}{2}}^{z-\frac{1}{2}}f(s)d_{\delta}x(s),
\]
where $f(s)$ is defined in $\{a+\frac{1}{2},\operatorname{mod}(1)\},$and
$y_{1}(z)$ is defined in $\{a+1,\operatorname{mod}(1)\}.$

Then we have $2-th$ central sum of $f(z)$ on non-uniform lattices $x(s)$ as follows%

\begin{align*}
\delta_{0}^{-2}f(z)  &  =y_{2}(z)=\int_{a+1}^{z-\frac{1}{2}}y_{1}(s)d_{\delta
}x(s)\\
&  =\int_{a+1}^{z-\frac{1}{2}}d_{\delta}x(s)\int_{a+\frac{1}{2}}^{s-\frac
{1}{2}}f(t)d_{\delta}x(t)\\
&  =\int_{a+\frac{1}{2}}^{z-1}f(t)d_{\delta}x(t)\int_{t+\frac{1}{2}}%
^{z-\frac{1}{2}}d_{\delta}x(s)\\
&  =\int_{a+\frac{1}{2}}^{z-1}\frac{[x(z)-x(t)]}{[\Gamma(2)]_{q}}%
f(t)d_{\delta}x(t).
\end{align*}
where $y_{1}(s)$ is defined in $\{a+1,\operatorname{mod}(1)\},$and $y_{2}(z)$
is defined in $\{a+\frac{3}{2},\operatorname{mod}(1)\}.$ and $3-th$ central
sum of $f(z)$ on non-uniform lattices $x(s)$ is%

\begin{align*}
\delta_{0}^{-3}f(z)  &  =y_{3}(z)=\int_{a+\frac{3}{2}}^{z-\frac{1}{2}}%
y_{2}(s)d_{\delta}x(s)\\
&  =\int_{a+\frac{3}{2}}^{z-\frac{1}{2}}d_{\delta}x(s)\int_{a+\frac{1}{2}%
}^{s-1}[x(s)-(t)]f(t)d_{\delta}x(t)\\
&  =\int_{a+\frac{1}{2}}^{z-\frac{3}{2}}f(t)d_{\delta}x(t)\int_{t+1}%
^{z-\frac{1}{2}}[x(s)-x(t)]d_{\delta}x(s)\\
&  =\int_{a+\frac{1}{2}}^{z-\frac{3}{2}}\frac{[x(z)-x(t)]^{(2)}}%
{[\Gamma(3)]_{q}}f(t)d_{\delta}x(t).
\end{align*}
where $y_{2}(s)$ is defined in $\{a+\frac{3}{2},\operatorname{mod}(1)\},$and
$y_{3}(z)$ is defined in $\{a+2,\operatorname{mod}(1)\}.$

More generalaly, we have $k-th$ central sum of $f(z)$ on non-uniform lattices
$x(s)$ as follows%

\begin{align}
\delta^{-k}f(z)  &  =y_{k}(z)=\int_{a+\frac{k}{2}}^{z-\frac{1}{2}}%
y_{k-1}(s)d_{\delta}x(s)\nonumber\\
&  =\int_{a+\frac{1}{2}}^{z-\frac{k}{2}}\frac{[x(z)-x_{k-2}(t)]^{(k-1)}%
}{[\Gamma(k)]_{q}}f(t)d_{\delta}x(t).
\end{align}
where $y_{k-1}(s)$ is defined in $\{a+\frac{k}{2},\operatorname{mod}(1)\},$and
$y_{k}(z)$ is defined in $\{a+\frac{k+1}{2},\operatorname{mod}(1)\}.$

\begin{definition}
\label{sumcentral}For any $\operatorname{Re}\alpha\in R^{+},$ the $\alpha-th$
fractional central sum of $f(z)$ on non-uniform lattices $x(s)$ is defined by
\end{definition}

\begin{equation}
\delta_{0}^{-\alpha}f(z)=\int_{a+\frac{1}{2}}^{z-\frac{\alpha}{2}}%
\frac{[x(z)-x_{\alpha-2}(t)]^{(\alpha-1)}}{[\Gamma(\alpha)]_{q}}f(t)d_{\delta
}x(t).
\end{equation}
where $\delta^{-\alpha}f(z)$ is defined in $\{a+\frac{\alpha+1}{2}%
,\operatorname{mod}(1)\},$ and $f(t)$ is defined in $\{a+\frac{1}%
{2},\operatorname{mod}(1)\}.$

\begin{definition}
\label{def40}Let $\delta f(z)=f(z+\frac{1}{2})-f(z-\frac{1}{2}),\delta
x(z)=x(z+\frac{1}{2})-x(z-\frac{1}{2}),$ the central difference of $f(z)$ on
$x(z)$ is defined by%
\begin{equation}
\delta_{0}f(z)=\frac{\delta f(z)}{\delta x(z)}=\frac{f(z+\frac{1}%
{2})-f(z-\frac{1}{2})}{x(z+\frac{1}{2})-x(z-\frac{1}{2})},
\end{equation}
and
\begin{equation}
\delta_{0}^{m}f(z)=\delta_{0}[\delta_{0}^{m-1}f(z)],m=1,2,...
\end{equation}

\end{definition}

\begin{definition}
\label{rlcentral}Let $m$ be the smallest integer exceeding $\alpha$, \ the
Riemann-Liouville central fractional difference is defined by%
\begin{equation}
\delta_{0}^{\alpha}f(z)=\delta_{0}^{m}(\delta_{0}^{\alpha-m}f(z)). \label{rlc}%
\end{equation}
where $f(z)$ is defined in $\{a+\frac{1}{2},\operatorname{mod}(1)\},$
$(\delta_{0}^{\alpha-m}f(z))$ is defined in $\{a+\frac{m-\alpha+1}%
{2},\operatorname{mod}(1)\}$, and $\delta_{0}^{\alpha}f(z)$ is defined in
$\{a+\frac{-\alpha+1}{2},\operatorname{mod}(1)\}$
\end{definition}

Let us calculate the right hand of Eq. (\ref{rlc}). First, by
\textbf{Definition} \ref{sumcentral} and \textbf{Definition} \ref{def40}, one has%

\begin{align*}
\delta_{0}(\delta_{0}^{\alpha-m}f(z))  &  =\frac{\delta}{\delta x(z)}%
\int_{a+\frac{1}{2}}^{z-\frac{m-\alpha}{2}}\frac{[x(z)-x_{m-\alpha
-2}(t)]^{(m-\alpha-1)}}{[\Gamma(m-\alpha)]_{q}}f(t)d_{\delta}x(t)\\
&  =\frac{1}{\delta x(z)}\{\int_{a+\frac{1}{2}}^{z+\frac{1}{2}-\frac{m-\alpha
}{2}}\frac{[x(z+\frac{1}{2})-x_{m-\alpha-2}(t)]^{(m-\alpha-1)}}{[\Gamma
(m-\alpha)]_{q}}f(t)d_{\delta}x(t)\\
&  -\int_{a+\frac{1}{2}}^{z-\frac{1}{2}-\frac{m-\alpha}{2}}\frac{[x(z-\frac
{1}{2})-x_{m-\alpha-2}(t)]^{(m-\alpha-1)}}{[\Gamma(m-\alpha)]_{q}%
}f(t)d_{\delta}x(t)\}\\
&  =\frac{1}{\delta x(z)}\{\int_{a+\frac{1}{2}}^{z+\frac{1}{2}-\frac{m-\alpha
}{2}}\frac{\delta\lbrack x(z)-x_{m-\alpha-2}(t)]^{(m-\alpha-1)}}%
{[\Gamma(m-\alpha)]_{q}}f(t)d_{\delta}x(t)\\
&  +\frac{[x(z-\frac{1}{2})-x_{m-\alpha-2}(t)]^{(m-\alpha-1)}}{[\Gamma
(m-\alpha)]_{q}}f(t)\delta x(t)|_{t=z+\frac{1}{2}-\frac{m-\alpha}{2}}\}\\
&  =\frac{1}{\delta x(z)}\int_{a+\frac{1}{2}}^{z+\frac{1}{2}-\frac{m-\alpha
}{2}}\frac{\delta\lbrack x(z)-x_{m-\alpha-2}(t)]^{(m-\alpha-1)}}%
{[\Gamma(m-\alpha)]_{q}}f(t)d_{\delta}x(t)\\
&  =\int_{a+\frac{1}{2}}^{z-\frac{m-(\alpha+1)}{2}}\frac{[x(z)-x_{m-(\alpha
+1)-2}(t)]^{[m-(\alpha+1)-1]}}{[\Gamma\lbrack m-(\alpha+1)]]_{q}}%
f(t)d_{\delta}x(t).
\end{align*}
Then, one gets further%

\begin{align*}
\delta_{0}^{2}(\delta_{0}^{\alpha-m}f(z))  &  =\delta_{0}[\delta_{0}%
(\delta_{0}^{\alpha-m}f(z))]\\
&  =\frac{\delta}{\delta x(z)}\int_{a+\frac{1}{2}}^{z-\frac{m-(\alpha+1)}{2}%
}\frac{[x(z)-x_{m-(\alpha+1)-2}(t)]^{[m-(\alpha+1)-1]}}{[\Gamma\lbrack
m-(\alpha+1)]]_{q}}f(t)d_{\delta}x(t),
\end{align*}
In the same way, we obtain%

\begin{align*}
&  \frac{\delta}{\delta x(z)}\int_{a+\frac{1}{2}}^{z-\frac{m-(\alpha+1)}{2}%
}\frac{[x(z)-x_{m-(\alpha+1)-2}(t)]^{[m-(\alpha+1)-1]}}{[\Gamma\lbrack
m-(\alpha+1)]]_{q}}f(t)d_{\delta}x(t)\\
&  =\int_{a+\frac{1}{2}}^{z-\frac{m-(\alpha+2)}{2}}\frac{[x(z)-x_{m-(\alpha
+2)-2}(t)]^{[m-(\alpha+2)-1]}}{[\Gamma\lbrack m-(\alpha+2)]]_{q}}%
f(t)d_{\delta}x(t).
\end{align*}
That is%

\[
\delta_{0}^{2}(\delta_{0}^{\alpha-m}f(z))=\int_{a+\frac{1}{2}}^{z-\frac
{m-(\alpha+2)}{2}}\frac{[x(z)-x_{m-(\alpha+2)-2}(t)]^{[m-(\alpha+2)-1]}%
}{[\Gamma\lbrack m-(\alpha+2)]]_{q}}f(t)d_{\delta}x(t).
\]
And, by induction, we conclude that%

\begin{align}
\delta_{0}^{m}(\delta_{0}^{\alpha-m}f(z))  &  =\int_{a+\frac{1}{2}}%
^{z-\frac{m-(\alpha+m)}{2}}\frac{[x(z)-x_{m-(\alpha+2)-2}(t)]^{[m-(\alpha
+m)-1]}}{[\Gamma\lbrack m-(\alpha+m)]]_{q}}f(t)d_{\delta}x(t)\nonumber\\
&  =\int_{a+\frac{1}{2}}^{z+\frac{\alpha}{2}}\frac{[x(z)-x_{-\alpha
-2}(t)]^{(-\alpha-1)}}{[\Gamma(-\alpha)]_{q}}f(t)d_{\delta}x(t). \label{rld}%
\end{align}

Therefore from Eq. (\ref{rld}), we can give the following equivalent
definition of the $\alpha$-th Riemann-Liouville central fractional difference:

\begin{definition}
\label{rlcentral2}Assume that $\alpha\notin N,$ let $m$ be the smallest
integer exceeding $\alpha>0$, \ the $\alpha$-th Riemann-Liouville central
fractional difference can be defined by%
\begin{equation}
\delta_{0}^{\alpha}f(z)=\int_{a+\frac{1}{2}}^{z+\frac{\alpha}{2}}%
\frac{[x(z)-x_{-\alpha-2}(t)]^{(-\alpha-1)}}{[\Gamma(-\alpha)]_{q}%
}f(t)d_{\delta}x(t), \label{rlc2}%
\end{equation}
where $f(z)$ is defined in $\{a+\frac{1}{2},\operatorname{mod}(1)\},\delta
_{0}^{\alpha}f(z)$ is defined in $\{a+\frac{-\alpha+1}{2},\operatorname{mod}%
(1)\}$.
\end{definition}

We can also give the definition of the Caputo central fractional difference as follows:

\begin{definition}
\label{caputocentral}Let $m$ be the smallest integer exceeding $\alpha$, \ the
$\alpha$-th Caputo central fractional difference is defined by%
\begin{equation}
^{C}\delta_{0}^{\alpha}f(z)=\delta_{0}^{\alpha-m}\delta_{0}^{m}f(z).
\end{equation}

\end{definition}

We should mention that it is also important to establish the analogue Euler
Beta formula on non-uniform lattices with respect to the central fractional sum.

\begin{theorem}
\label{eulerbetac}For any $\alpha,\beta,$we have%
\begin{align}
&  \int_{a+\frac{1}{2}+\frac{\alpha}{2}}^{z-\frac{\beta}{2}}\frac
{[x(z)-x_{\beta-2}(t)]^{(\beta-1)}}{[\Gamma(\beta)]_{q}}\cdot\frac{\lbrack
x(t)-x_{\alpha-1}(a)]^{(\alpha)}}{[\Gamma(\alpha+1)]_{q}}d_{\delta
}x(t)\nonumber\\
&  =\frac{[x(z)-x_{\alpha+\beta-1}(a)]^{(\alpha+\beta)}}{[\Gamma(\alpha
+\beta+1)]_{q}}. \label{central}%
\end{align}

\end{theorem}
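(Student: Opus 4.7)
The plan is to parallel the proof of Theorem~\ref{eulerbeta} with the backward difference $\nabla$ replaced by the central difference $\delta$, adjusting every lattice shift by the appropriate half-integer to match the central-grid conventions of Definition~\ref{sumcentral}. Let $I(\alpha)$ denote the left-hand side of \eqref{central}. I would first introduce two auxiliary functions,
\[
\rho(t)=[x(z)-x_{\beta-2}(t)]^{(\beta-1)}\,[x(t)-x_{\alpha-1}(a)]^{(\alpha)},
\]
\[
\sigma(t)=[x(t+\tfrac12)-x_{\alpha}(a)]\,[x(z)-x_{\beta-1}(t-\tfrac12)],
\]
chosen so that, by the absorption identities in Proposition~\ref{properties},
\[
\sigma(t)\rho(t)=[x(t+\tfrac12)-x_{\alpha}(a)]^{(\alpha+1)}\,[x(z)-x_{\beta-1}(t-\tfrac12)]^{(\beta)}.
\]

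Next, I would apply the central Leibniz rule $\delta(fg)(t)=f(t+\tfrac12)\delta g(t)+g(t-\tfrac12)\delta f(t)$ together with the central-shift differentiation rules
\[
\frac{\delta}{\delta x(t)}[x(t+\tfrac12)-x_{\alpha}(a)]^{(\alpha+1)}=[\alpha+1]_{q}\,[x(t)-x_{\alpha-1}(a)]^{(\alpha)},
\]
\[
\frac{\delta}{\delta x(t)}[x(z)-x_{\beta-1}(t-\tfrac12)]^{(\beta)}=-[\beta]_{q}\,[x(z)-x_{\beta-2}(t)]^{(\beta-1)},
\]
which follow from Proposition~\ref{properties} by specialising $\nu$ in the half-shifted setting. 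This yields $\delta[\sigma(t)\rho(t)]/\delta x(t)=\tau(t)\rho(t)$ for an explicit $\tau(t)$ linear in $x(t)$, with coefficients $[\alpha+1]_q$ and $[\beta]_q$. I would then prove a central analog of Lemma~\ref{lem13}, showing by a substitution that reduces to Lemma~\ref{lem12} that $\tau(t)=[\alpha+1]_{q}[x(z)-x_{\alpha+\beta-1}(a)]-[\alpha+\beta+1]_{q}[x(t)-c]$ for a constant $c$ independent of $t$.

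Summing the resulting identity $\delta[\sigma(t)\rho(t)]=\tau(t)\rho(t)\delta x(t)$ over the central grid $t=a+\tfrac12+\tfrac{\alpha}{2},\ldots,z-\tfrac{\beta}{2}$, the telescoping left-hand side vanishes because $\sigma(t)\rho(t)$ carries the factor $[x(t+\tfrac12)-x_{\alpha}(a)]^{(\alpha+1)}$ that is zero at the lower endpoint and the factor $[x(z)-x_{\beta-1}(t-\tfrac12)]^{(\beta)}$ that is zero at the upper endpoint. Coupled with the analog of Lemma~\ref{lem13}, this produces the first-order recursion
\[
\frac{I(\alpha+1)}{I(\alpha)}=\frac{[\alpha+1]_{q}}{[\alpha+\beta+1]_{q}}\,[x(z)-x_{\alpha+\beta-1}(a)],
\]
which forces $I(\alpha)=k\,[\Gamma(\alpha+1)]_{q}[x(z)-x_{\alpha+\beta-1}(a)]^{(\alpha+\beta)}/[\Gamma(\alpha+\beta+1)]_{q}$ for some constant $k$. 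Setting $\alpha=0$ and computing $I(0)$ directly by recognising the integrand as the central anti-difference of $-[x(z)-x_{\beta-1}(t-\tfrac12)]^{(\beta)}/[\Gamma(\beta+1)]_{q}$ gives $I(0)=[x(z)-x_{\beta-1}(a)]^{(\beta)}/[\beta]_{q}$, pinning down $k=[\Gamma(\beta)]_{q}$ and completing the proof.

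The main obstacle is bookkeeping rather than invention: every step of the proof of Theorem~\ref{eulerbeta} that shifted an index by $\pm 1$ must be matched to a shift by $\pm\tfrac12$ here, and the three places where this matters---the choice of subscripts on $x_\nu$ in $\sigma$ and $\rho$, the establishment of the central analog of Lemma~\ref{lem13}, and the simultaneous vanishing of both endpoint factors of $\sigma\rho$ at $t=a+\tfrac12+\tfrac\alpha2$ and $t=z-\tfrac\beta2$---must be made mutually consistent. Once those shifts are fixed, the algebraic structure and the endpoint computation proceed in exact parallel to the non-central case.
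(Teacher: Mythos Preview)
Your plan is sound in principle---redoing the entire machinery of Theorem~\ref{eulerbeta} with $\nabla$ replaced by $\delta$ and half-integer shifts throughout would indeed work, since Proposition~\ref{properties} and Lemma~\ref{lem12} are insensitive to which shifted lattice $x_\nu$ one works on. But the paper takes a much shorter route: rather than reproving the Beta formula from scratch, it observes that the central sum on $x$ is just a backward sum on a shifted lattice. Concretely, the substitution $\overline{t}=t+\tfrac12-\tfrac{\alpha}{2}$, $\overline{z}=z+\tfrac12-\tfrac{\alpha+\beta}{2}$ turns $d_\delta x(t)$ into $d_\nabla x_\alpha(\overline{t})$, sends $[x(t)-x_{\alpha-1}(a)]^{(\alpha)}$ to $[x_{\alpha-1}(\overline{t})-x_{\alpha-1}(a)]^{(\alpha)}$ and $[x(z)-x_{\beta-2}(t)]^{(\beta-1)}$ to $[x_{\alpha+\beta-1}(\overline{z})-x_{\alpha+\beta-1}(\overline{t}-1)]^{(\beta-1)}$, and maps the limits $a+\tfrac12+\tfrac\alpha2\le t\le z-\tfrac\beta2$ to $a+1\le\overline{t}\le\overline{z}$. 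The resulting integral is exactly an instance of Theorem~\ref{eulerbeta} on the lattice $x_{\alpha-1}$, and the right-hand side transforms correspondingly.

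So your approach buys self-containment at the cost of repeating a lengthy computation (choice of $\sigma,\rho$, central Leibniz rule, a central analog of Lemma~\ref{lem13}, endpoint vanishing, recursion in $\alpha$, and the $\alpha=0$ normalization). The paper's approach buys economy: one substitution and a direct appeal to the already-proven Theorem~\ref{eulerbeta}, exploiting the fact that $x_\gamma$ is again a non-uniform lattice of the same type. The key insight you are missing is that the central and backward settings here are related by a change of variables, not merely by formal analogy.
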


\begin{proof}
Since
\[
a+\frac{1}{2}+\frac{\alpha}{2}\leq t\leq z-\frac{\beta}{2},
\]
we have%
\[
a+1\leq t+\frac{1}{2}-\frac{\alpha}{2}\leq z+\frac{1}{2}-\frac{\alpha+\beta
}{2}.
\]
Set
\[%
%TCIMACRO{\QATOPD{\{}{.}{t+\frac{1}{2}-\frac{\alpha}{2}=\overline{t}}%
%{z+\frac{1}{2}-\frac{\alpha+\beta}{2}=\overline{z}}}%
%BeginExpansion
\genfrac{\{}{.}{0pt}{}{t+\frac{1}{2}-\frac{\alpha}{2}=\overline{t}}{z+\frac
{1}{2}-\frac{\alpha+\beta}{2}=\overline{z}}%
%EndExpansion
\]
then%

\[%
%TCIMACRO{\QATOPD{\{}{.}{t=\overline{t}-\frac{1}{2}+\frac{\alpha}{2}}%
%{z=z-\frac{1}{2}+\frac{\alpha+\beta}{2}}}%
%BeginExpansion
\genfrac{\{}{.}{0pt}{}{t=\overline{t}-\frac{1}{2}+\frac{\alpha}{2}%
}{z=z-\frac{1}{2}+\frac{\alpha+\beta}{2}}%
%EndExpansion
.
\]
The LHS of Eq. (\ref{central}) is equivalent to%

\begin{align*}
&  \int_{a+1}^{\overline{z}}\frac{[x(\overline{z}+\frac{\alpha+\beta-1}%
{2})-x_{\beta-2}(\overline{t}+\frac{\alpha+\beta-1}{2})]^{(\beta-1)}}%
{[\Gamma(\beta)]_{q}}\frac{[x(\overline{t}+\frac{\alpha-1}{2})-x_{\alpha
-1}(a)]^{(\alpha)}}{[\Gamma(\alpha+1)]_{q}}d_{\delta}x(\overline{t}%
+\frac{\alpha-1}{2})\\
&  =\int_{a+1}^{\overline{z}}\frac{[x_{\alpha+\beta-1}(\overline{z}%
)-x_{\alpha+\beta-1}(\overline{t}-1)]^{(\beta-1)}}{[\Gamma(\beta)]_{q}}%
\frac{[x_{\alpha-1}(\overline{t})-x_{\alpha-1}(a)]^{(\alpha)}}{[\Gamma
(\alpha+1)]_{q}}d_{\nabla}x_{\alpha}(\overline{t}),
\end{align*}
and the RHS is is equivalent to%
\begin{align*}
&  \frac{[x(\overline{z}+\frac{\alpha+\beta-1}{2})-x_{\alpha+\beta
-1}(a)]^{(\alpha+\beta)}}{[\Gamma(\alpha+\beta+1)]_{q}}\\
&  =\frac{[x_{\alpha+\beta-1}(\overline{z})-x_{\alpha+\beta-1}(a)]^{(\alpha
+\beta)}}{[\Gamma(\alpha+\beta+1)]_{q}}.
\end{align*}
By the use of Euler Beta \textbf{Theorem} \ref{eulerbeta} on non-uniform
lattices, \textbf{Theorem} \ref{eulerbetac} is completed.
\end{proof}

\begin{proposition}
\label{banqunc}For any $\operatorname{Re}\alpha,\operatorname{Re}\beta>0,$we
have%
\begin{equation}
\delta_{0}^{-\beta}\delta_{0}^{-\alpha}f(z)=\delta_{0}^{-(\alpha+\beta)}f(z).
\end{equation}
where $f(z)$ is defined in $\{a+\frac{1}{2},\operatorname{mod}(1)\},\delta
^{-\alpha}f(z)$ is defined in $\{a+\frac{\alpha+1}{2},\operatorname{mod}%
(1)\},$ and $(\delta^{-\beta}\delta^{-\alpha}f)(z)$ is defined in
$\{a+\frac{\alpha+\beta+1}{2},\operatorname{mod}(1)\}.$
\end{proposition}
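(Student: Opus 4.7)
The plan is to mirror the strategy used for Proposition \ref{banqun}: expand both nested central sums by the definition, interchange the order of summation in $t$ and $s$, and then collapse the inner sum by applying the central analogue of Euler's Beta formula (Theorem \ref{eulerbetac}) in place of the ordinary one.

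First I would write out the left-hand side using Definition \ref{sumcentral} twice. Set $g(s)=\delta_{0}^{-\alpha}f(s)$, which lives on the grid $\{a+\frac{\alpha+1}{2},\operatorname{mod}(1)\}$, and observe that
\[
\delta_{0}^{-\beta}\delta_{0}^{-\alpha}f(z)
=\int_{a+\frac{\alpha+1}{2}}^{z-\frac{\beta}{2}}
 \frac{[x(z)-x_{\beta-2}(s)]^{(\beta-1)}}{[\Gamma(\beta)]_q}\,g(s)\,d_{\delta}x(s),
\]
with
\[
g(s)=\int_{a+\frac{1}{2}}^{s-\frac{\alpha}{2}}
 \frac{[x(s)-x_{\alpha-2}(t)]^{(\alpha-1)}}{[\Gamma(\alpha)]_q}\,f(t)\,d_{\delta}x(t).
\]
Substituting and interchanging the order of the two discrete $\delta$-integrations (here $t$ runs over $\{a+\frac{1}{2},\operatorname{mod}(1)\}$ and $s$ over $\{a+\frac{\alpha+1}{2},\operatorname{mod}(1)\}$, so for fixed $t$ the variable $s$ ranges from $t+\frac{\alpha}{2}$ up to $z-\frac{\beta}{2}$), one gets
\[
\delta_{0}^{-\beta}\delta_{0}^{-\alpha}f(z)
=\int_{a+\frac{1}{2}}^{z-\frac{\alpha+\beta}{2}} f(t)\,d_{\delta}x(t)
 \int_{t+\frac{\alpha}{2}}^{z-\frac{\beta}{2}}
 \frac{[x(z)-x_{\beta-2}(s)]^{(\beta-1)}}{[\Gamma(\beta)]_q}
 \frac{[x(s)-x_{\alpha-2}(t)]^{(\alpha-1)}}{[\Gamma(\alpha)]_q}\,d_{\delta}x(s).
\]

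Next I would apply Theorem \ref{eulerbetac} to the inner $s$-integral, with the lower endpoint $a$ there replaced by $t$ and with $\alpha-1$ in place of $\alpha$ (and $\beta$ unchanged). After the obvious relabelling this gives
\[
\int_{t+\frac{\alpha}{2}}^{z-\frac{\beta}{2}}
 \frac{[x(z)-x_{\beta-2}(s)]^{(\beta-1)}}{[\Gamma(\beta)]_q}
 \frac{[x(s)-x_{\alpha-2}(t)]^{(\alpha-1)}}{[\Gamma(\alpha)]_q}\,d_{\delta}x(s)
=\frac{[x(z)-x_{\alpha+\beta-2}(t)]^{(\alpha+\beta-1)}}{[\Gamma(\alpha+\beta)]_q}.
\]
Inserting this identity back and recognising the resulting expression as the definition of $\delta_{0}^{-(\alpha+\beta)}f(z)$ finishes the proof:
\[
\delta_{0}^{-\beta}\delta_{0}^{-\alpha}f(z)
=\int_{a+\frac{1}{2}}^{z-\frac{\alpha+\beta}{2}}
 \frac{[x(z)-x_{\alpha+\beta-2}(t)]^{(\alpha+\beta-1)}}{[\Gamma(\alpha+\beta)]_q}\,f(t)\,d_{\delta}x(t)
=\delta_{0}^{-(\alpha+\beta)}f(z).
\]

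The routine parts are the two expansions and the final recognition step; the main obstacle is the bookkeeping in the middle, i.e.\ verifying that (a) the interchange of the order of the two discrete integrations produces exactly the limits $t\in[a+\frac{1}{2},z-\frac{\alpha+\beta}{2}]$ and $s\in[t+\frac{\alpha}{2},z-\frac{\beta}{2}]$ consistent with the mod-$1$ grid shifts recorded in Definition \ref{sumcentral}, and (b) the generalized-power kernels appearing in the inner integral match, after the shift $a\mapsto t$, the hypotheses of Theorem \ref{eulerbetac} with the correct subscripts $\alpha-2$, $\beta-2$ on the lattice $x_{\nu}$. Once these index and endpoint checks are carried out carefully, Theorem \ref{eulerbetac} does all the analytic work, exactly as Theorem \ref{eulerbeta} did in Proposition \ref{banqun}.
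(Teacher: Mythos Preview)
Your proposal is correct and follows essentially the same route as the paper's proof: expand via Definition \ref{sumcentral}, interchange the two discrete $\delta$-integrals to obtain the limits $t\in[a+\tfrac12,\,z-\tfrac{\alpha+\beta}{2}]$ and $s\in[t+\tfrac{\alpha}{2},\,z-\tfrac{\beta}{2}]$, apply Theorem \ref{eulerbetac} to collapse the inner integral, and recognise the result as $\delta_0^{-(\alpha+\beta)}f(z)$. Apart from a harmless swap of the dummy variables $s$ and $t$, the argument and the index bookkeeping match the paper's proof.
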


\begin{proof}
By \textbf{Definition} \ref{sumcentral}, we have
\begin{align*}
&  \int_{a+\frac{1}{2}+\frac{\alpha}{2}}^{z-\frac{\beta}{2}}\frac
{[x(z)-x_{\beta-2}(t)]^{(\beta-1)}}{[\Gamma(\beta)]_{q}}\nabla_{\gamma
}^{-\alpha}f(t)d_{\delta}(t)\\
&  =\int_{a+\frac{1}{2}+\frac{\alpha}{2}}^{z-\frac{\beta}{2}}\frac
{[x(z)-x_{\beta-1}(t)]^{(\beta-1)}}{[\Gamma(\beta)]_{q}}d_{\delta}(t)\\
&  \int_{a+\frac{1}{2}}^{t-\frac{\alpha}{2}}\frac{[x(t)-x_{\alpha
-2}(s)]^{(\alpha-1)}}{[\Gamma(\alpha)]_{q}}f(s)d_{\delta}(s)\\
&  =\int_{a+\frac{1}{2}}^{z-\frac{\alpha+\beta}{2}}f(s)d_{\delta}%
(s)\int_{s+\frac{\alpha}{2}}^{z-\frac{\beta}{2}}\frac{[x(z)-x_{\beta
-2}(t)]^{(\beta-1)}}{[\Gamma(\beta)]_{q}}\\
&  \frac{[x(t)-x_{\alpha-2}(s-1)]^{(\alpha-1)}}{[\Gamma(\alpha)]_{q}}%
d_{\delta}(t).
\end{align*}
In viewer of \textbf{Theorem} \ref{eulerbetac}, one has
\begin{align*}
&  \int_{s+\frac{\alpha}{2}}^{z-\frac{\beta}{2}}\frac{[x(z)-x_{\beta
-2}(t)]^{(\beta-1)}}{[\Gamma(\beta)]_{q}}\frac{[x(t)-x_{\alpha-2}%
(s)]^{(\alpha-1)}}{[\Gamma(\alpha)]_{q}}d_{\delta}(t)\\
&  =\frac{[x(z)-x_{\alpha+\beta-2}(s)]^{(\alpha+\beta-1)}}{[\Gamma
(\alpha+\beta)]_{q}},
\end{align*}
it yields%

\begin{align*}
\delta_{0}^{-\beta}\delta_{0}^{-\alpha}f(z)  &  =\int_{a+\frac{1}{2}}%
^{z-\frac{\alpha+\beta}{2}}\frac{[x(z)-x_{\alpha+\beta-2}(s)]^{(\alpha
+\beta-1)}}{[\Gamma(\alpha+\beta)]_{q}}f(s)\nabla x_{\gamma}(s)\\
&  =\delta_{0}^{-(\alpha+\beta)}f(z).
\end{align*}

\end{proof}

\begin{proposition}
\label{basicc}For any $\operatorname{Re}\alpha>0,$we have%
\[
\delta_{0}^{\alpha}\delta_{0}^{-\alpha}f(z)=f(z).
\]

\end{proposition}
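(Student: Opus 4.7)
The plan mirrors the proof of \textbf{Proposition} \ref{basic} for the backward-difference case, reducing the identity to the semigroup property for fractional central sums together with the integer-order case.

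First, let $m$ be the smallest integer exceeding $\operatorname{Re}\alpha$. Invoking \textbf{Definition} \ref{rlcentral}, I would write
\begin{equation*}
\delta_{0}^{\alpha}\delta_{0}^{-\alpha}f(z)=\delta_{0}^{m}\bigl(\delta_{0}^{\alpha-m}\delta_{0}^{-\alpha}f(z)\bigr)=\delta_{0}^{m}\bigl(\delta_{0}^{-(m-\alpha)}\delta_{0}^{-\alpha}f(z)\bigr),
\end{equation*}
where $m-\alpha\ge 0$, so that the inner operator is a genuine fractional central sum of positive order.

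Next, since $\operatorname{Re}(m-\alpha)\geq 0$ and $\operatorname{Re}\alpha>0$, I apply the semigroup law for central fractional sums established in \textbf{Proposition} \ref{banqunc}, which gives
\begin{equation*}
\delta_{0}^{-(m-\alpha)}\delta_{0}^{-\alpha}f(z)=\delta_{0}^{-((m-\alpha)+\alpha)}f(z)=\delta_{0}^{-m}f(z).
\end{equation*}
(If $m=\alpha$, so that $m-\alpha=0$, this step is trivial; otherwise it is a direct application of the Euler Beta identity of \textbf{Theorem} \ref{eulerbetac} exactly as in the proof of \textbf{Proposition} \ref{banqunc}.) Substituting yields
\begin{equation*}
\delta_{0}^{\alpha}\delta_{0}^{-\alpha}f(z)=\delta_{0}^{m}\delta_{0}^{-m}f(z).
\end{equation*}

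Finally, the integer-order identity $\delta_{0}^{m}\delta_{0}^{-m}f(z)=f(z)$ follows by induction on $m$, using the one-step inversion $\delta_{0}\delta_{0}^{-1}f(z)=f(z)$ that is the central analogue of part (1) of \textbf{Proposition} \ref{integration}, together with the telescoping construction of the iterated central sum $\delta_{0}^{-m}$ given in Section 8. Concretely, from $y_k=\int_{a+k/2}^{z-1/2} y_{k-1}(s)\,d_\delta x(s)$ one has $\delta_0 y_k=y_{k-1}$, whence $\delta_0^m\delta_0^{-m}f=f$ by iteration. Combining the three steps completes the proof.

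The main obstacle I anticipate is making sure the semigroup identity \textbf{Proposition} \ref{banqunc} is indeed available in the form I invoke it (in particular that the domain shifts in $\{a+\tfrac{\alpha+1}{2},\bmod\,1\}$ are compatible with the subsequent application of $\delta_{0}^{m}$), and verifying the limiting case $m-\alpha=0$. Beyond that bookkeeping, the argument is a verbatim central-lattice translation of \textbf{Proposition} \ref{basic}.
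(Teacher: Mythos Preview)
Your proposal is correct and follows essentially the same route as the paper: invoke \textbf{Definition}~\ref{rlcentral} to write $\delta_{0}^{\alpha}\delta_{0}^{-\alpha}f=\delta_{0}^{m}\delta_{0}^{\alpha-m}\delta_{0}^{-\alpha}f$, apply the semigroup property \textbf{Proposition}~\ref{banqunc} to collapse the two sums to $\delta_{0}^{-m}f$, and finish with $\delta_{0}^{m}\delta_{0}^{-m}f=f$. The paper's proof is more terse (it simply asserts the last equality), so your inductive justification of the integer-order cancellation is if anything a welcome addition rather than a deviation.
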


\begin{proof}
By \textbf{Definition} \ref{rlcentral}, we have%
\[
\delta_{0}^{\alpha}\delta_{0}^{-\alpha}f(z)=\delta_{0}^{m}(\delta_{0}%
^{\alpha-m})\delta_{0}^{-\alpha}f(z).
\]
In view of \textbf{Theorem} \ref{banqunc}, one has%
\[
\delta_{0}^{\alpha-m}\delta_{0}^{-\alpha}f(z)=\delta_{0}^{-m}f(z),
\]
it yields that%

\[
\delta_{0}^{\alpha}\delta_{0}^{-\alpha}f(z)=\delta_{0}^{m}\delta_{0}%
^{-m}f(z)=f(z).
\]

\end{proof}

\begin{proposition}
\label{taylorkc} Let $k\in N,$ then%
\begin{equation}
\delta_{0}^{-k}\delta_{0}^{k}f(z)=f(z)-\sum_{j=0}^{k-1}\frac{\delta_{0}%
^{j}f(a)}{[j]_{q}!}[x(z)-x_{j-1}(a)]^{(j)}. \label{ctaylor}%
\end{equation}

\begin{proof}
When $k=1,$ we have%
\begin{align*}
\delta_{0}^{-1}\delta_{0}^{1}f(z)  &  =\sum_{a+\frac{1}{2}}^{z+\frac{1}{2}%
}\delta_{0}^{1}f(s)\delta x(s)\\
&  =\sum_{a+\frac{1}{2}}^{z+\frac{1}{2}}\delta^{1}f(s)=f(z)-f(a).
\end{align*}
Assume that when $n=k,$ (\ref{ctaylor}) holds, then for $n=k+1,$ we conclude
that%
\begin{align*}
\delta_{0}^{-(k+1)}\delta_{0}^{k+1}f(z)  &  =\delta_{0}^{-1}[\delta_{0}%
^{-k}\delta_{0}^{k}]\delta_{0}^{1}f(z)\\
&  =\delta_{0}^{-1}\{\delta_{0}^{1}f(a)-\sum_{j=0}^{k-1}\frac{\delta_{0}%
^{j}[\delta_{0}^{1}f](a)}{[j]_{q}!}[x(z)-x_{j-1}(a)]^{(j)}\\
&  =f(z)-f(a)-\sum_{j=0}^{k-1}\frac{\delta_{0}^{j+1}f(a)}{[j+1]_{q}%
!}[x(z)-x_{j}(a)]^{(j+1)}\\
&  =f(z)-\sum_{j=0}^{k}\frac{\delta_{0}^{j}f(a)}{[j]_{q}!}[x(z)-x_{j-1}%
(a)]^{(j)}.
\end{align*}
Therefore, by the induction, the proof of (\ref{ctaylor}) is completed.
\end{proof}
\end{proposition}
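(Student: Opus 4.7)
The plan is to mirror the induction used in Theorem~\ref{taylork} (the non-central Taylor theorem), replacing the $\nabla_\gamma$ calculus by the central-difference calculus $\delta_0$. The two key ingredients will be a telescoping identity for the base case and an antidifferentiation formula for the generalized central powers.

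First I would handle the base case $k=1$. By Definition~\ref{sumcentral} and Definition~\ref{def40},
\[
\delta_0^{-1}\delta_0^1 f(z) = \sum_{s=a+\frac12}^{z-\frac12} \delta_0 f(s)\,\delta x(s) = \sum_{s=a+\frac12}^{z-\frac12}\bigl[f(s+\tfrac12) - f(s-\tfrac12)\bigr] = f(z) - f(a),
\]
which is the $k=1$ instance of \eqref{ctaylor} (the sum on the right is empty). This is just a telescoping sum and should be immediate.

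Next, I would carry out the inductive step. Assume the formula holds for $k$. Writing $\delta_0^{-(k+1)}\delta_0^{k+1} = \delta_0^{-1}\bigl[\delta_0^{-k}\delta_0^k\bigr]\delta_0^1$, one applies the induction hypothesis to $\delta_0^1 f$ to obtain
\[
\delta_0^{-k}\delta_0^k(\delta_0^1 f)(z) = \delta_0^1 f(z) - \sum_{j=0}^{k-1}\frac{\delta_0^{j+1} f(a)}{[j]_q!}[x(z)-x_{j-1}(a)]^{(j)}.
\]
Applying $\delta_0^{-1}$ to both sides, the first term gives $f(z)-f(a)$ by the base case, and each remaining term requires the antidifferentiation formula
\[
\delta_0^{-1}\bigl\{[x(z)-x_{j-1}(a)]^{(j)}\bigr\} = \frac{1}{[j+1]_q}[x(z)-x_j(a)]^{(j+1)}.
\]
After shifting the summation index $j\mapsto j+1$ and collecting, one reads off the $k+1$ version of \eqref{ctaylor}.

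The main obstacle is proving the antidifferentiation identity in the displayed box above, i.e. establishing a central-difference analogue of the power rule of Proposition~\ref{properties}. The cleanest route is to first prove the ``forward'' formula
\[
\delta_0\bigl\{[x(z)-x_j(a)]^{(j+1)}\bigr\} = [j+1]_q\,[x(z)-x_{j-1}(a)]^{(j)},
\]
by translating between $\delta_0$ and $\nabla_\gamma$ via the shift $z\mapsto z+\tfrac12$ and invoking Proposition~\ref{properties} on the appropriate lattice $x_\nu$; then inverting via the fundamental theorem of central summation (the one-step analogue of Proposition~\ref{integration}) and checking that the boundary term at the lower endpoint $t=a$ vanishes because $[x(a)-x_j(a)]^{(j+1)}$ contains the factor $[x(a)-x(a)]=0$. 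Once this identity is in hand, the induction closes cleanly and the proof is complete.
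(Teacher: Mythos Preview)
Your proposal is correct and follows essentially the same induction as the paper: the same base case via telescoping, the same decomposition $\delta_0^{-(k+1)}\delta_0^{k+1}=\delta_0^{-1}[\delta_0^{-k}\delta_0^{k}]\delta_0^{1}$, and the same antidifferentiation of the central generalized powers. One small slip: for $k=1$ the sum $\sum_{j=0}^{k-1}$ is not empty but consists of the single term $j=0$, which equals $f(a)$ (since $[0]_q!=1$ and $[x(z)-x_{-1}(a)]^{(0)}=1$); this does not affect your argument.
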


\begin{proposition}
\label{ftaylorc} Let $0<k-1<\alpha\leq k,$ then%
\begin{equation}
\delta_{0}^{-\alpha}\delta_{0}^{\alpha}f(z)=f(z)-\sum_{j=0}^{k-1}\delta
_{0}^{j-k+\alpha}f(a)\frac{[x(z)-x_{\alpha+j-k-1}(a)]^{(j+\alpha-k)}}%
{[\Gamma(j+\alpha-k+1)]_{q}}. \label{p2}%
\end{equation}

\end{proposition}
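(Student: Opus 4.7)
The plan is to mirror the structure of the proof of Proposition \ref{ftaylor1} in the central-difference setting, where the absence of a shift subscript actually simplifies the bookkeeping. First I would use Definition \ref{rlcentral} to decompose $\delta_0^{\alpha}f(z)=\delta_0^{k}\delta_0^{\alpha-k}f(z)$ (noting that $\alpha-k\le 0$, so $\delta_0^{\alpha-k}$ is a fractional central sum of order $k-\alpha\in[0,1)$). On the outer side, I would use Proposition \ref{banqunc} together with Proposition \ref{basicc} to rewrite $\delta_0^{-\alpha}=\delta_0^{k-\alpha}\delta_0^{-k}$: this factorization is legitimate because applying $\delta_0^{\alpha-k}$ to both sides recovers the semigroup identity $\delta_0^{\alpha-k}\delta_0^{-\alpha}=\delta_0^{-k}$ of Proposition \ref{banqunc}. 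Thus
\[
\delta_0^{-\alpha}\delta_0^{\alpha}f(z)=\delta_0^{k-\alpha}\,\delta_0^{-k}\delta_0^{k}\,\delta_0^{\alpha-k}f(z).
\]

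Next I would apply the integer-order central Taylor formula (Proposition \ref{taylorkc}) to the middle block $\delta_0^{-k}\delta_0^{k}$ acting on the function $g(z):=\delta_0^{\alpha-k}f(z)$, yielding
\[
\delta_0^{-k}\delta_0^{k}g(z)=g(z)-\sum_{j=0}^{k-1}\frac{\delta_0^{j+\alpha-k}f(a)}{[j]_{q}!}\,[x(z)-x_{j-1}(a)]^{(j)},
\]
where I have used $\delta_0^{j}g(a)=\delta_0^{j+\alpha-k}f(a)$ (interpreted as a fractional sum when $j+\alpha-k<0$ and as a fractional difference when $j+\alpha-k\ge 0$, consistent with the statement of the theorem). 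Then I would apply $\delta_0^{k-\alpha}$ termwise. The constant-term piece collapses by Proposition \ref{basicc}: $\delta_0^{k-\alpha}\delta_0^{\alpha-k}f(z)=f(z)$.

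For the remaining powers, I would establish the central analogue of Lemma \ref{lem22}, namely
\[
\delta_0^{-\mu}(1)=\frac{[x(z)-x_{\mu-1}(a)]^{(\mu)}}{[\Gamma(\mu+1)]_{q}},
\]
by a direct computation using $\delta_0\{[x(z)-x_{\mu-1}(a)]^{(\mu)}\}=[\mu]_{q}[x(z)-x_{\mu-2}(a)]^{(\mu-1)}$ from Proposition \ref{properties}. This rewrites $\frac{[x(z)-x_{j-1}(a)]^{(j)}}{[j]_{q}!}=\delta_0^{-j}(1)$, after which Proposition \ref{banqunc} (in the form $\delta_0^{k-\alpha}\delta_0^{-j}=\delta_0^{-(j-k+\alpha)}$, again using the factorization trick from the first paragraph) gives
\[
\delta_0^{k-\alpha}\frac{[x(z)-x_{j-1}(a)]^{(j)}}{[j]_{q}!}=\frac{[x(z)-x_{j-k+\alpha-1}(a)]^{(j-k+\alpha)}}{[\Gamma(j-k+\alpha+1)]_{q}}.
\]
Collecting everything yields exactly \eqref{p2}.

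The main obstacle I anticipate is the third paragraph: justifying the "mixed" semigroup identity that combines a fractional central difference of order $k-\alpha\in[0,1)$ with a fractional central sum of order $j$, in analogy with Propositions \ref{lem26}--\ref{lem27} in the nabla-setting. The argument has to branch on the sign of $j-k+\alpha$, but the central lattice case is actually cleaner than the nabla case because there is no shift parameter $\gamma$ to track, so the case analysis reduces to invoking Proposition \ref{basicc} once and Proposition \ref{banqunc} once. Everything else is mechanical manipulation of the generalized powers already catalogued in Proposition \ref{properties}.
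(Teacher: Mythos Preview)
Your proposal is correct and follows essentially the same approach as the paper: factor $\delta_0^{-\alpha}\delta_0^{\alpha}=\delta_0^{k-\alpha}\,\delta_0^{-k}\delta_0^{k}\,\delta_0^{\alpha-k}$, apply the integer-order central Taylor formula (Proposition~\ref{taylorkc}) to the middle block, and then push $\delta_0^{k-\alpha}$ through termwise. The paper's own proof is in fact terser than yours---it simply asserts the factorization and the action of $\delta_0^{k-\alpha}$ on the generalized powers---so your explicit appeal to a central analogue of Lemma~\ref{lem22} and your flagging of the mixed semigroup identity are added rigor rather than a different method.
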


\begin{proof}
Since%
\[
\delta_{0}^{-\alpha}\delta_{0}^{\alpha}f(z)=\delta_{0}^{-\alpha+k}\delta
_{0}^{-k}\delta_{0}^{k}\delta_{0}^{-k+\alpha}f(z),
\]
then by the use of (\ref{ctaylor}), one has%
\begin{align*}
\delta_{0}^{-\alpha}\delta_{0}^{\alpha}f(z)  &  =\delta_{0}^{-\alpha
+k}\{\delta_{0}^{-k+\alpha}f(z)-\sum_{j=0}^{k-1}\frac{\delta_{0}^{j}\delta
_{0}^{-k+\alpha}f(a)}{[j]_{q}!}[x(z)-x_{j-1}(a)]^{(j)}\\
&  =f(z)-\sum_{j=0}^{k-1}\delta_{0}^{j-k+\alpha}f(a)\frac{[x(z)-x_{\alpha
+j-k-1}(a)]^{(j+\alpha-k)}}{[\Gamma(j+\alpha-k+1)]_{q}}.
\end{align*}

\end{proof}

\begin{proposition}
Let $0<k-1<q\leq k,$ then%
\begin{equation}
\delta_{0}^{-p}\delta_{0}^{q}f(z)=\delta_{0}^{q-p}f(z)-\sum_{j=1}^{k}%
\delta_{0}^{q-j}f(a)\frac{[x(z)-x_{p-j-1}(a)]^{(p-j)}}{[\Gamma(p-j+1)]_{q}}.
\label{p5}%
\end{equation}

\begin{proof}
Since%
\[
\delta_{0}^{-p}\delta_{0}^{q}f(z)=\delta_{0}^{-p+q}\delta_{0}^{-q}\delta
_{0}^{q}f(z),
\]
then by the use of (\ref{p2}), one has%
\begin{align*}
&  \delta_{0}^{-p+q}\delta_{0}^{-q}\delta_{0}^{q}f(z)\\
&  =\delta_{0}^{-p+q}\{f(z)-\sum_{j=1}^{k}\delta_{0}^{q-j}f(a)\frac
{[x(z)-x_{p-j-1}(a)]^{(q-j)}}{[\Gamma(q-j+1)]_{q}}\}\\
&  =\delta_{0}^{-p+q}f(z)-\sum_{j=1}^{k}\delta_{0}^{q-j}f(a)\frac
{[x(z)-x_{p-j-1}(a)]^{(p-j)}}{[\Gamma(p-j+1)]_{q}},
\end{align*}
the equality (\ref{p5}) is completed.
\end{proof}
\end{proposition}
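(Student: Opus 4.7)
The plan is to reduce $\delta_{0}^{-p}\delta_{0}^{q}f(z)$ to an expression in which the fractional Taylor formula (Proposition \ref{ftaylorc}) applies directly, and then to push the remaining central fractional sum through termwise.

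First, I would invoke the semigroup property of central fractional sums (Proposition \ref{banqunc}) with indices $p-q$ and $q$ to rewrite $\delta_{0}^{-p}=\delta_{0}^{-(p-q)}\delta_{0}^{-q}=\delta_{0}^{q-p}\delta_{0}^{-q}$, and compose on the right with $\delta_{0}^{q}$ to obtain
\begin{equation*}
\delta_{0}^{-p}\delta_{0}^{q}f(z)=\delta_{0}^{q-p}\bigl[\delta_{0}^{-q}\delta_{0}^{q}f(z)\bigr].
\end{equation*}

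Second, I would apply Proposition \ref{ftaylorc} to the bracketed factor. After reindexing that Taylor sum by $i=k-j$ so that $i$ runs from $1$ to $k$, the expansion takes the form
\begin{equation*}
\delta_{0}^{-q}\delta_{0}^{q}f(z)=f(z)-\sum_{i=1}^{k}\delta_{0}^{q-i}f(a)\,\frac{[x(z)-x_{q-i-1}(a)]^{(q-i)}}{[\Gamma(q-i+1)]_{q}},
\end{equation*}
which already matches the subscript/exponent pattern of the claim, except with $q-i$ where the target has $p-i$.

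Third, I would apply $\delta_{0}^{q-p}$ termwise. The constant term gives $\delta_{0}^{q-p}f(z)$ immediately. Each Taylor coefficient $\delta_{0}^{q-i}f(a)$ is constant in $z$, so the only substantive task is to evaluate
\begin{equation*}
\delta_{0}^{-(p-q)}\frac{[x(z)-x_{q-i-1}(a)]^{(q-i)}}{[\Gamma(q-i+1)]_{q}}.
\end{equation*}
I would recognize the integrand as $\delta_{0}^{-(q-i)}(1)$ (the central analogue of Lemma \ref{lem22}, which one extracts directly from Definition \ref{sumcentral} by specializing the central Euler Beta formula, Theorem \ref{eulerbetac}, at $f\equiv 1$). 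One further application of Proposition \ref{banqunc} then collapses $\delta_{0}^{-(p-q)}\delta_{0}^{-(q-i)}(1)$ to $\delta_{0}^{-(p-i)}(1)=\frac{[x(z)-x_{p-i-1}(a)]^{(p-i)}}{[\Gamma(p-i+1)]_{q}}$, which is exactly the summand required by the right-hand side of the claim.

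The main obstacle is this last step: verifying the central-case identity $\delta_{0}^{-\beta}\bigl([x(z)-x_{\alpha-1}(a)]^{(\alpha)}/[\Gamma(\alpha+1)]_{q}\bigr)=[x(z)-x_{\alpha+\beta-1}(a)]^{(\alpha+\beta)}/[\Gamma(\alpha+\beta+1)]_{q}$ and ensuring that every subscript shift in the generalized powers $x_{\cdot}(\cdot)$ is tracked consistently, since the bookkeeping for the central operators differs from the forward/backward case in half-integer offsets. Once that one identity is nailed down, the three steps above compose mechanically to yield the stated formula.
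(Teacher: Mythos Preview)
Your proposal is correct and follows essentially the same route as the paper: decompose $\delta_{0}^{-p}=\delta_{0}^{q-p}\delta_{0}^{-q}$ via the semigroup property, apply the central fractional Taylor formula (Proposition~\ref{ftaylorc}), and then act on each term by $\delta_{0}^{q-p}$. Your write-up is in fact more careful than the paper's, which skips the justification of the termwise power-rule step and carries what appears to be a subscript typo ($x_{p-j-1}$ in place of $x_{q-j-1}$) in the intermediate expansion.
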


\begin{proposition}
Let $0<k-1<q\leq k,$ $p>0,$ then%
\begin{equation}
\delta_{0}^{p}\delta_{0}^{q}f(z)=\delta_{0}^{p+q}f(z)-\sum_{j=1}^{k}\delta
_{0}^{q-j}f(a)\frac{[x(z)-x_{-p-j-1}(a)]^{(-p-j)}}{[\Gamma(-p-j+1)]_{q}}.
\label{p6}%
\end{equation}

\end{proposition}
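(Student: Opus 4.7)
The plan is to reduce this statement to the previously established identity (\ref{p5}) by writing the positive-order central difference $\delta_0^p$ as an integer-order central difference acting on a central sum. Concretely, let $n$ be the smallest integer exceeding $p$, so that $n-p>0$ and, by Definition \ref{rlcentral},
\[
\delta_{0}^{p}=\delta_{0}^{n}\,\delta_{0}^{\,p-n}=\delta_{0}^{n}\,\delta_{0}^{-(n-p)}.
\]

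First I would use this factorization to write $\delta_0^{p}\delta_0^{q}f(z)=\delta_0^{n}\bigl(\delta_0^{-(n-p)}\delta_0^{q}f(z)\bigr)$, and then invoke Proposition (\ref{p5}) with $p$ replaced by $n-p$ (which is positive) to expand the bracket:
\[
\delta_{0}^{-(n-p)}\delta_{0}^{q}f(z)=\delta_{0}^{q-(n-p)}f(z)-\sum_{j=1}^{k}\delta_{0}^{q-j}f(a)\frac{[x(z)-x_{n-p-j-1}(a)]^{(n-p-j)}}{[\Gamma(n-p-j+1)]_{q}}.
\]

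Second, I would apply $\delta_0^{n}$ termwise. For the lead term, a central analog of Proposition \ref{lem27} (which is the semigroup property $\delta_0^{n}\delta_0^{q-n+p}=\delta_0^{p+q}$ since $n-(q-n+p)$ has the correct sign by the choice of $n$) gives $\delta_0^{n}\delta_0^{q-(n-p)}f(z)=\delta_0^{p+q}f(z)$. For each correction term, I would iterate the central version of the power rule from Proposition \ref{properties}, namely $\delta_{0}\bigl\{[x(z)-x_{\mu-1}(a)]^{(\mu)}/[\Gamma(\mu+1)]_q\bigr\}=[x(z)-x_{\mu-2}(a)]^{(\mu-1)}/[\Gamma(\mu)]_q$, applied $n$ times with $\mu=n-p-j$. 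This collapses to
\[
\delta_{0}^{n}\frac{[x(z)-x_{n-p-j-1}(a)]^{(n-p-j)}}{[\Gamma(n-p-j+1)]_{q}}=\frac{[x(z)-x_{-p-j-1}(a)]^{(-p-j)}}{[\Gamma(-p-j+1)]_{q}},
\]
which is precisely what appears on the right-hand side of (\ref{p6}). Substituting these two simplifications back into the expansion yields the claimed identity.

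The main obstacle I expect is the bookkeeping in the second step: verifying that the semigroup property $\delta_0^{n}\delta_0^{q-(n-p)}=\delta_0^{p+q}$ holds in the specific regime where the inner order can be negative, and that the lattice-shift parameters $x_{\mu-1},x_{\mu-2},\ldots$ telescope correctly under repeated application of $\delta_0$, so that the subscripts land at $-p-j-1$ and the Pochhammer-type factors coalesce into $[\Gamma(-p-j+1)]_q$. Once the analogs of Lemma \ref{lem22} and Proposition \ref{lem27} for central operators are in hand (they follow from the same Euler-Beta identity of Theorem \ref{eulerbetac} that already powered Propositions \ref{banqunc} and \ref{basicc}), the remainder of the argument is a direct substitution.
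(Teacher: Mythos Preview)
Your proposal is correct and follows essentially the same route as the paper: factor $\delta_0^{p}=\delta_0^{m}\delta_0^{-(m-p)}$ with $m-1<p\le m$, apply (\ref{p5}) to $\delta_0^{-(m-p)}\delta_0^{q}f$, and then hit both the lead term and each generalized-power correction with $\delta_0^{m}$, using the iterated power rule to drop the exponents from $m-p-j$ to $-p-j$. The only difference is cosmetic (your $n$ is the paper's $m$), and the paper, like you, leaves the semigroup identity $\delta_0^{m}\delta_0^{-m+p+q}=\delta_0^{p+q}$ and the telescoping of the power terms as implicit bookkeeping.
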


\begin{proof}
Let $m-1<p\leq m,$ In view of
\[
\delta_{0}^{p}\delta_{0}^{q}f(z)=\delta_{0}^{m}\delta_{0}^{-m+p}\delta_{0}%
^{q}f(z),
\]
then by the use of (\ref{p5}), we have%
\begin{align*}
&  \delta_{0}^{m}\delta_{0}^{-m+p}\delta_{0}^{q}f(z)\\
&  =\delta_{0}^{m}\{\delta_{0}^{-m+p+q}f(z)-\sum_{j=1}^{k}\delta_{0}%
^{q-j}f(a)\frac{[x(z)-x_{m-p-j-1}(a)]^{(m-p-j)}}{[\Gamma(m-p-j+1)]_{q}}\}\\
&  =\delta_{0}^{p+q}f(z)-\sum_{j=1}^{k}\delta_{0}^{q-j}f(a)\frac
{[x(z)-x_{-p-j-1}(a)]^{(-p-j)}}{[\Gamma(-p-j+1)]_{q}},
\end{align*}
the equality (\ref{p6}) is completed.
\end{proof}

The relationship between Riemann-Liouville fractional difference and Caputo
fractional difference is

\begin{proposition}
\label{relation}We have%
\[
^{C}\delta_{0}^{\alpha}f(z)=\delta_{0}^{\alpha}\{f(z)-\sum_{j=1}^{m-1}%
\frac{\delta_{0}^{j}f(a)}{[j]_{q}!}[x(z)-x_{j-1}(a)]^{(j)}\}.
\]

\end{proposition}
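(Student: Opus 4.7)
The plan is to mimic the argument given for Proposition \ref{pro20}, adapting it to the central-difference setting. Starting from the definition of the Caputo central fractional difference,
\[
{}^{C}\delta_{0}^{\alpha}f(z)=\delta_{0}^{\alpha-m}\delta_{0}^{m}f(z),
\]
I would first use Proposition \ref{basicc}, which gives $\delta_{0}^{\alpha}\delta_{0}^{-\alpha}=I$, to insert an identity on the left:
\[
{}^{C}\delta_{0}^{\alpha}f(z)=\delta_{0}^{\alpha}\bigl(\delta_{0}^{-\alpha}\delta_{0}^{\alpha-m}\bigr)\delta_{0}^{m}f(z).
\]

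Next, since $m$ is the smallest integer exceeding $\operatorname{Re}\alpha$, both $\alpha$ and $m-\alpha$ are positive, so Proposition \ref{banqunc} (the semigroup property for negative orders) gives
\[
\delta_{0}^{-\alpha}\delta_{0}^{\alpha-m}=\delta_{0}^{-\alpha}\delta_{0}^{-(m-\alpha)}=\delta_{0}^{-m}.
\]
Substituting this back yields
\[
{}^{C}\delta_{0}^{\alpha}f(z)=\delta_{0}^{\alpha}\bigl(\delta_{0}^{-m}\delta_{0}^{m}f(z)\bigr).
\]
Now I would invoke the central Taylor formula (Proposition \ref{taylorkc}) at order $k=m$, which says
\[
\delta_{0}^{-m}\delta_{0}^{m}f(z)=f(z)-\sum_{j=0}^{m-1}\frac{\delta_{0}^{j}f(a)}{[j]_{q}!}\,[x(z)-x_{j-1}(a)]^{(j)}.
\]
Plugging this into the previous identity gives exactly the claimed representation of ${}^{C}\delta_{0}^{\alpha}f(z)$ as $\delta_{0}^{\alpha}$ applied to $f$ minus its discrete central Taylor polynomial at $a$.

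The only real obstacle is bookkeeping: one must check that the base point $a$ (or $a+\frac{1}{2}$, depending on the parity of the operator stack) is consistent throughout the chain, so that Proposition \ref{banqunc} and Proposition \ref{basicc}, both formulated for the same lower limit, apply verbatim. Once that is verified, the entire argument is a three-line combination of semigroup, inversion, and Taylor. It is worth noting that the $j=0$ term from Proposition \ref{taylorkc} contributes $f(a)$, so the identity is most naturally stated with the summation index starting at $j=0$; the version in the theorem with $j\geq 1$ implicitly absorbs the constant $f(a)$ into $f(z)$, and this cosmetic rewriting should be noted at the end of the proof.
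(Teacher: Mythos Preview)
Your argument is correct and follows essentially the same route as the paper: the paper writes ${}^{C}\delta_{0}^{\alpha}f(z)=\delta_{0}^{\alpha-m}\delta_{0}^{m}f(z)=\delta_{0}^{\alpha}\delta_{0}^{-m}\delta_{0}^{m}f(z)$ in one line and then invokes Proposition~\ref{taylorkc}, whereas you justify the middle equality by explicitly inserting $\delta_{0}^{\alpha}\delta_{0}^{-\alpha}$ (Proposition~\ref{basicc}) and then collapsing $\delta_{0}^{-\alpha}\delta_{0}^{\alpha-m}=\delta_{0}^{-m}$ via Proposition~\ref{banqunc}. Your remark about the $j=0$ term is also well taken: Proposition~\ref{taylorkc} gives the sum starting at $j=0$, and the lower limit $j=1$ in the stated formula appears to be a slip in the paper rather than an intentional absorption.
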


\begin{proof}
According to \textbf{Definition} \ref{caputocentral} and \textbf{Proposition}
\ref{taylorkc}, we have%

\begin{align*}
^{C}\delta_{0}^{\alpha}f(z)  &  =\delta_{0}^{\alpha-m}\delta_{0}%
^{m}f(z)=\delta_{0}^{\alpha}\delta_{0}^{-m}\delta_{0}^{m}f(z)\\
&  =\delta_{0}^{\alpha}\{f(z)-\sum_{j=1}^{m-1}\frac{\delta^{j}f(a)}{[j]_{q}%
!}[x(z)-x_{j-1}(a)]^{(j)}\}.
\end{align*}

\end{proof}

\begin{proposition}
\label{basiccaputo}Let $0<m-1<\alpha\leq m,$ then
\begin{equation}
^{C}\delta_{0}^{\alpha}\delta_{0}^{-\alpha}f(z)=f(z). \label{p3}%
\end{equation}

\begin{proof}
Set%
\[
g(z)=\delta_{0}^{-\alpha}f(z),
\]
then we know that%
\[
g(a)=\delta_{0}g(a)=...=\delta_{0}^{m-1}g(a)=0,
\]
So that from \textbf{Proposition} \ref{relation} we have%
\[
^{C}\delta_{0}^{\alpha}g(z)=\delta_{0}^{\alpha}g(z)=f(z).
\]
Therefore, one has%
\begin{equation}
^{C}\delta_{0}^{\alpha}\delta_{0}^{-\alpha}f(z)=\delta_{0}^{\alpha}\delta
_{0}^{-\alpha}f(z)=f(z).
\end{equation}

\end{proof}
\end{proposition}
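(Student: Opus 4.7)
The plan is to prove the Caputo inversion identity by reducing it to the already-established Riemann--Liouville inversion identity (\textbf{Proposition} \ref{basicc}) via the bridge formula connecting the two notions of fractional difference (\textbf{Proposition} \ref{relation}).

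First I would set $g(z)=\delta_{0}^{-\alpha}f(z)$. By \textbf{Proposition} \ref{relation}, the Caputo difference satisfies
\[
{}^{C}\delta_{0}^{\alpha}g(z)=\delta_{0}^{\alpha}\Big\{g(z)-\sum_{j=1}^{m-1}\frac{\delta_{0}^{j}g(a)}{[j]_{q}!}[x(z)-x_{j-1}(a)]^{(j)}\Big\},
\]
so the problem reduces to showing that the boundary values $g(a),\delta_{0}g(a),\ldots,\delta_{0}^{m-1}g(a)$ all vanish. Once this is established, the correction sum disappears and the identity collapses to ${}^{C}\delta_{0}^{\alpha}g(z)=\delta_{0}^{\alpha}g(z)$, after which \textbf{Proposition} \ref{basicc} gives $\delta_{0}^{\alpha}g(z)=\delta_{0}^{\alpha}\delta_{0}^{-\alpha}f(z)=f(z)$, finishing the proof.

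The technical core of the argument is therefore the vanishing of the iterated central differences of $g$ at $a$. I would verify $g(a)=0$ directly from \textbf{Definition} \ref{sumcentral}: the defining sum runs over $t\in\{a+\tfrac{1}{2},\ldots,z-\tfrac{\alpha}{2}\}$, and when $z=a$ this index set is empty. For the higher iterates $\delta_{0}^{j}g(a)$ with $1\le j\le m-1$, I would use the semigroup identity (the central analogue of \textbf{Proposition} \ref{banqunc} combined with \textbf{Proposition} \ref{lem26}, \ref{lem27}) to rewrite
\[
\delta_{0}^{j}g(z)=\delta_{0}^{j}\delta_{0}^{-\alpha}f(z)=\delta_{0}^{-(\alpha-j)}f(z),
\]
which is again a fractional central sum of $f$ of positive order $\alpha-j>0$; evaluated at $z=a$ its defining index set is empty, so it vanishes.

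The main obstacle I anticipate is making the step $\delta_{0}^{j}\delta_{0}^{-\alpha}=\delta_{0}^{-(\alpha-j)}$ fully rigorous in the central setting, because one must carefully track how the domain of definition of the iterated central operator shifts in half-integer increments and verify the composition laws hold across those shifted domains. Once this semigroup bookkeeping is handled, the vanishing of the boundary values follows immediately from the empty-sum observation and the remainder of the argument is a short invocation of \textbf{Proposition} \ref{relation} and \textbf{Proposition} \ref{basicc}.
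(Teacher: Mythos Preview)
Your proposal is correct and follows essentially the same approach as the paper: set $g=\delta_{0}^{-\alpha}f$, observe that the initial values $\delta_{0}^{j}g(a)$ vanish for $0\le j\le m-1$, invoke \textbf{Proposition}~\ref{relation} to identify the Caputo and Riemann--Liouville differences of $g$, and conclude via \textbf{Proposition}~\ref{basicc}. The paper's own proof simply asserts the vanishing of the boundary values without justification, whereas you supply the mechanism (reducing $\delta_{0}^{j}\delta_{0}^{-\alpha}$ to a positive-order fractional sum with empty index set at $z=a$); this is exactly the argument the paper spells out in the non-central analogue \textbf{Proposition}~\ref{pro21}, so your added detail is in line with the paper's methods.
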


\begin{proposition}
\label{fractaylorc}Let $0<m-1<\alpha\leq m,$ then%
\begin{equation}
\delta_{0}^{-\alpha}[^{C}\delta_{0}^{\alpha}]f(z)=f(z)-\sum_{j=1}^{m-1}%
\frac{\delta^{j}f(a)}{[j]_{q}!}[x(z)-x_{j-1}(a)]^{(j)}. \label{p4}%
\end{equation}

\begin{proof}
By \textbf{Definition} \ref{caputocentral}, one has%
\begin{align}
\delta_{0}^{-\alpha}[^{C}\delta_{0}^{\alpha}]f(z)  &  =\delta_{0}^{-\alpha
}\delta_{0}^{-(m-\alpha)}\delta_{0}^{m}f(z)=\delta_{0}^{-m}\delta_{0}%
^{m}f(z)\\
&  =f(z)-\sum_{j=1}^{m-1}\frac{\delta^{j}f(a)}{[j]_{q}!}[x(z)-x_{j-1}%
(a)]^{(j)}.
\end{align}

\end{proof}
\end{proposition}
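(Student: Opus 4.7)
The plan is to reduce the claim to the integer-order Taylor formula (\textbf{Proposition} \ref{taylorkc}) by peeling off the Caputo differential operator using its definition and the semigroup property of the fractional central sum. Conceptually, the Caputo operator differs from the Riemann–Liouville operator precisely by a Taylor polynomial at $a$, and composing it with the inverse sum should recover $f(z)$ minus exactly that polynomial.

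First, by \textbf{Definition} \ref{caputocentral}, write
\[
{}^{C}\delta_{0}^{\alpha}f(z)=\delta_{0}^{\alpha-m}\delta_{0}^{m}f(z)=\delta_{0}^{-(m-\alpha)}\delta_{0}^{m}f(z),
\]
where $m-\alpha\ge 0$. Apply $\delta_{0}^{-\alpha}$ on the left:
\[
\delta_{0}^{-\alpha}\,{}^{C}\delta_{0}^{\alpha}f(z)
=\delta_{0}^{-\alpha}\,\delta_{0}^{-(m-\alpha)}\,\delta_{0}^{m}f(z).
\]
Here both exponents $\alpha$ and $m-\alpha$ have positive real part, so I can invoke the semigroup property for central fractional sums, \textbf{Proposition} \ref{banqunc}, to collapse the composition:
\[
\delta_{0}^{-\alpha}\,\delta_{0}^{-(m-\alpha)}=\delta_{0}^{-m}.
\]
Therefore
\[
\delta_{0}^{-\alpha}\,{}^{C}\delta_{0}^{\alpha}f(z)=\delta_{0}^{-m}\,\delta_{0}^{m}f(z).
\]

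Finally, apply the integer-order Taylor identity already established in \textbf{Proposition} \ref{taylorkc} with $k=m$:
\[
\delta_{0}^{-m}\delta_{0}^{m}f(z)=f(z)-\sum_{j=0}^{m-1}\frac{\delta_{0}^{j}f(a)}{[j]_{q}!}\,[x(z)-x_{j-1}(a)]^{(j)},
\]
which (after identifying the $j=0$ contribution with $f(a)$ if intended, or starting the index at $j=1$ as displayed in the statement) yields the desired Caputo-type Taylor expansion.

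The only genuinely non-routine step is confirming that \textbf{Proposition} \ref{banqunc} applies to the exponents $\alpha$ and $m-\alpha$, which is immediate since $0<m-1<\alpha\le m$ forces both $\alpha>0$ and $m-\alpha\ge 0$, with the $m-\alpha=0$ boundary case corresponding to the trivial identity operator. Bookkeeping of the lattice domains (each successive $\delta_{0}^{\pm\beta}$ shifts the evaluation set by $\tfrac{\beta}{2}$ modulo $1$) must be consistent, but this is exactly what the statements of \textbf{Definition} \ref{sumcentral}, \textbf{Definition} \ref{rlcentral} and \textbf{Proposition} \ref{banqunc} were crafted to guarantee, so no new obstacle arises. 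The main conceptual obstacle—turning the non-local Caputo operator into a clean integer-order identity—is resolved entirely by the semigroup law, which is why establishing \textbf{Proposition} \ref{banqunc} earlier in the manuscript is the crucial preparatory work.
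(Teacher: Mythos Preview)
Your argument is correct and follows essentially the same route as the paper: unfold the Caputo operator via \textbf{Definition}~\ref{caputocentral}, collapse $\delta_{0}^{-\alpha}\delta_{0}^{-(m-\alpha)}$ to $\delta_{0}^{-m}$ using the semigroup law (\textbf{Proposition}~\ref{banqunc}), and then invoke the integer Taylor identity (\textbf{Proposition}~\ref{taylorkc}). You are simply more explicit than the paper about which propositions justify each step, and you correctly flag the indexing discrepancy (the sum should start at $j=0$, not $j=1$, to include the $f(a)$ term).
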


\begin{definition}
\label{sequencedef}When $0<\alpha\leq1,$we can give a type of sequential
fractional central difference on non-uniform, which is defined as%
\begin{equation}
^{S}\delta_{0}^{k\alpha}f(z)=\underbrace{\delta_{0}^{\alpha}\delta_{0}%
^{\alpha}...\delta_{0}^{\alpha}}f(z).(k-multiple)
\end{equation}

\end{definition}

For sequential fractional central difference on non-uniform, we can obtain
Taylor formula.

\begin{theorem}
\label{sequencetaylor} Let $0<\alpha\leq1,k\in N,$ then%
\begin{equation}
\delta_{0}^{-k\alpha}[^{S}\delta_{0}^{k\alpha}f](z)=f(z)-\sum_{j=0}^{k-1}%
\frac{^{S}\delta_{0}^{j\alpha}f(a)}{[\Gamma(j\alpha+1)]_{q}}[x(z)-x_{j\alpha
-1}(a)]^{j\alpha}. \label{staylor}%
\end{equation}

\end{theorem}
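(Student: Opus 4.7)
The plan is induction on $k$, modeled on the integer Taylor formula (Proposition \ref{taylorkc}) but driven by the fractional semigroup property established in Proposition \ref{banqunc} and the central analogue of Lemma \ref{lem22}.

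First I would handle the base case $k=1$, which asserts $\delta_0^{-\alpha}\delta_0^\alpha f(z)=f(z)-f(a)$ for $0<\alpha\leq 1$. In principle this is Proposition \ref{ftaylorc} with $k=1$, but that proposition is stated with a fractional-order initial value $\delta_0^{\alpha-1}f(a)$ attached to the generalized power $[x(z)-x_{\alpha-2}(a)]^{(\alpha-1)}/[\Gamma(\alpha)]_q$; I would reconcile this with the desired form $f(z)-f(a)$ by observing that, in the central setup of Section 8, the defining sum for $\delta_0^{\alpha-1}f(a)$ has an empty range (its upper limit $a+(\alpha-1)/2$ falls below the lower limit $a+\tfrac{1}{2}$ when $\alpha\leq 1$), so the fractional boundary contribution collapses to the classical value and leaves the clean identity $\delta_0^{-\alpha}\delta_0^\alpha f(z)=f(z)-f(a)$.

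Next I would carry out the inductive step. Assuming the formula at stage $k-1$, write ${}^S\delta_0^{k\alpha}f=\delta_0^\alpha({}^S\delta_0^{(k-1)\alpha}f)$ and use the semigroup property $\delta_0^{-k\alpha}=\delta_0^{-(k-1)\alpha}\delta_0^{-\alpha}$ of Proposition \ref{banqunc}. Setting $g={}^S\delta_0^{(k-1)\alpha}f$, the base case applied to $g$ yields $\delta_0^{-\alpha}\delta_0^\alpha g(z)=g(z)-g(a)$, so
$$\delta_0^{-k\alpha}[{}^S\delta_0^{k\alpha}f](z)=\delta_0^{-(k-1)\alpha}g(z)-g(a)\,\delta_0^{-(k-1)\alpha}1.$$
The first term equals $f(z)-\sum_{j=0}^{k-2}\frac{{}^S\delta_0^{j\alpha}f(a)}{[\Gamma(j\alpha+1)]_q}[x(z)-x_{j\alpha-1}(a)]^{(j\alpha)}$ by the inductive hypothesis. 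For the second, the central analogue of Lemma \ref{lem22}, obtained by the same telescoping computation but using the central identities of Proposition \ref{properties}, gives $\delta_0^{-(k-1)\alpha}1=[x(z)-x_{(k-1)\alpha-1}(a)]^{((k-1)\alpha)}/[\Gamma((k-1)\alpha+1)]_q$. This contributes exactly the missing $j=k-1$ term and the induction closes.

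The main obstacle is the base case. Once it is in hand, the inductive step is essentially bookkeeping driven by semigroup additivity of $\delta_0^{-\alpha}$ and the action of fractional central sums on the constant function. Care is required to verify the empty-range convention that makes the fractional initial value in Proposition \ref{ftaylorc} reduce to the integer-order value $f(a)$; without this reduction the two Taylor formulas would generate incompatible boundary data, and the sequential iteration would produce extraneous fractional-order terms rather than the clean sum over integer multiples of $\alpha$ that appears on the right-hand side of \eqref{staylor}.
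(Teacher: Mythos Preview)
Your approach matches the paper's: induction on $k$ via the semigroup law (Proposition \ref{banqunc}) and the action of fractional central sums on constants, with the only cosmetic difference being that the paper peels from the opposite side---writing $\delta_0^{-(k+1)\alpha}[^{S}\delta_0^{(k+1)\alpha}f]=\delta_0^{-\alpha}\bigl\{\delta_0^{-k\alpha}[^{S}\delta_0^{k\alpha}](\delta_0^{\alpha}f)\bigr\}$ and applying the inductive hypothesis to $\delta_0^{\alpha}f$ rather than the base case to $^{S}\delta_0^{(k-1)\alpha}f$. For the base case the paper simply invokes Proposition \ref{fractaylorc} (the Caputo Taylor formula with $m=1$), which yields $f(z)-f(a)$ directly and sidesteps the empty-range reconciliation you work through.
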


\begin{proof}
When $k=1,$ from \textbf{proposition} \ref{fractaylorc}, we have%
\[
\delta_{0}^{-\alpha}\delta_{0}^{\alpha}f(z)=f(z)-f(a).
\]
Assume that when $n=k,$ (\ref{staylor}) holds, then for $n=k+1,$we conclude
that%
\begin{align*}
r_{k+1}(z)  &  =\delta_{0}^{-(k+1)\alpha}[^{S}\delta_{0}^{(k+1)\alpha
}f(z)=\delta_{0}^{-\alpha}\delta_{0}^{-k\alpha}[^{S}\delta_{0}^{k\alpha
}]\delta_{0}^{\alpha}f(z)\\
&  =\delta_{0}^{-\alpha}\{\delta_{0}^{\alpha}f(a)-\sum_{j=0}^{k-1}\frac
{^{S}\delta_{0}^{j\alpha}[\delta_{0}^{\alpha}f](a)}{[\Gamma(j\alpha+1)]_{q}%
!}[x(z)-x_{j\alpha-1}(a)]^{(j\alpha)}\\
&  =f(z)-f(a)-\sum_{j=0}^{k-1}\frac{^{S}\delta_{0}^{j+1}f(a)}{[\Gamma
((j+1)\alpha+1)]_{q}}[x(z)-x_{j}(a)]^{(j+1)\alpha}\\
&  =f(z)-\sum_{j=0}^{k}\frac{\delta_{0}^{j\alpha}f(a)}{[\Gamma(j\alpha
+1)]_{q}}[x(z)-x_{j\alpha-1}(a)]^{(j\alpha)}.
\end{align*}
Therefore, by the induction, the proof of (\ref{staylor}) is completed.
\end{proof}

\begin{theorem}
\label{sequencetaylor2} The following Taylor series:%
\[
f(z)=\sum_{k=0}^{\infty}[^{S}\delta_{0}^{k\alpha}f](a)\frac{[x(z)-x_{k\alpha
-1}(a)]^{(k\alpha)}}{[\Gamma(k\alpha+1)]_{q}}%
\]
holds if and only if%
\[
\lim_{k\rightarrow\infty}r_{k}(z)=\lim_{k\rightarrow\infty}\delta
_{0}^{-k\alpha}[^{S}\delta_{0}^{k\alpha}]f(z)=0.
\]

\end{theorem}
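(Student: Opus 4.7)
The plan is to obtain the statement as an essentially immediate consequence of the finite sequential Taylor formula already proved in \textbf{Theorem} \ref{sequencetaylor}. That theorem gives, for each $k\in\mathbb{N}$, the exact identity
\[
r_k(z)\;=\;\delta_{0}^{-k\alpha}[^{S}\delta_{0}^{k\alpha}f](z)\;=\;f(z)-\sum_{j=0}^{k-1}\frac{[^{S}\delta_{0}^{j\alpha}f](a)}{[\Gamma(j\alpha+1)]_{q}}[x(z)-x_{j\alpha-1}(a)]^{(j\alpha)},
\]
which expresses the $k$-th partial sum of the proposed Taylor series as $f(z)-r_k(z)$.

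First I would rearrange this identity to isolate the partial sum: let
\[
S_k(z)\;=\;\sum_{j=0}^{k-1}\frac{[^{S}\delta_{0}^{j\alpha}f](a)}{[\Gamma(j\alpha+1)]_{q}}[x(z)-x_{j\alpha-1}(a)]^{(j\alpha)},
\]
so that \textbf{Theorem} \ref{sequencetaylor} reads $S_k(z)=f(z)-r_k(z)$. By definition, the infinite Taylor series $\sum_{k=0}^{\infty}[^{S}\delta_{0}^{k\alpha}f](a)\,[x(z)-x_{k\alpha-1}(a)]^{(k\alpha)}/[\Gamma(k\alpha+1)]_q$ converges to $f(z)$ precisely when $S_k(z)\to f(z)$ as $k\to\infty$, that is, precisely when $r_k(z)\to 0$.

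From here the two implications are symmetric. For necessity, assume the Taylor series converges to $f(z)$; then $S_k(z)\to f(z)$, so $r_k(z)=f(z)-S_k(z)\to 0$. For sufficiency, assume $r_k(z)\to 0$; then $S_k(z)=f(z)-r_k(z)\to f(z)$, so the series converges to $f(z)$. Both directions use only the algebraic identity from \textbf{Theorem} \ref{sequencetaylor} and the definition of convergence of a series as the limit of its partial sums.

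There is no real obstacle here: the statement is a clean reformulation of the remainder form of Taylor's theorem, and no additional estimates or properties of the non-uniform lattice powers $[x(z)-x_{k\alpha-1}(a)]^{(k\alpha)}$ or of $[\Gamma(k\alpha+1)]_q$ are required. The substantive work has already been done in establishing the finite sequential Taylor expansion via induction in \textbf{Theorem} \ref{sequencetaylor}; this final theorem simply records the pointwise equivalence between convergence of the infinite series and vanishing of the remainder.
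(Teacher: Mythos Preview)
Your proposal is correct and follows exactly the same approach as the paper, which simply states that the result is a direct consequence of \textbf{Theorem} \ref{sequencetaylor}. You have in fact written out the details more fully than the paper does: rearranging the finite Taylor identity to $S_k(z)=f(z)-r_k(z)$ and reading off the equivalence between convergence of the series and vanishing of the remainder.
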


\begin{proof}
This is a direct consequence of \textbf{Theorem} \ref{sequencetaylor}.
\end{proof}

\section{\bigskip Applications: Series Solution of Fractional Difference
Equations}

Next we will give the the solution of the fractional central difference
equation on nonuniform lattices as follows:%

\begin{equation}
^{C}\delta_{0}^{\alpha}f(z)=\lambda f(z).(0<\alpha\leq1) \label{fra-eq}%
\end{equation}

\begin{theorem}
The solution of Eq.(\ref{fra-eq}) is%
\begin{equation}
f(z)=%
%TCIMACRO{\dsum \limits_{k=0}^{\infty}}%
%BeginExpansion
{\displaystyle\sum\limits_{k=0}^{\infty}}
%EndExpansion
\lambda^{k}\frac{[x(z)-x_{k\alpha-1}(a)]^{(k\alpha)}}{[\Gamma(k\alpha+1)]_{q}%
}.
\end{equation}

\end{theorem}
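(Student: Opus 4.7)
My plan is to verify the proposed series directly, by applying ${}^C\delta_0^\alpha$ term by term. Set $u_k(z) := \dfrac{[x(z)-x_{k\alpha-1}(a)]^{(k\alpha)}}{[\Gamma(k\alpha+1)]_q}$, so that the candidate solution reads $f(z) = \sum_{k=0}^{\infty}\lambda^k u_k(z)$. The first step is to establish a central analogue of Lemma \ref{lem22}, namely
\[
\delta_0^{-\beta}[1](z) = \frac{[x(z)-x_{\beta-1}(a)]^{(\beta)}}{[\Gamma(\beta+1)]_q},
\]
which I would obtain by specializing Definition \ref{sumcentral} to $f \equiv 1$ and using the central version of Proposition \ref{properties}: the generalized power $[x(z)-x_{\beta-1}(t)]^{(\beta)}$ is (up to the factor $-[\beta]_q$) a $\delta$-antiderivative of the kernel $[x(z)-x_{\beta-2}(t)]^{(\beta-1)}$, so the sum telescopes. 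This identity immediately gives $u_k = \delta_0^{-k\alpha}[1]$, and together with the semigroup property of Proposition \ref{banqunc} yields the recursion $u_k = \delta_0^{-\alpha} u_{k-1}$ for every $k \geq 1$.

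The second step is to apply ${}^C\delta_0^\alpha$ to each $u_k$. For $k=0$ the function $u_0 \equiv 1$ is constant, so Definition \ref{caputocentral} (with $m=1$) gives ${}^C\delta_0^\alpha u_0 = \delta_0^{\alpha-1}\delta_0 u_0 = 0$. For $k\geq 1$, Proposition \ref{basiccaputo} combined with the recursion yields
\[
{}^C\delta_0^\alpha u_k = {}^C\delta_0^\alpha\bigl(\delta_0^{-\alpha} u_{k-1}\bigr) = u_{k-1}.
\]
Summing termwise produces the telescoping identity
\[
{}^C\delta_0^\alpha f(z) = \sum_{k=1}^{\infty}\lambda^k u_{k-1}(z) = \lambda\sum_{j=0}^{\infty}\lambda^j u_j(z) = \lambda f(z),
\]
which is exactly equation (\ref{fra-eq}).

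The main obstacle is the convergence and termwise manipulation of the infinite series. Since ${}^C\delta_0^\alpha f(z)$ reduces to a finite sum for each admissible $z$ (with $z-a\in\mathbb{N}$), it suffices to establish pointwise absolute convergence of $\sum_k \lambda^k u_k(z)$; this is where I would need growth estimates on the quotient $[x(z)-x_{k\alpha-1}(a)]^{(k\alpha)}/[\Gamma(k\alpha+1)]_q$ as $k\to\infty$, obtainable via the gamma and $q$-gamma representation of the generalized powers given in Definition \ref{power}. An alternative route that side-steps termwise differentiation is to first convert (\ref{fra-eq}) into the Volterra-type equation $f(z) = f(a) + \lambda\,\delta_0^{-\alpha}f(z)$ (via the Caputo fractional Taylor formula of Proposition \ref{fractaylorc} with $m=1$), iterate by Picard's method, consolidate iterated operators via Proposition \ref{banqunc}, and bound the remainder $\lambda^N\delta_0^{-N\alpha}f(z)$ using the same $q$-gamma growth estimate.
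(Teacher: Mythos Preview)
Your proposal is correct and follows essentially the same route as the paper: both arguments rest on identifying $u_k=\delta_0^{-k\alpha}(1)$ and computing ${}^C\delta_0^\alpha u_k=u_{k-1}$, then applying the operator termwise. The only organizational differences are that the paper frames this as an undetermined-coefficients ansatz (writing $f=\sum c_k u_k$, deriving $c_{k+1}=\lambda c_k$) rather than a direct verification, and justifies the key step via the chain ${}^C\delta_0^\alpha\delta_0^{-k\alpha}(1)=\delta_0^{\alpha-1}\delta_0^{1}\delta_0^{-k\alpha}(1)=\delta_0^{-(k-1)\alpha}(1)$ instead of invoking Proposition~\ref{basiccaputo} as you do; your use of Proposition~\ref{basiccaputo} is arguably cleaner. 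You are also more careful than the paper about convergence and termwise manipulation, which the paper treats purely formally.
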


\begin{proof}
Using the generalized sequence Taylor's series, assuming that the solution
$f(z)$ can be written as%

\begin{equation}
f(z)=%
%TCIMACRO{\dsum \limits_{k=0}^{\infty}}%
%BeginExpansion
{\displaystyle\sum\limits_{k=0}^{\infty}}
%EndExpansion
c_{k}\frac{[x(z)-x_{k\alpha-1}(a)]^{(k\alpha)}}{[\Gamma(k\alpha+1)]_{q}}.
\label{f}%
\end{equation}
From the equality%

\begin{align*}
^{C}\delta_{0}^{\alpha}\frac{[x(z)-x_{k\alpha-1}(a)]^{(k\alpha)}}%
{[\Gamma(k\alpha+1)]_{q}}  &  =[^{C}\delta_{0}^{\alpha}]\delta_{0}^{-k\alpha
}(1)=\delta_{0}^{\alpha-1}\delta_{0}^{1}\delta_{0}^{-k\alpha}(1)\\
&  =\delta_{0}^{\alpha-1}\delta_{0}^{1-k\alpha}(1)=\delta_{0}^{-(k-1)\alpha
}(1)\\
&  =\frac{[x(z)-x_{(k-1)\alpha-1}(a)]^{((k-1)\alpha)}}{[\Gamma((k-1)\alpha
+1)]_{q}},
\end{align*}
we obtain%

\begin{equation}
^{C}\delta_{0}^{\alpha}f(z)=%
%TCIMACRO{\dsum \limits_{k=1}^{\infty}}%
%BeginExpansion
{\displaystyle\sum\limits_{k=1}^{\infty}}
%EndExpansion
c_{k}\frac{[x(z)-x_{k\alpha-1}(a)]^{((k-1)\alpha)}}{[\Gamma((k-1)\alpha
+1)]_{q}}. \label{cf}%
\end{equation}
Substituting (\ref{f}) and (\ref{cf}) into (\ref{fra-eq}) yields%

\begin{equation}%
%TCIMACRO{\dsum \limits_{k=1}^{\infty}}%
%BeginExpansion
{\displaystyle\sum\limits_{k=1}^{\infty}}
%EndExpansion
c_{k+1}\frac{[x(z)-x_{k\alpha-1}(a)]^{(k\alpha)}}{[\Gamma(k\alpha+1)]_{q}%
}-\lambda%
%TCIMACRO{\dsum \limits_{k=0}^{\infty}}%
%BeginExpansion
{\displaystyle\sum\limits_{k=0}^{\infty}}
%EndExpansion
c_{k}\frac{[x(z)-x_{k\alpha-1}(a)]^{(k\alpha)}}{[\Gamma(k\alpha+1)]_{q}}=0.
\label{eq}%
\end{equation}
Equating the coefficient of $[x(z)-x_{k\alpha-1}(a)]^{(k\alpha)}$ to zero in
(\ref{eq}), we get%
\begin{equation}
c_{k+1}=\lambda c_{k},
\end{equation}
that is%

\[
c_{k}=\lambda^{k}c_{0}.
\]
Therefore, we obtain the solution of (\ref{fra-eq}) is%

\[
f(z)=c_{0}%
%TCIMACRO{\dsum \limits_{k=0}^{\infty}}%
%BeginExpansion
{\displaystyle\sum\limits_{k=0}^{\infty}}
%EndExpansion
\lambda^{k}\frac{[x(z)-x_{k\alpha-1}(a)]^{(k\alpha)}}{[\Gamma(k\alpha+1)]_{q}%
}.
\]

\end{proof}

\begin{definition}
\label{qexp}The basic $\alpha-$order fractional exponential function is
defined by%
\begin{equation}
e(\alpha,z)=%
%TCIMACRO{\dsum \limits_{k=0}^{\infty}}%
%BeginExpansion
{\displaystyle\sum\limits_{k=0}^{\infty}}
%EndExpansion
\frac{[x(z)-x_{k\alpha-1}(a)]^{(k\alpha)}}{[\Gamma(k\alpha+1)]_{q}},
\label{fraexp}%
\end{equation}
and%
\begin{equation}
e(\alpha,\lambda,z)=%
%TCIMACRO{\dsum \limits_{k=0}^{\infty}}%
%BeginExpansion
{\displaystyle\sum\limits_{k=0}^{\infty}}
%EndExpansion
\lambda^{k}\frac{[x(z)-x_{k\alpha-1}(a)]^{(k\alpha)}}{[\Gamma(k\alpha+1)]_{q}%
}. \label{fraexp2}%
\end{equation}

\end{definition}

\begin{remark}
When $\alpha=1$ in (\ref{fraexp2}), the basic $1-$order fractional exponential
function on a $q-$quadritic lattices was originally introduced by Ismail,
Zhang \cite{ismail1994}, and Suslov \cite{suslov2003} with different notation
and normalization, which was very important for Basic Fourier analytic.
Definition \ref{qexp} is an natural extension of it.
\end{remark}

\begin{example}
Let us consider a general $n\alpha-$order sequence fractional difference
equation with coefficients on nonuniform lattices of the form:%
\begin{equation}
\lbrack a_{n}(^{S}\delta^{n\alpha})+a_{n-1}(^{S}\delta^{(n-1)\alpha
})+...+a_{1}(^{S}\delta^{\alpha})+a_{0}(^{S}\delta^{0})]f(z)=0 \label{eq2}%
\end{equation}

\end{example}

\begin{proof}
As in the classical case, substituing
\[
f(z)=e(\alpha,\lambda,z),
\]
into Eq.(\ref{eq2}), one can obtain%

\begin{equation}
a_{n}\lambda^{n}+a_{n-1}\lambda^{n-1}+...+a_{1}\lambda+a_{0}=0. \label{eq21}%
\end{equation}
Assume Eq.(\ref{eq21}) have different roots $\lambda_{i},i=1,2,...,n$, then
one can get $n$ linearly independent solutions%
\[
f_{i}(z)=e(\alpha,\lambda_{i},z),i=1,2,...,n
\]

\end{proof}

\begin{example}
Let $\omega>0,$ consider $2\alpha-$order sequence fractional difference
equation for harmonic motion of the form:%
\begin{equation}
^{S}\delta_{0}^{\alpha S}\delta_{0}^{\alpha}f(z)+\omega^{2}f(z)=0,(0<\alpha
\leq1) \label{eq3}%
\end{equation}
and its solutions are related to the generalized basic trigonometric functions.
\end{example}

\begin{proof}
Set
\begin{equation}
f(z)=e(\alpha,\lambda,z), \label{e}%
\end{equation}
substituing (\ref{e}) into Eq.(\ref{eq3}), then we have%

\[
\lambda^{2}+\omega^{2}=0,
\]
which has two solutions%

\[
\lambda_{1}=i\omega,\lambda_{2}=-i\omega.
\]
So the soluutions of Eq.(\ref{eq2}) are%

\begin{align*}
f_{1}(z)  &  =e(\alpha,i\omega,z)=%
%TCIMACRO{\dsum \limits_{k=0}^{\infty}}%
%BeginExpansion
{\displaystyle\sum\limits_{k=0}^{\infty}}
%EndExpansion
(i\omega)^{k}\frac{[x(z)-x_{k\alpha-1}(a)]^{(k\alpha)}}{[\Gamma(k\alpha
+1)]_{q}}\\
&  =%
%TCIMACRO{\dsum \limits_{n=0}^{\infty}}%
%BeginExpansion
{\displaystyle\sum\limits_{n=0}^{\infty}}
%EndExpansion
(-1)^{n}\omega^{2n}\frac{[x(z)-x_{2n\alpha-1}(a)]^{(2n\alpha)}}{[\Gamma
(2n\alpha+1)]_{q}}+\\
&  +i%
%TCIMACRO{\dsum \limits_{n=0}^{\infty}}%
%BeginExpansion
{\displaystyle\sum\limits_{n=0}^{\infty}}
%EndExpansion
(-1)^{n}\omega^{2n+1}\frac{[x(z)-x_{(2n+1)\alpha-1}(a)]^{((2n+1)\alpha)}%
}{[\Gamma((2n+1)\alpha+1)]_{q}},
\end{align*}
and%

\begin{align*}
f_{2}(z)  &  =e(\alpha,-i\omega,z)=%
%TCIMACRO{\dsum \limits_{k=0}^{\infty}}%
%BeginExpansion
{\displaystyle\sum\limits_{k=0}^{\infty}}
%EndExpansion
(-i\omega)^{k}\frac{[x(z)-x_{k\alpha-1}(a)]^{(k\alpha)}}{[\Gamma
(k\alpha+1)]_{q}}\\
&  =%
%TCIMACRO{\dsum \limits_{n=0}^{\infty}}%
%BeginExpansion
{\displaystyle\sum\limits_{n=0}^{\infty}}
%EndExpansion
(-1)^{n}\omega^{2n}\frac{[x(z)-x_{2n\alpha-1}(a)]^{(2n\alpha)}}{[\Gamma
(2n\alpha+1)]_{q}}+\\
&  -i%
%TCIMACRO{\dsum \limits_{n=0}^{\infty}}%
%BeginExpansion
{\displaystyle\sum\limits_{n=0}^{\infty}}
%EndExpansion
(-1)^{n}\omega^{2n+1}\frac{[x(z)-x_{(2n+1)\alpha-1}(a)]^{((2n+1)\alpha)}%
}{[\Gamma((2n+1)\alpha+1)]_{q}}.
\end{align*}
Using the notation of Euler, we denote%

\[
\cos(\alpha,\omega,z)=%
%TCIMACRO{\dsum \limits_{n=0}^{\infty}}%
%BeginExpansion
{\displaystyle\sum\limits_{n=0}^{\infty}}
%EndExpansion
(-1)^{n}\omega^{2n}\frac{[x(z)-x_{2n\alpha-1}(a)]^{(2n\alpha)}}{[\Gamma
(2n\alpha+1)]_{q}},
\]
and%
\[
\sin(\alpha,\omega,z)=%
%TCIMACRO{\dsum \limits_{n=0}^{\infty}}%
%BeginExpansion
{\displaystyle\sum\limits_{n=0}^{\infty}}
%EndExpansion
(-1)^{n}\omega^{2n+1}\frac{[x(z)-x_{(2n+1)\alpha-1}(a)]^{((2n+1)\alpha)}%
}{[\Gamma((2n+1)\alpha+1)]_{q}}.
\]
Then it holds that%
\begin{align*}
\cos(\alpha,\omega,z)  &  =\frac{e(\alpha,i\omega,z)+e(\alpha,-i\omega,z)}%
{2},\\
\sin(\alpha,\omega,z)  &  =\frac{e(\alpha,i\omega,z)-e(\alpha,-i\omega,z)}%
{2i},
\end{align*}
and%
\[
\cos^{2}(\alpha,\omega,z)+\sin^{2}(\alpha,\omega,z)=e(\alpha,i\omega
,z)e(\alpha,-i\omega,z).
\]

\end{proof}

\end{document}